\newcommand{\ds}{\displaystyle}
\newcommand{\lt}{\left}
\newcommand{\rt}{\right}
\def\CC{{\mathbb C}}
\def\RR{{\mathbb R}}
\def\ZZ{{\mathbb Z}}
\def\NN{{\mathbb N}}
\providecommand{\arccosh}{\mathop{\rm arccosh}\nolimits}
\providecommand{\Vol}{\mathop{\rm Vol}\nolimits}
\providecommand{\CS}{\mathop{\rm CS}\nolimits}
\providecommand{\im}{\mathop{\rm Im}\nolimits}
\def \Re{{\operatorname{Re}}}
\newtheorem{conjecture}{Conjecture}
\newtheorem{theorem}{Theorem}
\newtheorem{lemma}{Lemma}
\newtheorem{proposition}{Proposition}
\newtheorem{remark}{Remark}
\title{Asymptotic Behavior of the Colored Jones polynomials and Turaev-Viro Invariants of the figure eight knot}
\author{Ka Ho WONG\footnote{The first author is partially supported by the Mathematics department of the Chinese University of Hong Kong.} , Thomas Kwok-Keung AU}
\date{ }
\begin{document}

\maketitle

\begin{abstract}
In this paper we investigate the asymptotic behavior of the colored Jones polynomials and the Turaev-Viro invariants for the figure eight knot. More precisely, we consider the $M$-th colored Jones polynomials evaluated at $(N+1/2)$-th root of unity with a fixed limiting ratio, $s$, of $M$ and $(N+1/2)$. We find out the asymptotic expansion formula (AEF) of the colored Jones polynomials of the figure eight knot with $s$ close to $1$. Nonetheless, we show that the exponential growth rate of the colored Jones polynomials of the figure eight knot with $s$ close to $1/2$ is strictly less than those with $s$ close to $1$. It is known that the Turaev Viro invariant of the figure eight knot can be expressed in terms of a sum of its colored Jones polynomials. Our results show that this sum is asymptotically equal to the sum of the terms with $s$ close to 1. As an application of the asymptotic behavior of the colored Jones polynomials, we obtain the asymptotic expansion formula for the Turaev-Viro invariants of the figure eight knot. Finally, we suggest a possible generalization of our approach so as to relate the AEF for the colored Jones polynomials and the AEF for the Turaev-Viro invariants for general hyperbolic knots.
\end{abstract}

\section{Introduction}

This paper aims to find out the asymptotic expansion formula (AEF) for the $M$-th colored Jones polynomials of the figure eight knot at $(M+a)$-th root of unity, with $a$ and $M$ satisfying some limiting relation. The method is motivated by the work in \cite{WA17} in which an asymptotic expansion of an $SU(n)$-invariant of the figure eight knot is given. In particular, we are interested in the case where $a = (N-M+1/2)$ with $N>M$, where $M \in \NN$ is a sequence of integers in $N$ with limiting ratio $\ds s=\lim_{N \to \infty}\frac{M}{N+1/2}$ close to $1/2$ or $1$. From the AEF of the colored Jones polynomials of the figure eight knot, we find out explicitly the large $r$ behavior of the Turaev-Viro invariants $TV_r(\mathbb{S}^3 \backslash 4_1)$. This tells us what kinds of topological information can be extracted from the AEF of the TV invariant.

\subsection{Overview of the volume conjecture}

The main theme of this paper is to establish the AEF for the Turaev-Viro invariants of the figure eight knot complement. The study of the volume conjecture of the Turaev-Viro invariants started from \cite{CY15}, in which Q.Chen and T.Yang discovered a version of volume conjecture of Turaev-Viro invariants at a $2r$-th root of unity where $r$ is an odd integer. The conjecture can be stated as follows.

\begin{conjecture}\label{cjtv}
For every hyperbolic 3-manifold M, we have
\[\lim_{r\to \infty}\frac{2\pi}{r}\log \left(TV_{r}(M,e^{\frac{2\pi i}{r}})\right) = \Vol(M)\]
where $r$ is an odd positive integer.
\end{conjecture}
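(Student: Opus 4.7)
The plan is to reduce Conjecture \ref{cjtv} to a multidimensional steepest-descent analysis of an explicit state sum and then to identify the dominant saddle point with the complete hyperbolic structure. First, I would fix an ideal geometric triangulation $\tau$ of $M$ with tetrahedra $T_1,\ldots,T_n$ and write the Turaev--Viro state sum
\[ TV_r\!\left(M,e^{2\pi i/r}\right) \;=\; \sum_{\lambda} \prod_{e} |e|_\lambda \prod_{k=1}^n \{6j\}_{\lambda, T_k} \]
over $r$-admissible edge colorings $\lambda$. The Costantino--Murakami--Yokota asymptotic formula for quantum $6j$-symbols states that, once the rescaled angles $\theta_e=2\pi\lambda_e/r$ lie in the admissible region corresponding to hyperbolic tetrahedron shapes, one has
\[ \{6j\}_{\lambda, T_k} \;\sim\; \exp\!\Bigl(\tfrac{r}{2\pi}\, V(T_k;\theta)+O(\log r)\Bigr), \]
where $V(T_k;\theta)$ is the complexified hyperbolic volume of the tetrahedron with prescribed dihedral angles $\theta_e$.

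Second, I would approximate the sum by a Riemann integral over the continuous variables $x_e=\lambda_e/r\in[0,1]$ and apply stationary phase. The total potential $W_\tau(x)=\sum_k V(T_k;2\pi x)$ has critical point equations that become, upon exponentiation, the Thurston gluing equations for $\tau$; so when $\tau$ is positively oriented, $W_\tau$ admits a critical point $x^\ast$ with $W_\tau(x^\ast)=i\bigl(\Vol(M)+i\,\CS(M)\bigr)$, and taking $\Re$ of $(2\pi/r)\log$ at $x^\ast$ reproduces the target $\Vol(M)$. For the figure-eight knot complement treated in the body of this paper, the identity expressing $TV_r$ as a sum of squared colored Jones polynomials collapses the full state sum to a one-variable sum that can be analysed directly, bypassing the $n$-dimensional saddle-point problem.

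Third, one must prove that $x^\ast$ is the globally dominant saddle: contributions from other critical points of $W_\tau$, from boundary saddles where tetrahedra degenerate to flat, and from colorings landing in the spherical rather than the hyperbolic regime of the $6j$-symbol must all be shown to have strictly smaller exponential growth. Equivalently, the goal is a uniform upper bound $|TV_r(M)|\le C_M\, r^{N_M}\exp\bigl(r\,\Vol(M)/(2\pi)\bigr)$, with the matching lower bound provided by the steepest-descent expansion at $x^\ast$.

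The main obstacle is precisely this upper bound. Without it one cannot exclude massive cancellations among the exponentially many summands producing a rate larger than $\Vol(M)/(2\pi)$. A natural intermediate target is the inequality $\limsup_{r\to\infty}(2\pi/r)\log TV_r(M)\le \Vol(M)$, potentially approachable by bounding each $6j$-symbol termwise by the maximal ideal-tetrahedron volume $v_3$ and summing a Gromov-norm-type estimate over $\tau$. For individual manifolds built from a controlled number of regular or near-regular ideal tetrahedra (such as the figure-eight complement, its sister, the Whitehead link complement, and various Dehn fillings thereof) the scheme is tractable, but a proof covering \emph{every} hyperbolic 3-manifold requires uniform control over the subleading asymptotics for arbitrary geometric triangulations, which is what keeps Conjecture \ref{cjtv} open at present.
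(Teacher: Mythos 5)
What you have written is not a proof, and it cannot be checked against one, because the statement you were given is Conjecture~\ref{cjtv} itself: the Chen--Yang volume conjecture, which the paper states as an open problem and does not prove for general hyperbolic $M$. Your own text concedes this at the end --- the crucial upper bound $\limsup_{r\to\infty}\frac{2\pi}{r}\log TV_r(M)\le \Vol(M)$ and the dominance of the geometric saddle over all other critical points, boundary/degenerate colorings, and possible cancellations among exponentially many terms are exactly the missing ingredients, and you do not supply them. Asserting that the critical point equations of the total $6j$-potential ``become the Thurston gluing equations'' also requires a positively oriented geometric triangulation and a careful identification of the correct branch of the complexified volume function, neither of which is established here. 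So as a proof of the stated conjecture the proposal has a genuine gap; as a research outline it is a reasonable summary of the known state-sum strategy.

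It is also worth noting that for the one case the paper does settle (the figure-eight knot complement, Theorem~\ref{mainthm4}), its route is different from the one you sketch. Rather than analyzing the Turaev--Viro state sum and quantum $6j$-symbols directly, the paper uses the identity of Theorem~\ref{relationship} expressing $TV_r$ as a sum of squared colored Jones polynomials $J_M$ at the $(N+\tfrac12)$-th root of unity, proves asymptotic expansion formulas for $J_M$ with limiting ratio $s=\lim M/(N+\tfrac12)$ near $1$ (Theorem~\ref{mainthm2}) and an upper bound near $\tfrac12$ (Theorem~\ref{mainthm3}) via a one-parameter family of saddle-point approximations (Theorem~\ref{FSA}), shows the $s\sim 1$ terms dominate using the Lobachevsky-function bound of Lemma~\ref{uppbd}, and then sums over $M$ by Laplace's method (Proposition~\ref{Rietoint}). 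Your proposal does mention this collapse to a one-variable sum in passing, but the substance of the paper lies precisely in carrying out that two-step (iterated one-dimensional) saddle-point analysis with uniform error control, which your sketch leaves untouched.
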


This result is surprising since according to the Witten's Asymptotic Expansion conjecture, the Reshetikhin-Turaev invariant and Turaev-Viro invariants at a $4r$-th root of unity should grow polynomially in $r$. Furthermore, in their article, Chen and Yang found numerical evidence of Conjecture 1 for the figure eight knot complement, as well as other 3-manifolds. Besides, numerical evidenece shows that $\frac{2\pi}{r}\log \left(TV_{r}(M,e^{\frac{2\pi i}{r}})\right)$ goes faster to the hyperbolic volume than $\frac{2\pi}{r}\log\left|J_{r}(K;e^{\frac{2\pi i}{r}})\right|$.

To explain the gap between the asymptotic behaviors of these invariants, we need to relate the Turaev-Viro invariants with the colored Jones polynomials so that a comparison can be done. Recall that for a link $L$, the normalized colored Jones polynomials $J_M(L;q)$ and the unnormalized colored Jones polynomials $\tilde{J}_M(L;q)$ are related by
$$ \tilde{J}_M(L;q) = \frac{q^{M/2}-q^{-M/2}}{q^{1/2}-q^{-1/2}} \times J_M(L;q)$$
The following relationship between the colored Jones polynomials and the Turaev-Viro invariants is given by Theorem 1.1 in \cite{DKY17}.
\begin{theorem}\label{relationship}
Let L be a link in $\mathbb{S}^3$ with n components. Then given an odd integer $r=2N+1 \geq 3$, we have
\[ TV_{r}\left(\mathbb{S}^3 \backslash L, e^{\frac{2\pi i}{r}}\right) = 2^{n-1}(\eta_{r}')^{2} \sum_{1\leq M \leq \frac{r-1}{2}}\left|\tilde{J}_{M}\left(L,e^{\frac{2\pi i }{N+\frac{1}{2}}}\right)\right|^2 ,\]
where $\tilde{J}_M$ is the unnormalized colored Jones polynomials for the link $L$ and $$\eta_{r}' = \frac{2\sin(\frac{2\pi}{r})}{\sqrt{r}}$$
\end{theorem}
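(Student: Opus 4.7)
The plan is to apply Roberts' theorem identifying the Turaev--Viro invariant with a Hermitian pairing of Reshetikhin--Turaev (RT) state vectors. For a compact oriented $3$-manifold $N$ with boundary, one has $TV_r(N, e^{2\pi i/r}) = \langle Z_r(N), Z_r(N)\rangle$, where $Z_r(N)$ is the RT state vector in the TQFT Hilbert space $V_r(\partial N)$. Expanded in an orthonormal basis of $V_r(\partial N)$, this becomes
\[
TV_r(N) = \sum_c |Z_r(N;c)|^2,
\]
where $c$ ranges over colorings of the boundary components by simple objects of the modular tensor category underlying the RT theory at the relevant root of unity. Roberts' proof (or equivalently the Turaev--Walker chain-mail construction) gives this identity directly from the state-sum definition of $TV_r$ applied to a triangulation of the double $N\cup_{\partial N}\overline{N}$.

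Next I would apply this to $N=\SS^3\setminus\nu(L)$, whose boundary is a disjoint union of $n$ tori. For each torus the Hilbert space is spanned by vectors indexed by admissible colors $1\le M\le r-1$ (one per simple object of the $SO(3)$-modular category used for $r$ odd). Using a surgery presentation of $\SS^3$ in which the boundary tori of $N$ are capped by solid tori with meridians colored by the $M_i$, standard RT computations identify
\[
Z_r\bigl(N;(M_1,\ldots,M_n)\bigr) \;=\; \beta_r \cdot J_{M_1,\ldots,M_n}\!\left(L; e^{2\pi i/(N+1/2)}\right),
\]
where $\beta_r$ is a normalization factor built from the modular $S$-matrix entries $S_{0,M_i}$ and the global factor $\eta_r'$ that converts the unframed $\SS^3$ normalization of the RT invariant into the standard colored Jones convention. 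Taking the squared modulus and summing will then produce a prefactor $(\eta_r')^2$ independent of $n$.

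Finally I would reduce the sum over colors using the symmetry $|J_{r-M}(L;q)| = |J_M(L;q)|$ that arises from the duality of the simple objects labeled by $M$ and $r-M$. Pairing $M_i\leftrightarrow r-M_i$ on each of the $n$ components collapses the unrestricted sum on $\{1,\ldots,r-1\}^n$ to a sum on $\{1,\ldots,(r-1)/2\}^n$ with a multiplicative factor of $2^n$; combined with the factor of $1/2$ already present in the natural $SO(3)$ TV normalization (which comes from the splitting of the $SU(2)$ modular category into even/odd parts at odd level), this yields the claimed $2^{n-1}$.

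The main obstacle will be bookkeeping: one must verify that the quantum dimensions, the $S$-matrix entries, the framing corrections from the surgery description, the splitting of $SU(2)$ into $SO(3)$, and the symmetry reduction multiply out to exactly $2^{n-1}(\eta_r')^2$ rather than a constant differing by a power of $2$ or of $\sin(2\pi/r)/\sqrt{r}$. A useful sanity check is the case $n=1$, where the formula reads $TV_r(\SS^3\setminus K) = (\eta_r')^2\sum_{M=1}^{(r-1)/2}|J_M(K)|^2$ and can be verified by a direct Kauffman-bracket skein calculation on $\SS^3$ with the knot $K$ cabled by all admissible colors; once this is pinned down, the general $n$-component case follows by the same argument applied tensorially to each boundary torus.
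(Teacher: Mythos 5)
The first thing to say is that the paper contains no proof of this statement for you to be compared against: Theorem~\ref{relationship} is imported verbatim as Theorem~1.1 of \cite{DKY17}, and the authors simply cite it. That said, your outline is essentially the strategy of the cited proof: the identity $TV_r=\|RT_r\|^2$ (Benedetti--Petronio/Roberts, extended to the $SO(3)$ setting at odd $r$ in \cite{DKY17}), expansion of the RT vector of the link exterior in the natural basis of the boundary torus spaces, identification of the coefficients with values of the colored Jones polynomials at $q=e^{2\pi i/(N+1/2)}=e^{4\pi i/r}$ up to the normalization $\eta_r'$, and the symmetry $|J_{r-M}|=|J_M|$ at this root of unity. (Incidentally, your version with multi-colorings of the $n$ components is closer to the actual statement in \cite{DKY17} than the single-color form quoted in this paper, which is only used here for the knot case $n=1$.)

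The one substantive weakness is precisely the point you defer to ``bookkeeping'': the provenance of $2^{n-1}$. Your setup is internally inconsistent --- you index the torus space by colors $1\le M\le r-1$, ``one per simple object of the $SO(3)$ category,'' but the $SO(3)$ category at odd $r$ has only $(r-1)/2$ simple objects. If you expand in the genuine $SO(3)$ basis, then since $r$ is odd each pair $\{M,r-M\}$ contains exactly one color of each parity, so the $M\leftrightarrow r-M$ folding is a \emph{bijection} onto $\{1,\dots,(r-1)/2\}$ on each component and contributes no factor $2^n$; if instead you insist on the full color set $\{1,\dots,r-1\}$ you are in the $SU(2)$-type theory and must then justify the comparison of the Chen--Yang $TV_r$ at odd $r$ with that theory, which is where the nontrivial extraction of the constant actually happens in \cite{DKY17}. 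So your proposed accounting ($2^n$ from folding times $1/2$ from the even/odd splitting) does not survive scrutiny as stated, and the exact constant $2^{n-1}(\eta_r')^2$ --- which is the entire content of the statement as it is used in this paper --- remains to be established; for the purposes of this paper the correct move is the one the authors make, namely to cite \cite{DKY17}.
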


From this relationship between the Turaev-Viro invariants and the colored Jones polynomials, Conjecture~\ref{cjtv} has been proved for the case of the figure eight knot complement (Theorem 1.6 in \cite{DKY17}). Furthermore, in order to find out the AEF of the Turaev-Viro invariants, it is natural to consider the AEF of the $M$-th colored Jones polynomials, where $M=1,2,\dots N$.

The asymptotic behavior of the colored Jones polynomials has been investigated for a very long time. The classical volume conjecture (Conjecture~\ref{cvc} below) states that the evaluation of $N$-th colored Jones polynomials of a knot $K$ at an $N$-th root of unity captures the simplicial volume of the knot complement $\mathbb{S}^{3} \backslash K$.

\begin{conjecture}\label{cvc}(Classical volume conjecture \cite{K97,MM01}) Let $K$ be a knot and $J_{N}(K;q)$ be the $N$-th colored Jones polynomials of $K$ evaluated at $q$. We have
\[\lim_{N \to \infty} \frac{\log|J_{N}(K;e^{\frac{2\pi i}{N}})|}{N} = \frac{\operatorname{Vol}(\mathbb{S}^{3} \backslash K)}{2\pi}, \]
where $\operatorname{Vol}(\mathbb{S}^{3} \backslash K)$ is the simplicial volume of the knot complement.
\end{conjecture}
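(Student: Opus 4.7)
The statement is an open conjecture in general, so rather than attempting a full proof I describe the two-step strategy one would pursue: reduce to the case of a hyperbolic knot via JSJ decomposition, then attack the hyperbolic case by a state-integral / saddle-point method. For a torus knot the simplicial volume vanishes, and the Rosso-Jones cabling formula forces $|J_N(K;e^{2\pi i/N})|$ to grow at most polynomially in $N$, giving the conjecture trivially (Kashaev-Tirkkonen). For a satellite knot with companion $C$ and pattern $P$, the cabling formula expresses $J_N(K;q)$ as a linear combination of colored Jones polynomials $J_{N'}(C;q)$ with $N'\le N$, while the simplicial volume is additive under JSJ. Combining these, the conjecture for general $K$ reduces to the conjecture for the hyperbolic JSJ pieces, so one may henceforth assume $K$ is hyperbolic.

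For hyperbolic $K$, I would fix an ideal triangulation $\mathcal{T}$ of $\SS^3 \backslash K$ with tetrahedra $\Delta_1,\ldots,\Delta_k$ and use a Kashaev-Murakami-Murakami style state sum to write
\[
J_N(K; e^{2\pi i/N}) \;=\; \sum_{m \in \Lambda_N} \prod_{j=1}^k \frac{1}{(q)_{a_j(m)}\,(q^{-1})_{b_j(m)}},
\]
where $(q)_n$ is the quantum Pochhammer symbol, $\Lambda_N$ an integer lattice of colourings, and $a_j,b_j$ affine functions of $m$ determined by $\Delta_j$. The Euler-Maclaurin formula together with the asymptotic $\log (e^{2\pi i/N})_{\lfloor Nx \rfloor} \sim \tfrac{N}{2\pi i}\,\Li(e^{2\pi i x})$ then converts the sum into a state integral
\[
J_N(K; e^{2\pi i/N}) \;=\; \int_\Gamma e^{N\,\Phi(z_1,\ldots,z_k)}\, dz_1 \cdots dz_k \,\bigl(1 + O(N^{-1})\bigr),
\]
in which $\Phi$ is an explicit finite sum of dilogarithms. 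A direct computation should show that the critical point equations $\partial\Phi/\partial z_j = 0$ coincide with Thurston's gluing equations for $\mathcal{T}$, so the complete hyperbolic structure on $\SS^3 \backslash K$ singles out a distinguished critical point $z^{\ast}$; the identity $2\im\,\Li(z) = D(z)$ (the Bloch-Wigner dilogarithm) would then yield $\Re \Phi(z^{\ast}) = \Vol(\SS^3 \backslash K)/(2\pi)$, and taking $\tfrac{1}{N}\log|\,\cdot\,|$ would deliver the conjecture.

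The main obstacle, and the reason the conjecture remains open, is the rigorous justification of the saddle-point step: one must deform the integration torus $\Gamma$ onto a Lefschetz thimble passing through $z^{\ast}$ without crossing the singular locus of $\Phi$, and then verify that $z^{\ast}$ strictly maximises $\Re \Phi$ among all critical points. This is a genuinely hard Picard-Lefschetz problem, sensitive to the combinatorics of $\mathcal{T}$ and to the global geometry of the complex volume function, and it has so far only been carried out case-by-case; the present paper executes it analytically for $K = 4_1$ and suggests a template, but a uniform treatment across all hyperbolic knots would be needed for a full proof.
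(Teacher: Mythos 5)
The statement you were asked about is not proved in the paper at all: Conjecture~\ref{cvc} is the classical volume conjecture of Kashaev and Murakami--Murakami, quoted purely as background, and it remains open both in this paper and in the literature. The paper's actual results (Theorems~\ref{mainthm1}--\ref{mainthm4}) concern the $M$-th colored Jones polynomial of the figure-eight knot at $(N+\tfrac{1}{2})$-th roots of unity and the Turaev--Viro invariant; even the figure-eight case of Conjecture~\ref{cvc} is only cited (Andersen--Hansen, Murakami), not re-proved here. So there is no paper proof to compare against, and your text --- which candidly ends by saying the conjecture remains open --- is a research programme, not a proof, and should not be presented as establishing the statement.

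Beyond that, two steps of your outline have concrete gaps worth flagging. First, the reduction to hyperbolic JSJ pieces does not go through as stated: the Rosso--Jones/cabling formula expresses $J_N$ of a satellite in terms of colored Jones polynomials of the companion evaluated at \emph{rescaled} arguments (powers of $q$ depending on the winding number), not at $e^{2\pi i/N'}$ for smaller colors, so knowing the conjecture for the companion at its own sequence of roots of unity gives no control over the satellite's evaluation at $e^{2\pi i/N}$; additivity of simplicial volume under JSJ does not repair this, and the satellite case of the volume conjecture is itself a well-known open problem (only special families such as Whitehead chains/doubles are settled). Second, as you yourself note, the deformation of the integration cycle onto a thimble through the geometric critical point, and the verification that this point dominates all others, is precisely the unsolved analytic core; the present paper carries out an analogous saddle-point analysis only for $4_1$ and only after substantial knot-specific estimates (Propositions~\ref{tanapp}--\ref{exprepn}, Lemma~\ref{lemmaFSA}), so no uniform argument is available. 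The correct assessment is simply that the statement stands as a conjecture.
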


In \cite{AH06} Andersen and Hansen used saddle point approximation to find out the AEF for the $N$-th colored Jones polynomials of the figure eight knot evaluated at $N$-th root of unity.

\begin{theorem}\label{cvc2}
The AEF for the $N$-th colored Jones polynomials of the figure eight knot evaluated at $N$-th root of unity is given by
\begin{align*}
J_{N} \left(4_1; e^{\frac{2\pi i}{N}} \right)
&\stackrel[N \to \infty]{\sim}{ } \frac{1}{3^{1/4}}N^{3/2} \exp \left( \frac{N \Vol(\mathbb{S}^3 \backslash 4_1)}{2\pi} \right) \\
&\hspace{8pt} = \hspace{8pt} - 2\pi^{3/2} \left( \frac{2}{\sqrt{-3}} \right)^{1/2} \left( \frac{N}{2\pi i} \right)^{3/2} \exp \left( \frac{N}{2\pi i} \times i \Vol (\mathbb{S}^3 \backslash 4_1) \right).
\end{align*}
\end{theorem}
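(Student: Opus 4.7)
The plan is to prove the AEF by applying the saddle-point method to Habiro's cyclotomic expansion of the colored Jones polynomial of $4_1$, following the strategy outlined in \cite{AH06}.

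First, I would start from Habiro's formula
\[
J_{N}(4_1; q) = \sum_{k=0}^{N-1}\prod_{j=1}^{k}(q^{(N-j)/2}-q^{-(N-j)/2})(q^{(N+j)/2}-q^{-(N+j)/2}).
\]
Substituting $q = e^{2\pi i/N}$ and using $q^{N/2}=-1$, each pair of factors collapses to $4\sin^{2}(j\pi/N)$, so the summand is a real positive quantity
\[
a_{k} = \prod_{j=1}^{k}\bigl(2\sin(j\pi/N)\bigr)^{2},
\]
and the problem reduces to the asymptotic analysis of the real sum $\sum_{k=0}^{N-1} a_{k}$.

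Second, I would pass to the logarithm and convert $\sum_{j=1}^{k}\log(2\sin(j\pi/N))$ into an integral via an Euler--Maclaurin expansion, obtaining, for $k/N$ in a compact subinterval of $(0,1)$, an identity of the shape
\[
\log a_{k} \;=\; -\frac{2N}{\pi}\Lambda\!\left(\frac{k\pi}{N}\right) + \log N + C + O(1/N),
\]
where $\Lambda(\theta) = -\int_{0}^{\theta}\log|2\sin t|\,dt$ is the Lobachevsky function. The $\log N$ correction is the subtle piece and needs to be tracked explicitly because it is responsible for one of the two factors of $N^{1/2}$ in the final prefactor.

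Third, I would locate the dominant saddle by solving $\Lambda'(\theta^{*}) = -\log|2\sin\theta^{*}| = 0$, which on $(0,\pi)$ has the two solutions $\pi/6$ and $5\pi/6$; only $\theta^{*} = 5\pi/6$ is a minimum of $\Lambda$, hence a maximum of $a_{k}$, with $\Lambda''(5\pi/6) = \sqrt{3}$. Using the Lobachevsky identity $2\Lambda(\pi/6) = 3\Lambda(\pi/3)$ and $\Vol(\SS^{3}\setminus 4_1) = 6\Lambda(\pi/3)$, the exponent at the saddle evaluates to exactly $N\Vol(\SS^{3}\setminus 4_1)/(2\pi)$. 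Gaussian integration in a neighbourhood of $k^{*} = 5N/6$ contributes a factor $N^{1/2}/3^{1/4}$; combined with the $\log N$ shift from step two this produces the claimed $N^{3/2}/3^{1/4}$ prefactor, and direct bookkeeping of the remaining numerical constants recasts the answer in the advertised form $2\pi^{3/2}(2/\sqrt{-3})^{1/2}(N/2\pi i)^{3/2}\exp\bigl((N/2\pi i)\cdot i\Vol\bigr)$.

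I expect the main obstacle to be the rigorous control of error terms. The Euler--Maclaurin expansion degenerates near $k=0$ and $k=N-1$, where $\log(2\sin(j\pi/N))$ has logarithmic singularities; one must separately bound the contribution of these near-endpoint indices and show it is negligible compared with the saddle-point bulk. Extending the local Gaussian approximation to a genuine asymptotic estimate of the entire sum — uniformly in $N$ and with a precise next-to-leading error — is the delicate step, and it is exactly here that the detailed Euler--Maclaurin subleading term feeds into the $N^{3/2}$ prefactor rather than just $N^{1/2}$.
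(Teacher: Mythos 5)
Your proposal is essentially correct, and the exponent and prefactor computations you sketch do check out (the maximal summand is $\sim 2N\sin\theta^{*}\,e^{N\Vol(\SS^3\backslash 4_1)/2\pi}$ with $2\sin(5\pi/6)=1$, and the Gaussian sum over $k$ contributes $N^{1/2}/3^{1/4}$, giving $N^{3/2}/3^{1/4}$), but your route is genuinely different from the one taken here. This paper quotes the statement from Andersen--Hansen and, in effect, re-derives it as the $a=0$, $u=0$ case of Theorems~\ref{mainthm1} and~\ref{mainthm1.5}: the summand is rewritten through the quantum dilogarithm $S_{\gamma}$, the sum is converted by the residue theorem into a contour integral of $\tan((M+a)\pi z)\,g_{M}(z)$, $g_{M}$ is approximated by $\exp\bigl((M+a)\Phi_{M}(z)\bigr)$ with $\Phi_{M}$ built from $\operatorname{Li}_2$, and a complex saddle point at $z_{0}=5/6$ is evaluated, with the quantum-dilogarithm ratio lemmas supplying the remaining constants. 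You instead exploit the special feature of the evaluation at $q=e^{2\pi i/N}$ that every Habiro summand is real and positive, and run a real-variable Laplace argument with Euler--Maclaurin and the Lobachevsky function $\Lambda$. What your approach buys is elementariness: no quantum dilogarithm, no contour deformation, no control of $I_{\gamma}$. What it gives up is exactly what this paper needs: positivity is lost the moment $u\neq 0$ or the color $M$ is decoupled from the order of the root of unity (the $(N+\tfrac12)$-th root with $M<N$), where the terms oscillate and the contour-integral/one-parameter-family saddle method of Theorem~\ref{FSA} becomes essential; your method also does not produce the Chern--Simons/torsion refinement of Theorem~\ref{asymsu2}. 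One point in your outline deserves care rather than being called a constant: the subleading correction in your step two is not $\log N + C$ with $C$ constant but $\log\bigl(2N\sin(k\pi/N)\bigr)$ plus Stirling-type constants that cancel, and it is the lucky coincidence $2\sin(5\pi/6)=1$ at the saddle that makes the overall constant come out to exactly $3^{-1/4}$; together with the near-endpoint indices (where $\log(2\sin(j\pi/N))$ degenerates) and the uniform tail bounds you already flag, this is where the rigor must be spent, but none of it is an obstruction of principle.
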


As a generalization of Theorem~\ref{cvc2}, in \cite{HM13} H. Murakami refined the asymptotic expansion formula of the colored Jones polynomials, capturing the Chern-Simons invariant together with the Reidemeister torsion of the knot. (A related result on colored HOMFLY polynomial is obtained in \cite{WA17}.) 

\begin{theorem}\label{asymsu2}
{\em (Asymptotic expansion formula for the colored Jones polynomials of the figure eight knot \cite{HM13})\/}
Let $u$ be a real number with $0 < u < \log((3+\sqrt{5})/2)$ and put $\xi = 2\pi i + u$. Then we have the following asymptotic equivalence for the colored Jones polynomials of the figure-eight knot~$4_{1}$:
\begin{align*}
J_{N} \lt( 4_{1} ; e^{\frac{\xi}{N}} \rt) \stackrel[N \to \infty]{\sim}{ }  \frac{\sqrt{-\pi}}{2 \sinh (u/2)} T(u)^{1/2} \left( \frac{N}{\xi}\right)^{1/2} \exp \left(\frac{N}{\xi} S(u)\right),
\end{align*}
where \[ S(u) = \operatorname{Li}_{2}\left(e^{u-\varphi(u)}\right) - \operatorname{Li}_{2}\left(e^{u+\varphi(u)}\right) - u\varphi(u) \]
and
\[ T(u) = \frac{2}{\sqrt{(e^{u} + e^{-u} +1)(e^{u} + e^{-u} -3)}}.\]
Here $\ds \varphi(u) = \arccosh(\cosh(u) - 1/2)$ and
\[ \operatorname{Li}_{2}(z) = - \int_{0}^{z} \frac{ \log(1-x)}{x} dx\] is the dilogarithm function.
\end{theorem}

In particular, the functions $S(u)$ and $T(u)$ are the Chern-Simons invariant and the cohomological twisted Reidemeister torsion respectively, both of which are associated with an irreducible representation of $\pi_1 (\mathbb{S}^3 \backslash 4_1)$ into $SL(2;\mathbb{C})$ sending the meridian to an element with eigenvalues $\exp(u/2)$ and $\exp(-u/2)$ \cite{HM13}.

\subsection{AEF of the $M$-th colored Jones polynomials of the figure eight knot at $(N+\frac{1}{2})$-th root of unity}\text{ }\\
The main ingredient of the proof of Theorem 1.6 in \cite{DKY17} is to find out an upper bound for the colored Jones polynomials of the figure eight knot. Recall that the $M$-th colored Jones polynomials of the figure eight knot is given by
$$ J_{M}(4_{1};q) =  1+\sum_{k=1}^{M-1} \prod_{l=1}^{k}\left(q^{\frac{M-l}{2}} - q^{-\frac{M-l}{2}} \right)\left(q^{\frac{M+l}{2}} - q^{-\frac{M+l}{2}} \right)$$

For $k,l \in  \{1,2,\dots, M-1\}$, let $\ds h_M(l) = \left(q^{\frac{M-l}{2}} - q^{-\frac{M-l}{2}} \right)\left(q^{\frac{M+l}{2}} - q^{-\frac{M+l}{2}} \right)$ and $g_{M}(k) = \prod_{l=1}^{k} \left| h_M (l)\right|$. The estimation for the upper bound of $g_M(k)$ can be stated as follows.

\begin{lemma}\label{uppbd1}
 For each $M$, let $k_{M} \in \{ 1,\dots, M-1 \}$ such that $g_M(k_M)$ achieves the maximum among all $g_M (k)$. Assume that $ \frac{M}{r}=\frac{M}{2N+1} \to d \in [0,\frac{1}{2}]$ and $\frac{k_M}{r} \to k_{d}$ as $r \to \infty$. Then we have
\[ \lim_{r \to \infty}\frac{1}{r} \log( g_M (k_M) ) = -\frac{1}{2\pi}\left( \Lambda(2\pi(k_d -d)) + \Lambda(2\pi(k_d +d)) \right) \leq \frac{\Vol(\mathbb{S}^3 \backslash 4_1)}{4\pi}.\]
Furthermore, the equality holds if and only if $(s=2d=1$ and $2k_d = \frac{5}{6})$ or $(s=2d=\frac{1}{2}$ and $2k_d = \frac{1}{3})$.
\end{lemma}
Lemma~\ref{uppbd1} follows easily from the argument in the proof of Theorem 1.6 in \cite{DKY17}. From this lemma we can see that for the figure eight knot, in order to find out the dominant terms among all the colored Jones polynomials in Theorem~\ref{relationship}, we only need to consider those $M$ satisfying $d=\frac{1}{2}$ or $d=\frac{1}{4}$.

This lemma motivates us to consider the AEF of the colored Jones polynomials of the figure eight knot with $s = 2d \sim 1$ or $s = 2d \sim \frac{1}{2}$. The key arguments of our proof is a one parameter family version of saddle point approximation, which allows us to generalize Theorem~\ref{asymsu2} to the case where $s \sim 1$. This technique has already been used to find out an analogue result of Theorem~\ref{asymsu2} for the colored HOMFLY polynomial of the figure eight knot (see \cite{WA17} for more details).

\begin{theorem}\label{FSA}{\em (One-parameter family version for the saddle point approximation)}
Let $\{ \Phi_{y}(z)\}_{y \in [0,1]}$ be a family of holomorphic functions smoothly depending on $y \in [0,1]$. Let $C(y,t): [0,1]^{2} \to \CC$ be a continuous family of contours with length uniformly bounded above by a fixed constant $L$, such that for each $y \in [0,1]$, $C(y,t)$ lies inside the domain of $\Phi_{y}(z)$, for which $z_{y}$ is the only saddle point along the contour $C(y,t)$ and $\max\operatorname{Re}\left[\Phi_{y}(z)\right]$ is attained at $z_{y}$. Further assume that $\left|\arg\left( \sqrt{-\frac{d^{2}\Phi_{0}}{dz^{2}}(z_{0})}\right)\right| < \pi /4$. Suppose we have an analytic function f(z) along the contour such that $f(z_{0}) \neq 0$. Then  for any sequence $\{y_{M} \}_{M \in \mathbb{N}}$ with $y_{M} \to 0$ as $M \to \infty$, we have the following generalized saddle point approximation:
\begin{align*}
&\int_{C(y_{M},t)} f(z) \exp (M \Phi_{y_{M}}(z)) dz \\
=  &\sqrt{\frac{2\pi}{M\left(-\frac{d^{2} \Phi_{y_{M}}}{dz^{2}} (z_{y_{M}})\right)}}\, f(z_{y_M})\exp(M \Phi_{y_{M}}(z_{y_{M}})) \left(1 + O\left(\frac{1}{M}\right)\right).
\end{align*}
\end{theorem}

\subsection{Main Results}

The main results of this paper are summarized as follows. First of all we consider the $M$-th\linebreak colored Jones polynomials around $(M+a)$-th root of unity $q = \exp\left(\frac{2\pi i + u}{M+a}\right)$ with some fixed non-negative real number $a$. One can easily see that when $a \in \NN$ and $u = 0$, we have $\ds \lim_{M \to \infty} J_{M}(4_{1},q) = 1$. So in the following discussion we only focus on the other situations.

\begin{theorem}\label{mainthm1} For $q = \exp\left(\frac{2\pi i + u}{M+a}\right)$, if $a \notin \NN$ or $u \neq 0$, we have
\begin{align}
J_{M}\left(4_1; q \right)
\stackrel[M \to \infty]{\sim}{ }  & \exp(a\varphi(u))\frac{(1-e^{u-\varphi(u)})^a}{(1 - e^{u+\varphi(u)})^a} \notag \\
&\quad \times  \frac{ \sqrt{-\pi} }{2\sinh (u/2)} T(u)^{1/2}\left(\frac{M+a}{\xi}\right)^{1/2} \exp\left((M+a)(S(u))\right)
\end{align}where
$S(u)$, $T(u)$ and $\varphi(u)$ are the functions appearing in Theorem~\ref{asymsu2}.
\end{theorem}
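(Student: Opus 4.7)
The plan is to apply the one-parameter saddle-point method of Theorem~\ref{FSA} to the Habiro-type sum formula
\[
J_M(4_1,q)\;=\;\sum_{k=0}^{M-1}\prod_{l=1}^{k}\bigl(q^{(M-l)/2}-q^{-(M-l)/2}\bigr)\bigl(q^{(M+l)/2}-q^{-(M+l)/2}\bigr),
\]
with $q=\exp(\xi/(M+a))$, $\xi=2\pi i+u$. Setting $t_l=l/(M+a)$ and $y=a/(M+a)\to 0$, each factor equals $2\sinh(\xi(1-y\pm t_l)/2)$, so the summand takes the form $\exp\bigl((M+a)\Psi_y(s_k)+\text{lower}\bigr)$, where $s_k=k/(M+a)$ and
\[
\Psi_y(s)\;=\;\int_0^s\Bigl[\log\bigl(2\sinh(\xi(1-y-t)/2)\bigr)+\log\bigl(2\sinh(\xi(1-y+t)/2)\bigr)\Bigr]dt
\]
is a family of holomorphic functions in $s$ depending smoothly on $y$, which at $y=0$ reduces to Murakami's potential from Theorem~\ref{asymsu2}.

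Next I would convert the outer sum into a contour integral, either by a Poisson-type rewriting or by Euler--Maclaurin and absorbing the residual boundary data into a slowly varying prefactor $F_y(s)$, obtaining
\[
J_M(4_1,q)\;\sim\;(M+a)\!\int_{C(y,t)}F_y(s)\exp\bigl((M+a)\Psi_y(s)\bigr)ds
\]
along a contour $C(y,t)$ deforming $[0,1]$ into the steepest-descent line through the $y=0$ saddle. The hypotheses of Theorem~\ref{FSA} (smoothness of $\Psi_y$ in $y$, uniqueness of $z_y$ along $C$, and the angle condition on $\arg\sqrt{-\Psi_0''(z_0)}$) are inherited by continuity from the $y=0$ analysis of~\cite{HM13}.

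The saddle equation $\Psi_0'(s)=0$ reduces, via $4\sinh A\sinh B=2(\cosh(A+B)-\cosh(A-B))$ together with $\cosh\xi=\cosh u$, to $\cosh(\xi s)=\cosh u-1/2$, whose geometric root is $s_0=\varphi(u)/\xi$. Using $\sinh(\pi i+w)=-\sinh w$ (valid since $\xi/2=\pi i+u/2$) and the dilogarithm primitive $\int\log(1-e^{2\tau})d\tau=-\tfrac{1}{2}\Li(e^{2\tau})$ one verifies $\Psi_0(s_0)=S(u)/\xi$. A direct computation exploiting $\sinh((u+\varphi(u))/2)\sinh((u-\varphi(u))/2)=-\tfrac{1}{4}$ gives $\Psi_0''(s_0)=2\xi\sinh\varphi(u)$, producing the Gaussian factor proportional to $\bigl((M+a)/\xi\bigr)^{1/2}T(u)^{1/2}$. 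Combined with the prefactor $F_0(s_0)$ from the boundary step, this recovers the complete $a=0$ asymptotics of Theorem~\ref{asymsu2}, including the $1/(2\sinh(u/2))$ normalization.

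The extra $a$-dependent factor arises from Taylor expanding the exponent in $y$: by the envelope theorem $\frac{d}{dy}\Psi_y(z_y)|_{y=0}=\partial_y\Psi_y(z_0)|_{y=0}$, so
\[
(M+a)\Psi_y(z_y)=(M+a)\Psi_0(z_0)+a\,\partial_y\Psi_y(z_0)|_{y=0}+O(1/M),
\]
and the linear-in-$y$ term contributes an honest $O(1)$ multiplicative factor $\exp\bigl(a\,\partial_y\Psi_y(z_0)|_{y=0}\bigr)$. Differentiating under the integral and computing $\int_0^{s_0}\coth(\xi(1\pm t)/2)dt$ yields
\[
\partial_y\Psi_y(s_0)\big|_{y=0}\;=\;\log\frac{\sinh(\xi(1-s_0)/2)}{\sinh(\xi(1+s_0)/2)};
\]
applying $\sinh(\pi i+w)=-\sinh w$ and the elementary identity $2\sinh(x/2)=-e^{-x/2}(1-e^x)$ rewrites this ratio as $e^{\varphi(u)}(1-e^{u-\varphi(u)})/(1-e^{u+\varphi(u)})$, whose $a$-th power is exactly the extra factor in the statement. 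I expect the principal obstacle to lie in the rigorous promotion of the discrete sum to the integral, particularly in controlling the boundary contributions where $\sinh(\xi(1-y-t_l)/2)$ becomes small for $l$ close to $M$, and in verifying the hypotheses of Theorem~\ref{FSA} uniformly in the chosen sequence $y_M$.
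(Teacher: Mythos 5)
Your saddle-point bookkeeping is essentially right and, pleasingly, your envelope-theorem extraction of the $a$-dependent factor (expanding $(M+a)\Psi_y(z_y)$ to first order in $y=a/(M+a)$ and computing $\partial_y\Psi_y(s_0)|_{y=0}=\log\bigl(\sinh(\xi(1-s_0)/2)/\sinh(\xi(1+s_0)/2)\bigr)$) is a clean repackaging of what the paper does via Lemma~\ref{diff1} (L'Hospital for $(M+a)(\Phi_M-\Phi_0)$), Lemma~\ref{diff2} ($z_M-z_0=O(1/(M+a))$ plus the saddle equation), and the separate $\exp(a\xi z_M)$ term; the saddle location, $\Psi_0(s_0)$ and $\Psi_0''(s_0)$ also match the paper's Equations~(\ref{quadeqn}) and~(\ref{tor}).

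The genuine gap is the step you defer: "convert the outer sum into a contour integral, either by a Poisson-type rewriting or by Euler--Maclaurin and absorbing the residual boundary data into a slowly varying prefactor $F_y(s)$." This is the technical core of the paper's proof and it cannot be waved through. Because $a\notin\NN$ or $u\neq 0$, the summand is genuinely oscillatory and the relevant saddle is off the real summation segment, so a real-axis Euler--Maclaurin comparison of the sum with $\int_0^1 e^{(M+a)\Psi_y(s)}\,ds$ does not give the asymptotics; one needs an analytic interpolation of the product $\prod_{l\le k}$ in the variable $k$ to legitimize contour deformation. The paper builds exactly this with the quantum dilogarithm $S_\gamma$ (Equations~(\ref{F1})--(\ref{F2})), must split off the first $\lceil a\rceil$ terms because for $a>0$ only the poles $\frac{2k+1}{2(M+a)}$ with $k\ge\lceil a\rceil$ lie in the domain of $g_M$, converts the remaining sum by the residue theorem with the $\tan((M+a)\pi z)$ kernel (Equation~(\ref{Jint1})), and controls the replacement of $g_M$ by $\exp((M+a)\tilde\Phi_M)$ through the $I_\gamma$ estimates (Propositions~\ref{tanapp},~\ref{exprepn}). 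Moreover, your $F_y(s_0)$ is not a harmless slowly varying constant: in the paper it is the explicit ratio $S_\gamma(-\pi-iu-(2a-1)\gamma)/S_\gamma(\pi-iu-(2a+1)\gamma)$, whose asymptotics (Lemma~\ref{Sratio}) requires its own residue analysis and supplies both the $1/(2\sinh(u/2))$ normalization and the factor $e^{2\pi i u(M+a)/\xi-2a\pi i}$; the exponential piece is needed to turn $\exp((M+a)\Phi_0(z_0))$ into the stated $S(u)$ exponent, and the phase $e^{-2a\pi i}$ is precisely what cancels the $e^{2\pi i a}$ coming from $\exp(a\xi z_M)=\exp(a(\phi(u)+2\pi i))$ and fixes the branch of the $a$-th powers --- a point where your argument is vulnerable, since for $a\notin\NN$ a $2\pi i$ branch slip in your $\log$ (or in identifying $e^{\xi s_0}$ with $e^{\varphi(u)}$) changes the claimed constant by $e^{2\pi i a}\neq 1$. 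Until you specify the interpolation, the splitting at $k=\lceil a\rceil$, and an actual computation of the prefactor with its branch, the constant in the asymptotic formula is not established.
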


In particular, for the case where $u=0$, we have

\begin{theorem}\label{mainthm1.5}
When $u=0$ and $a\notin \NN$, we have
\begin{align}
J_{M}\left(4_1; q \right)
\stackrel[M \to \infty]{\sim}{ }   &  -\frac{\sin a\pi}{a\pi} 2\pi^{3/2}  \left(\frac{2}{\sqrt{-3}}\right)^{1/2} \left(\frac{M+a}{2\pi i}\right)^{3/2}\exp\left(\frac{M+a}{2\pi i}\times i \Vol(\mathbb{S}^3 \backslash 4_1)\right)\notag\\
\stackrel[M \to \infty]{\sim}{ }  &  \frac{\sin a\pi}{a\pi} \frac{1}{3^{1/4}} (M+a)^{3/2} \exp\left(\frac{(M+a)\Vol(\mathbb{S}^3 \backslash 4_1)}{2\pi}\right),
\end{align}
where we take $\ds \frac{\sin a\pi}{a\pi}=1$ when $a=0$.
\end{theorem}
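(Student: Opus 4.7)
The strategy is to run the same saddle-point analysis used for Theorem~\ref{mainthm1}, but directly at $u=0$, since the prefactor $\frac{\sqrt{-\pi}}{2\sinh(u/2)}$ appearing in Theorem~\ref{mainthm1} is singular at $u=0$ and cannot simply be specialised. This is in strict analogy with the relationship between Theorem~\ref{asymsu2} (where the colored Jones polynomial grows like $N^{1/2}$) and Theorem~\ref{cvc2} (where at $u=0$ it grows like $N^{3/2}$): the change in the exponent of $M+a$ reflects a genuine change in the dominant integrand, not a specialization of Theorem~\ref{mainthm1}.

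I would begin from the exact formula
\[
J_M(4_1,q)=\sum_{k=0}^{M-1}\prod_{l=1}^{k}(q^{(M-l)/2}-q^{-(M-l)/2})(q^{(M+l)/2}-q^{-(M+l)/2})
\]
evaluated at $q=\exp(2\pi i/(M+a))$, and, following the integral-representation method already used in the proof of Theorem~\ref{mainthm1}, apply Poisson summation to rewrite the sum as $\sum_{n\in\ZZ}I_n$, where each $I_n$ is a contour integral of the form $\int f_n(z)\exp((M+a)\Phi_0(z))\,dz$. Here $\Phi_0$ is the $u\to 0$ limit of the phase used for Theorem~\ref{mainthm1}; its two relevant saddle points stay at $\pm i\pi/3$ (since $\phi(0)=i\pi/3$), and the non-integer shift $a$ enters each $f_n$ through a factor of the form $e^{2\pi i a\,\cdot}$.

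Next I would apply Theorem~\ref{FSA} to each dominant Poisson summand. The key observation is that the combination of the $n=0,\pm 1$ (and possibly a few nearby) contributions should assemble into a finite-geometric-sum-like quantity
\[
\sum_{n}e^{2\pi i a n}\,(\text{saddle contribution})_n ,
\]
and, using $\sum_{k=0}^{N-1}e^{2\pi i a k/N}\sim N\,e^{i\pi a}\,\frac{\sin(\pi a)}{\pi a}$ as $N\to\infty$, one extracts simultaneously the scalar $\sin(a\pi)/(a\pi)$ and the extra power of $M+a$ that upgrades the exponent from $(M+a)^{1/2}$ to $(M+a)^{3/2}$. The hypothesis $a\notin\NN$ is essential here: if $\sin(\pi a)=0$ the leading combination cancels, and the saddle-point contribution becomes genuinely subleading, so the argument no longer produces a clean AEF.

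Finally I would substitute $S(0)=i\Vol(\SS^3\backslash 4_1)$ and $T(0)^{1/2}=(2/\sqrt{-3})^{1/2}$ into the outcome of the saddle-point step and verify that the prefactor organises into $\frac{\sin a\pi}{a\pi}\cdot 2\pi^{3/2}(2/\sqrt{-3})^{1/2}$, matching the first displayed formula. The equivalence of the two displayed asymptotics then follows from a routine algebraic manipulation of $\bigl(\tfrac{M+a}{2\pi i}\bigr)^{3/2}\exp\bigl(\tfrac{M+a}{2\pi i}\cdot i\Vol(\SS^3\backslash 4_1)\bigr)$, with branch choices for the various roots consistent with those used throughout the paper. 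The hard step is the third one: isolating the correct combination of Poisson summands, justifying that the remaining terms decay strictly faster, and tracking the branch cuts so that the assembled $\sin(a\pi)/(a\pi)$ prefactor emerges with the correct overall phase. Because the $u=0$ limit of the formula in Theorem~\ref{mainthm1} genuinely diverges, the $u>0$ saddle-point analysis cannot be used as a black box; the bookkeeping has to be redone in the degenerate regime.
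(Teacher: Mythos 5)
The crux of your outline is step three, and as written it does not work. You propose to obtain the factor $\frac{\sin a\pi}{a\pi}$ together with the extra power of $M+a$ by assembling the $n=0,\pm1$ Poisson summands into a ``finite-geometric-sum-like quantity'' and invoking $\sum_{k=0}^{N-1}e^{2\pi i ak/N}\sim N e^{i\pi a}\frac{\sin \pi a}{\pi a}$. That identity produces a factor of $N$ only because it sums $N$ terms whose phases advance in steps of size $2\pi a/N$; a bounded collection of Poisson modes weighted by $e^{2\pi i a n}$ is a finite geometric series whose value is independent of $M$, so it cannot generate the extra factor of $M+a$ that upgrades $(M+a)^{1/2}$ to $(M+a)^{3/2}$, nor the $1/a$ in the prefactor. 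Moreover the enhancement is not a phenomenon located near the saddle at all, so no rearrangement of saddle contributions of the modes will produce it: the saddle-point integral at $u=0$ is still of size $(M+a)^{1/2}\exp\left((M+a)\Phi_0(z_0)\right)$, exactly as for $u>0$.

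What the paper actually does is reuse the entire $u>0$ machinery of Theorem~\ref{mainthm1} verbatim at $u=0$ --- the splitting (\ref{Jsplit}), the contour representation (\ref{Jint1}), Propositions~\ref{tanapp} and~\ref{exprepn}, Theorem~\ref{saddlepointapp}, (\ref{tor}), and Lemmas~\ref{diff1} and~\ref{diff2} are all valid for $0\le u<\log((3+\sqrt 5)/2)$ --- so, contrary to your closing remark, the $u>0$ analysis \emph{can} be used as a black box. The apparent $\frac{1}{2\sinh(u/2)}$ singularity lives entirely in the prefactor $\frac{S_{\gamma}(-\pi-iu-(2a-1)\gamma)}{S_{\gamma}(\pi-iu-(2a+1)\gamma)}$ in (\ref{Jfor}), and the only genuinely new ingredient at $u=0$ is part (2) of Lemma~\ref{Sratio}: this ratio equals $\frac{e^{-2a\pi i}-1}{e^{-2a\gamma i}-1}=\frac{e^{-a\pi i}\sin a\pi}{e^{-a\gamma i}\sin(a\gamma)}\sim e^{-a\pi i+a\gamma i}\frac{\sin a\pi}{a\pi}(M+a)$, since $\gamma\sim\pi/(M+a)$. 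This single endpoint factor supplies simultaneously the $\frac{\sin a\pi}{a\pi}$ and the extra power of $M+a$ (your quoted geometric-sum asymptotics does appear here, but as a ratio of two boundary quantum dilogarithm values, not as a sum over Poisson modes), and its phase $e^{-a\pi i}$ cancels against $\exp\left(a(\phi(0)+2\pi i)\right)\frac{(1-e^{-\phi(0)})^a}{(1-e^{\phi(0)})^a}=e^{a\pi i}$ coming from (\ref{Mto0}), with $\phi(0)=-\frac{5\pi i}{3}$ (note your $\phi(0)=i\pi/3$ is the wrong branch, which matters since $a\notin\NN$ makes $e^{a\phi(0)}$ branch-sensitive). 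To repair your proposal, drop the Poisson-summation step and instead compute the $u=0$ quantum dilogarithm ratio at the contour endpoints; the rest of your plan (same potential, same saddle, substitution of $T(0)$ and $\Phi_0(z_0)=\Vol(\SS^3\backslash 4_1)/2\pi$) then goes through as in the paper.
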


Next we consider the case where $a$ and $M$ satisfies some limiting constraints. Theorem~\ref{mainthm2} below corresponds to the case where $s \sim 1$.

\begin{theorem}\label{mainthm2} Let $q = \exp(\frac{2\pi i}{N+\frac{1}{2}})$, i.e. $a = N-M+\frac{1}{2}$. Let $\ds s = \lim_{N \to \infty}\frac{M}{N+1/2}$. Then
\begin{enumerate}
\item there exists some $\delta>0$ such that for any $1-\delta < s < 1$, we have
\begin{align}
J_{M}\left(4_1;q \right)
\stackrel[M \to \infty]{\sim}{ } &\frac{1}{i\sin(s\pi)}
 (N+\frac{1}{2})^{1/2} \frac{\sqrt{2\pi}\exp\left((N+\frac{1}{2})\tilde{\Phi}^{(s)}_{M}\left(z^{(s)}_{M}\right)\right)}{\sqrt{{\tilde{\Phi}}_{M}^{(s)''}(z^{(s)}_M)} }
\end{align}where
\begin{align*}
\tilde{\Phi}^{(s)}_{M}(z)
=&\frac{1}{2\pi i}\left[\operatorname{Li_{2}}\left(e^{-2\pi iz + 2\pi i\left(\frac{M}{N+\frac{1}{2}}\right)}\right) - \operatorname{Li_{2}}\left(e^{ 2\pi iz + 2\pi i\left( \frac{M}{N+\frac{1}{2}}\right)}\right)\right]+2\pi i\left( 1 - \frac{M}{N+1/2} \right)z
\end{align*}
and $z^{(s)}_M$ satisfies the equation
\begin{align}\label{saddeqn1}
\beta_{M}\omega^2 - (\beta_M^2 +1 - \beta_M)\omega + \beta_M = 0 ,
\end{align}
where $\ds \beta_M= e^{2\pi i(\frac{M}{N+\frac{1}{2}})}$ and $\ds \omega_M = e^{2\pi iz^{(s)}_{M}}$.
\item we have
$$\Re [\tilde{\Phi}^{(s)}_{M}(z^{(s)}_M)] = \frac{1}{2\pi} \Vol\left( \mathbb{S}^3 \backslash 4_1, 2\pi \left(1-\frac{M}{N+1/2}\right) \right), $$
where $ \Vol\left( \mathbb{S}^3 \backslash 4_1, 2\pi \left(1-\frac{M}{N+1/2}\right) \right)$ is the hyperbolic volume of the cone manifold with singularity the figure eight knot and cone angle $2\pi \left(1-\frac{M}{N+1/2}\right)$.
\end{enumerate}
\end{theorem}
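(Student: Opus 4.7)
The plan is to reduce the asymptotic analysis of $J_M(4_1, q)$ to a single saddle-point integral to which Theorem~\ref{FSA} applies. Starting from the standard sum formula
\[ J_M(4_1, q) = \sum_{k=0}^{M-1} \prod_{j=1}^k \bigl(q^{(M-j)/2} - q^{-(M-j)/2}\bigr)\bigl(q^{(M+j)/2} - q^{-(M+j)/2}\bigr), \]
substituting $q = \exp(2\pi i/(N+1/2))$ rewrites each factor as $2i\sin(\pi(M\pm j)/(N+1/2))$. Using the branch-aware identity $\log(2i\sin\pi x) = i\pi x + \log(1 - e^{-2\pi ix})$ together with the dilogarithm primitives $\frac{d}{dt}\operatorname{Li}_2(e^{\pm 2\pi it}) = \mp 2\pi i\log(1 - e^{\pm 2\pi it})$, Euler--Maclaurin summation expresses the logarithm of the $k$-th summand as $(N+\tfrac{1}{2})\,\tilde\Phi^{(s)}_M(k/(N+\tfrac{1}{2}))$ up to $O(1)$ corrections, recovering the potential in the statement; the linear-in-$z$ term $2\pi i(1 - M/(N+\tfrac{1}{2}))z$ collects the $i\pi(M\pm j)/(N+\tfrac{1}{2})$ contributions.

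Next, I will convert the discrete sum into a contour integral via Poisson summation. Only the zero Fourier mode contributes at leading order, the other modes being exponentially suppressed at the saddle; the change of variables $z = k/(N+\tfrac{1}{2})$ produces an extra $(N+\tfrac{1}{2})$ factor from $dk = (N+\tfrac{1}{2})\,dz$, which combines with the standard Gaussian factor $\sqrt{2\pi/((N+\tfrac{1}{2})\tilde\Phi_M^{(s)''})}$ to yield the $(N+\tfrac{1}{2})^{1/2}\sqrt{2\pi}/\sqrt{\tilde\Phi_M^{(s)''}}$ in the statement. The Euler--Maclaurin boundary contribution at $j = 0$ involves the factor $\{M\} = 2i\sin\pi s$, which after being absorbed into the overall normalization supplies the $1/(i\sin s\pi)$ prefactor.

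For the saddle-point step, differentiating $\tilde\Phi^{(s)}_M$ gives
\[ \frac{d\tilde\Phi^{(s)}_M}{dz}(z) = \log\bigl((1 - \beta_M\omega)(1 - \beta_M/\omega)\bigr) + 2\pi i\bigl(1 - M/(N+\tfrac{1}{2})\bigr), \]
with $\omega = e^{2\pi iz}$; setting this to zero and clearing denominators produces the quadratic~\eqref{saddeqn1}. At the limiting value $s = 1$, i.e.\ $\beta = 1$, this reduces to $\omega^2 - \omega + 1 = 0$ with distinct roots $\omega = e^{\pm i\pi/3}$, the classical figure-eight saddles. By continuity the two roots remain distinct and non-degenerate for $s \in (1-\delta,1)$ with $\delta$ small, and one of them, $z^{(s)}_M$, dominates. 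I will apply Theorem~\ref{FSA} with the family parameter $y_M = s - M/(N+\tfrac{1}{2}) \to 0$, choose a contour $C(y,t)$ of uniformly bounded length that passes through the dominant saddle and lies in the analyticity domain of the dilogarithms, and verify the argument condition $|\arg\sqrt{-\tilde\Phi_0''(z_0)}| < \pi/4$ at the limiting saddle by direct computation.

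The principal obstacle is the selection and deformation of the contour: one must avoid the moving branch cuts of $\operatorname{Li}_2(e^{\pm 2\pi iz + 2\pi iM/(N+1/2)})$ while still ensuring that the dominant saddle is attained and is the unique saddle along the contour in its analyticity region. Showing that the non-zero Fourier modes of the Poisson summation are exponentially subdominant, tracking the correct branches of all square roots and dilogarithms, and carefully extracting the $1/(i\sin s\pi)$ prefactor from the Euler--Maclaurin boundary corrections are the further technical points that require care.
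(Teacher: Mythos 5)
Your skeleton (rewrite the summand as $\exp\bigl((N+\tfrac12)\tilde\Phi^{(s)}_M\bigr)$, convert the $k$-sum to a contour integral, apply the one-parameter saddle point Theorem~\ref{FSA}, and your derivation of the saddle equation $\beta_M\omega^2-(\beta_M^2+1-\beta_M)\omega+\beta_M=0$) parallels the paper's proof, which does the same thing but with the quantum dilogarithm's exact functional equation in place of Euler--Maclaurin and a $\tan\bigl((M+a)\pi z\bigr)$ residue representation in place of Poisson summation. However, two steps of your plan have genuine gaps. First, the claim that Euler--Maclaurin gives the potential ``up to $O(1)$ corrections'' uniformly in $k$ is exactly the hard point, not a routine estimate: for $j$ near $N+\tfrac12-M=a$ the factors $1-q^{M+j}$ come within $O(1/N)$ of zero, so the logarithm of the summand has derivatives blowing up along the summation range, and the comparison of the Riemann sum of $\log(1-e^{2\pi i x})$ with its dilogarithm primitive near that quasi-resonance must be controlled uniformly before Poisson summation (applied to the smooth proxy) or any saddle deformation is legitimate. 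This is precisely what the paper's machinery is for: the exact $S_\gamma$ identities, the $I_\gamma$ bounds (which degenerate like $1/(\pi-|\Re z|)$ near the edges, forcing the $\epsilon$-offset contour), the splitting of the sum at $k=\lceil a\rceil$, and Lemma~\ref{uppbd} to discard the leftover piece. Your proposal has no substitute for this uniform control, and the related claim that the nonzero Fourier modes are ``exponentially suppressed at the saddle'' also needs an argument (the analogue of Proposition~\ref{tanapp2}), since the saddle is complex and each mode shifts the relevant steepest-descent geometry.

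Second, your account of the constant is wrong as stated. The prefactor $1/(i\sin s\pi)$ is an exact asymptotic constant, and in the paper it comes from evaluating the $k$-independent ratio of quantum dilogarithms (Lemma~\ref{Sratio}), which equals $\tfrac{2}{1-q^{M}}$, i.e.\ a $1/(1-e^{2\pi i s})$-type factor, combined with residual phases that cancel. In your Euler--Maclaurin bookkeeping the boundary terms give a half-weight of $\log(1-q^{M})$ at $j=0$ (hence a factor $\bigl(1-q^{M}\bigr)^{-1}$, not ``$\{M\}=2i\sin\pi s$''), plus a $j=k$ endpoint factor $\bigl[(1-q^{M-k})(1-q^{M+k})\bigr]^{1/2}$ which at the saddle equals $\beta_M^{1/2}$ and contributes a phase that must be tracked, not absorbed. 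Saying the boundary factor is ``absorbed into the overall normalization'' cannot produce its reciprocal with the correct phase; as written this step would deliver a constant off by a phase and a factor of $2$, which is fatal for an asymptotic equality (as opposed to an upper bound). To repair it you would have to carry out the endpoint bookkeeping exactly --- which is what Lemma~\ref{Sratio}, together with the factor $e^{-a\gamma i}$ from the residue normalization, accomplishes in the paper --- and you would still need the admissible contour family through $z^{(s)}_M$ with the $|\arg|<\pi/4$ condition (Lemma~\ref{lemmaFSA}), which you defer rather than construct.
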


Next, for $s \sim \frac{1}{2}$, we have
\begin{lemma}\label{mainlemma} There exists $\rho>0$ such that whenever $|s-\frac{1}{2}|<\rho$, we have
$$\lim_{N \to \infty} \frac{2\pi}{N+1/2}\log|J_M (4_1;q)| \leq k\Vol(\mathbb{S}^3 \backslash 4_1) $$ 
where $k\in(0,1)$ is some constant independent on $s$.
\end{lemma}

Lemma~\ref{mainlemma} implies that in order to study the asymptotics of the sum of the colored Jones polynomials, actually we only need to consider the sum of those with $s\sim 1$. Furthermore, the AEF for the sum can be found by the Laplace's method. As a result, by using Theorem~\ref{mainthm2}, we obtain the AEF for the Turaev-Viro invariants for the figure eight knot stated as follows.

\begin{theorem}\label{mainthm4} For any $r = 2N+1 > 3$, the AEF of the Turaev-Viro invariants of the figure eight knot complement is given by
\begin{align}
TV_{r}\left(\mathbb{S}^3 \backslash 4_1, e^{\frac{2\pi i}{r}}\right)
\stackrel[N \to \infty]{\sim}{ } & (\eta_{r}')^{2} \sum_{M: 1-\delta \leq s \leq 1}\left|\tilde{J}_{M}\left(4_1,e^{\frac{2\pi i }{N+\frac{1}{2}}}\right)\right|^2  \notag\\
\stackrel[N \to \infty]{\sim}{ } &   \left(\frac{1}{4}\right) \left(\frac{r}{2}\right)^{1/2} \left|\frac{2}{ \sqrt{-3}} \right|^{3/2} \exp \left(\frac{ r}{2\pi}\Vol(\mathbb{S}^3 \backslash 4_1)\right),
\end{align}
where $\dfrac{2}{ \sqrt{-3}}$ is the twisted Reidemeister torsion associated with the unique complete hyperbolic structure of $\mathbb{S}^3 \backslash 4_1$.
\end{theorem}

\subsection{Interpretation of the AEF for the colored Jones polynomials}

Here we give some comments on Theorem~\ref{mainthm2}. Note that if $s=1$ (e.g. $M=N$), we can see that as $M$ goes to $\infty$, $\tilde{\Phi}^{(1)}_M(z)$ tends to the function
$$  \tilde{\Phi}^{(1)}_\infty(z)=\frac{1}{2\pi i}\left[\operatorname{Li_{2}}\left(e^{-2\pi i z}\right) -  \operatorname{Li_{2}}\left(e^{ 2\pi i z}\right)\right]   .$$
Moreover, the saddle point equation ``tends to''
$$  \omega^2 - \omega + 1= 0    ,$$
for which the suitable solution is given by
\begin{align}\label{sadloc}
z^{(1)}_\infty =\frac{5}{6}\text{ and }\omega = \exp(2\pi i z^{(1)}_\infty ) = \exp \left(\frac{5\pi i}{3}\right)
\end{align}
Furthermore, the evaluation of the function $\tilde{\Phi}^{(1)}_\infty(z)$ at the point $z=\dfrac{5}{6}$ gives the hyperbolic volume of the figure eight knot:
$$ \tilde{\Phi}^{(1)}_\infty \left(\frac{5}{6}\right) = \frac{1}{2\pi i} \left[\operatorname{Li_{2}}\left(e^{-\frac{5\pi i}{3}}\right) -  \operatorname{Li_{2}}\left(e^{ \frac{5\pi i}{3}}\right)\right] =\frac{1}{2\pi} \Vol(\mathbb{S}^3 \backslash 4_1)  $$

This observation is consistent with our expectation that the growth rate of the colored Jones polynomials with $s \sim 1$ is close to the hyperbolic volume of the figure eight knot.

Moreover, the growth rate of the colored Jones polynomials with $s \sim 1$ is given by

\begin{align}
\tilde{\Phi}^{(s)}_{M}(z^{(s)}_M)
=&\frac{1}{2\pi i}\left[\operatorname{Li_{2}}\left(e^{-2\pi iz^{(s)}_M + 2\pi i\left(\frac{M}{N+\frac{1}{2}}\right)}\right) - \operatorname{Li_{2}}\left(e^{ 2\pi iz^{(s)}_M + 2\pi i\left( \frac{M}{N+\frac{1}{2}}\right)}\right)\right] \notag\\
& \qquad+2\pi i\left( 1 - \frac{M}{N+1/2} \right)z^{(s)}_M
\end{align}
with $z^{(s)}_M$ satisfies the equation
\begin{align}\label{saddeqn1}
\beta_{M}\omega_M^2 - (\beta_M^2 +1 - \beta_M)\omega_M + \beta_M = 0 ,
\end{align}
where $\ds \beta_M= e^{2\pi i(\frac{M}{N+\frac{1}{2}})}$ and $\ds \omega_M = e^{2\pi iz^{(s)}_{M}}$.

Note that Equation~(\ref{saddeqn1}) is equivalent to the equation
$$ \omega_M + \omega_M^{-1} = \beta_M + \beta_M^{-1} -1 $$

If we write $\omega = \omega_M$ and $B = \beta_M ^{-1}$, then the equation can be written as
\begin{align}
 \omega + \omega^{-1} = B + B^{-1} -1
\end{align}
and the value $\tilde{\Phi}^{(s)}_{M}(z^{(s)}_M)$ can be expressed in the form
\begin{align}
&\hspace{14pt} \tilde{\Phi}^{(s)}_{M}(z^{(s)}_M)\\
&= \frac{1}{2\pi i}\left[\operatorname{Li_{2}}\left(\omega^{-1}B^{-1}\right) - \operatorname{Li_{2}}\left(\omega B^{-1}\right)\right] +2\pi i\left( 1 - \frac{M}{N+1/2} \right)z^{(s)}_M  \notag\\
&= \frac{1}{2\pi i}\left[\operatorname{Li_{2}}\left(\omega^{-1}B^{-1}\right) - \operatorname{Li_{2}}\left(\omega B^{-1}\right)\right] +2\pi i(z^{(s)}_M - 1)\left( 1 - \frac{M}{N+1/2} \right) + 2\pi i \left(1- \frac{M}{N+1/2}\right) \notag\\
&=  \frac{1}{2\pi i}\left[\operatorname{Li_{2}}\left(\omega^{-1}B^{-1}\right) - \operatorname{Li_{2}}\left(\omega B^{-1}\right) +(2\pi i (z^{(s)}_M-1))(2\pi i (1-\frac{M}{N+1/2})\right] + 2\pi i \left(1- \frac{M}{N+1/2}\right)  \notag\\
&=  \frac{1}{2\pi i}\left[\operatorname{Li_{2}}\left(\omega^{-1}B^{-1}\right) - \operatorname{Li_{2}}\left(\omega B^{-1}\right) +\log \omega \log B\right] + 2\pi i \left(1- \frac{M}{N+1/2}\right) \notag\\
&=  \frac{1}{2\pi i}H( \omega, B) + 2\pi i \left(1- \frac{M}{N+1/2}\right)
\end{align}
where $H(x,y)$ is the function appear in \cite{HM04.2} given by
$$ H(x,y) = \operatorname{Li_{2}} ( x^{-1}y^{-1}) - \operatorname{Li_{2}}( x y^{-1}) + \log x \log y $$
and $\log$ is the principal logarithm. In particular, we have
\begin{align*}
\exp\left( \left(N+\frac{1}{2}\right)  \tilde{\Phi}^{(s)}_{M}(z^{(s)}_M)  \right)  
= \exp\left( \frac{N+1/2}{2\pi i}  H(\omega,B)  + \pi i  \right)
= - \exp\left( \frac{N+1/2}{2\pi i}  H(\omega,B) \right)
\end{align*}
By Theorem 1.2 and Remark 1.4 in \cite{HM04.1}, we have the second part of Theorem~\ref{mainthm2}.

Finally we compare our result with \cite{HM04.1}. Let $5/6 < r < 1$ be fixed. In \cite{HM04.1}, the growth rates of the $M$-th colored Jones polynomials evaluated at $\exp ( 2\pi i r /M)$ is computed (note that the $M$ here is the $N$ in \cite{HM04.1}). It depends on whether $r$ is irrational. The difference between rational and irrational $r$ is that for rational $r$, the equation $h_M(l) |_{q =\exp ( 2\pi i r /M) } =0$, where
\begin{align*}
h_M(l) |_{q =\exp ( 2\pi i r /M) } &= (q^{(M+l)/2} - q^{-(M+l)/2}) (q^{(M-l)/2} - q^{-(M-l)/2}) \\
&= -4\sin( (M+l)\pi r/M)\sin( (M-l)\pi r  /M),
\end{align*}
has an integer solution $l = M(1-r)/r \in (1,M/5)\cap \NN$ for certain choices of $M \in \NN$. In particular we have
$$ \prod_{j=1}^{L}h_M(l) |_{q=\exp(2\pi i r/M) } = 0 \times \text{ something that may have exponential growth} = 0$$
whenever $M(1-r)/r<L<M-1$.
It is natural to compare our Theorem~\ref{mainthm2} to Murakami's result with $r = \frac{M}{N+1/2}$. Consider the evaluation 
$$h_M(l)|_{\exp(2\pi i M/ (N+1/2))} = -4\sin\left((M+l)\pi/(N+1/2)\right)\sin\left( (M-l)\pi /(N+1/2)\right)$$
First of all, the number $\frac{M(1-r)}{r}$ becomses $(N+\frac{1}{2} - M) \notin \NN$ . Also we have $\dfrac{M\pm l}{N+1/2}\neq 1$. Furthermore, since $0<l<M<N+1/2$, we have
$$ 0< \frac{M+l}{N+1/2} < \frac{2N}{N+1/2} < 2 \text{ and } 0< \frac{M-l}{N+1/2} < \frac{M}{N+1/2}< 2$$
Hence the evaluation $h_M(l)|_{\exp(2\pi i M/ (N+1/2))}$ never vanishes for any $M,N,l \in \NN, 0<l<M<N+1/2$.

We suggest that this kind of vanishing phenomenon is the reason why the Turaev-Viro invariants and the Reshetikhin-Turaev invariants have different asymptotic behaviors at unitary roots and non-unitary roots (see for example \cite{Ga98}). Similar phenomenon can also be found in the evaluation of $M$-th colored Jones polynomials at ($M$+integer)th-root of unity.

\subsection{From the volume conjecture for colored Jones polynomials to the volume conjecture for Turaev-Viro invariants}

Finally we summarized the techniques used in this paper and try to relate the AEF of colored Jones polynomials and that of Turaev-Viro invariants from an analytical perspective.

It is suggested by H. Murakami \cite{HM11} that the AEF of the colored Jones polynomials can be expressed in the form of a contour integral of a function with the form $e^{N\Phi^{(1)}_\infty (z)}$ along some suitable contour $C$. Here the function $\Phi^{(1)}_\infty(z)$ is called the potential function of the knot $K$ and has the property that the evaluation of the potential function at some saddle point is equal to the complex volume of the knot $K$.

In this paper, the authors generalize the above approach by using a one-parameter family version of the saddle point approximations. This gives a family of potential functions $\tilde{\Phi}^{(s)}_M (z)$ for $\ds s=\lim_{N \to \infty} \frac{M}{N+1/2} \sim~1$. From this we obtained the AEF as stated in Theorem~\ref{mainthm1},~\ref{mainthm1.5} and~\ref{mainthm2}.

This idea can be summarized by the following conjecture:
\begin{conjecture}\label{paravc}
For any hyperbolic knot $K$ and two integers $M$ and $N$, we denote $\ds s = \lim_{N \to \infty} \frac{M}{N+1/2}$. Then there exists a small neighborhood $U\subset \RR$ of $s=1$ such that for any $M$, $N$ with $s \in U$, we have a holomorphic function $\Phi^{(s)}_{M}(z)$ satisfying the following properties:
\begin{enumerate}
\item the holomorphic function $\Phi^{(1)}_{N}(z)$ gives the potential function of the knot $K$ by taking limit $N\to \infty$, i.e.
$$  \lim_{N \to \infty} \Phi^{(1)}_N (z) = \Phi^{(1)}_\infty (z) .$$
Furthermore, denote $z^{(1)}_0$ to be the non-degenerate saddle point of the potential function $\Phi^{(1)}_\infty (z)$ that gives the complex volume of the knot $K$, i.e.
$$ \frac{d}{dz}\Phi^{(1)}_\infty (z^{(1)}_\infty) = 0 \quad \text{and} \quad \frac{d^2}{dz^2} \Phi^{(1)}_\infty (z^{(1)}_\infty) \neq 0.$$
Then there exist a smooth choices of saddle point $z_M^{(s)}$ of the family of potential functions such that
\begin{enumerate}
\item the points $z_M$ satisfy the saddle point equations and they are non-degenerate, i.e.
$$ \frac{d}{dz} \Phi^{(s)}_M (z^{(s)}_M) = 0 \quad \text{and} \quad \frac{d^2}{dz^2} \Phi^{(s)}_M(z^{(s)}_M) \neq 0 ;$$
\item as $M \to N$, we have
 $$z^{(s)}_M \to z^{(1)}_\infty \quad \text{and} \quad \Phi^{(s)}_{M}(z^{(s)}_M) \to \Phi^{(1)}_\infty (z^{(1)}_\infty) = \frac{1}{2\pi}[\Vol (K) + i \CS (K)].$$
\end{enumerate}
\item the family of potential functions determine the AEF of the colored Jones polynomials in the following way:
\begin{align*}
J_{M} (K, e^{\frac{2\pi i}{N+1/2}}) \stackrel[M \to \infty]{\sim}{ } \text{(constant)} \times \left( \frac{N}{2\pi i} \right)^{3/2}\frac{ \exp \left( (N+1/2) \times \Phi_M^{(s)}(z^{(s)}_M) \right)}{\sqrt{ \frac{d^2}{dz^2} \Phi_M^{(s)} (z^{(s)}_M)}}.
\end{align*}
\end{enumerate}
\end{conjecture}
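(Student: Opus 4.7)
The plan is to generalize the one-parameter saddle point strategy used for the figure-eight knot to an arbitrary hyperbolic knot $K$. The first step is to obtain a finite-sum or contour-integral representation of $J_M\bigl(K,e^{2\pi i/(N+1/2)}\bigr)$ whose summand takes the form $\exp\bigl((N+1/2)\Phi^{(s)}_M(z)\bigr)$ up to a subexponential prefactor. For many hyperbolic knots such a formula is available from the Kashaev/Murakami--Murakami state sum on an ideal triangulation of $\SS^3\setminus K$, with one summation variable per tetrahedron and an exponent assembled from quantum dilogarithms of the shape parameters. Applying Poisson summation (as in Ohtsuki's work on twist and two-bridge knots) converts the sum into an integral over a real-analytic cycle, putting the integrand into a form directly amenable to Theorem~\ref{FSA}.

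Next I would build the family $\Phi^{(s)}_M(z)$ by keeping the $s$-dependence explicit. Using the standard asymptotic $\log(e^{2\pi i u};q)_k \sim \Li\bigl(e^{2\pi iu+2\pi ik/(N+1/2)}\bigr)/\bigl(2\pi i/(N+1/2)\bigr)$, the $M=N$ (equivalently $s=1$) limit must recover the classical potential function $\Phi^{(1)}_0(z)$ whose saddle point $z^{(1)}_0$ produces the complex volume; this is the content of the volume conjecture at the level of the potential, known in all hyperbolic cases where the volume conjecture has been verified. For $s$ near $1$ the saddle-point equations should coincide with deformed gluing equations for an incomplete hyperbolic structure on $\SS^3 \setminus K$ in which the meridian holonomy is a prescribed function of $s$, so the family $z^{(s)}_M$ lives on the geometric component of the $A$-polynomial curve. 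The convergence $\Phi^{(s)}_M(z^{(s)}_M)\to \Vol(K)+i\CS(K)$ then reduces to continuity of the complex volume on the character variety near the complete structure.

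With the setup in place, verifying item~(2) is an application of Theorem~\ref{FSA}. The hypotheses split into (a) existence, smoothness in $s$ and non-degeneracy of $z^{(s)}_M$, which follows from the implicit function theorem given that the Hessian at $z^{(1)}_0$ is non-zero---this being equivalent to non-vanishing of the Reidemeister torsion $T$ at the complete structure and hence to Mostow rigidity---and (b) existence of a continuous family of contours along which $z^{(s)}_M$ is the unique dominant saddle. Item (b) would be carried out by starting from the contour produced by Poisson summation and deforming it to a union of Lefschetz thimbles, tracking the flow of saddle points as $s$ varies in a sufficiently small neighborhood $U$ of $1$.

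The hardest step will be the contour-deformation and dominance argument, because for a general $K$ the potential $\Phi^{(s)}_M$ has many saddles (essentially one per $SL(2,\CC)$ character), and one must show that $\Re \Phi^{(s)}_M$ is strictly smaller at every competing saddle along the chosen thimble throughout $U$. A related obstruction is the possibility of Stokes phenomena: if two saddles collide as $s$ moves in $U$, the prefactor in Theorem~\ref{FSA} blows up and the single-saddle hypothesis fails. Handling these wall-crossings in full generality appears to require detailed information about the $A$-polynomial of $K$, so a realistic first step is to establish the conjecture under the hypothesis that the geometric saddle is isolated and dominant throughout $U$, and then verify this hypothesis for classes of knots---twist knots and two-bridge knots---where the techniques that already succeed for $4_1$ can be pushed through by direct computation.
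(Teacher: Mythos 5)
Be aware first of the status of this statement in the paper: Conjecture~\ref{paravc} is stated as a \emph{conjecture} for general hyperbolic knots, and the authors claim only that it holds for $K=4_1$, where it is verified by the explicit Habiro-type sum for $J_M(4_1;q)$, the quantum dilogarithm manipulations of Section~\ref{sec2}, and the one-parameter saddle point approximation of Theorem~\ref{FSA} (i.e.\ by Theorems~\ref{mainthm1}, \ref{mainthm1.5} and~\ref{mainthm2}). Your proposal aims at the general statement, and as a proof it has genuine gaps at exactly the points the paper leaves open. The first gap is the input you take for granted in step one: that for a general hyperbolic knot $K$ one has an integral representation whose potential function recovers the complex volume at a saddle, and that the $s=1$ limit ``is the content of the volume conjecture \dots known in all hyperbolic cases where the volume conjecture has been verified.'' That is precisely the unproven part of item~(1) of the conjecture; invoking it makes the argument circular for all knots outside the short list (figure-eight, some twist and two-bridge knots, Whitehead chains) where it has actually been established. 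The second gap is the dominance and contour statement needed to apply Theorem~\ref{FSA}: for a general $K$ the potential has many saddles, and you yourself note that one must rule out Stokes wall-crossing and show the geometric saddle is the unique maximizer of $\Re\Phi^{(s)}_M$ along a contour of uniformly bounded length for all $s$ in a neighborhood $U$ of $1$. No mechanism is offered to prove this beyond ``deform to Lefschetz thimbles and track the saddles,'' and your closing paragraph concedes the point by proposing to \emph{assume} isolation and dominance as a hypothesis --- which changes the statement being proved.

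What your outline does correctly capture is the architecture the paper actually uses for $4_1$: a family of potentials $\Phi^{(s)}_M$ depending on the ratio $s$, non-degeneracy of the limiting saddle (tied to the non-vanishing of the twisted Reidemeister torsion), smooth dependence of $z^{(s)}_M$ on $s$ via the saddle point equation, and the one-parameter saddle point approximation. For the figure-eight knot the paper avoids your hard steps entirely because the saddle point equation is an explicit quadratic (Equation~(\ref{saddeqn1})), the contour is constructed by hand following Murakami's Lemma~3.4 argument (Lemma~\ref{lemmaFSA}), and dominance of the geometric contribution is checked directly (Lemma~\ref{uppbd}, Propositions~\ref{positive} and~\ref{positive2}). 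So your proposal should be read as a reasonable research program toward the conjecture --- essentially the program the authors themselves suggest --- rather than a proof; to make it rigorous even for two-bridge or twist knots you would need to supply the Poisson-summation integral representation, verify the potential-level volume identity, and prove the single-dominant-saddle property on an explicit contour, none of which are reduced to known results in your sketch.
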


In particular Conjecture~\ref{paravc} is true for $K = 4_1$.

Similar idea can be applied to the study of AEF of the Turaev-Viro invariants. For example, naively the TV invariants of the figure eight knot complement can be thought of as a double integral over a suitable surface, with one integral corresponding to the sum inside each colored Jones polynomials and the other integral corresponding to the sum over all the colored Jones polynomials. Note that this idea can be found in \cite{AH06} where the AEF for RT invariant is studied. So it is natural to think that the volume conjecture for TV invariants is equivalent to the 2-dimensional saddle point approximations over a suitable surface. Note that in general the sum inside each colored Jones polynomials could be multi-index $k > 1$. Naively the volume conjecture for TV invariants is equivalent to the $(k+1)$ dimensional saddle point approximations over a suitable $(k+1)$ dimensional manifold.

In this paper our approach is different from that in \cite{AH06}. We break the 2-dimensional saddle point approximations into iterated 1-dimensional saddle point approximations, by first finding the AEF of colored Jones polynomials with $s\sim 1$ parametrized by the ratio $\ds \frac{M}{N+1/2}$ and then apply the saddle point approximations again along the parameter $s$.

Besides, from the development of the volume conjecture, we expect that the AEF of the colored Jones polynomials should be related to the character variety of the knot complement. By Mostow rigidity, there exists a unique point on the character variety which corresponds to the complete hyperbolic structure. The classical volume conjecture is about the topology (Reidemeister torsion) and geometry (hyperbolic volume) at this point. From an analytical perspective, the volume conjecture corresponds to the classical saddle point approximations.

In this paper we study the AEF of the $M$-th colored Jones polynomials evaluated at $(N+1/2)$-th root of unity. By introducing the limiting ratio $\ds s = \lim_{N \to \infty} \frac{M}{N+1/2}$, the AEF of $J_M (4_1, \exp(2\pi i/(N+1/2)))$ has been found out for $s\sim 1$. Moreover, the real part of the exponential growth rate coincides with the volume of the cone manifold. Therefore, the number $s$ can be thought of a parametrization of the points on the variety. Using the idea of the classical case, the AEF of the colored Jones polynomials with limiting ratio $s$ should also capture the same types of topological and geometrical information. From an analytical perspective, this kind of `volume conjecture' corresponds to the one-parameter family version of saddle point approximations.

In our study about the figure eight knot, although we cannot find out the explicit AEF for the case where $s \sim 1/2$, we are able to show that their exponential growth rate is less than the hyperbolic volume of the figure eight knot complement in the sense of Lemma~\ref{mainlemma}. This can be explained as follows. By the work of Thurston \cite{T79}, we know that the hyperbolic volume of the manifold with complete hyperbolic structure is strictly greater than that with incomplete hyperbolic structure. Hence, if $s$ is not close to 1 (that means the point is away from the point with complete structure), then the exponential growth rate (volume of the manifold at that point) is strictly smaller and hence can be ignored.

From above discussion, we expect that this kind of phenomenon is true for any hyperbolic knot. More precisely the conjecture can be stated as follows.

\begin{conjecture}\label{s1dom}
In the context of Conjecture~\ref{paravc}, for any hyperbolic knot $K$, the sum of colored Jones polynomials with $s \in U$ dominates the ones with $s \notin U$, i.e.
$$ \sum_{M: s \notin U}\left|\tilde{J}_{M}\left(K,e^{\frac{2\pi i }{N+\frac{1}{2}}}\right)\right|^2  = o \left( \sum_{M: s \in U}\left|\tilde{J}_{M}\left(K ,e^{\frac{2\pi i }{N+\frac{1}{2}}}\right)\right|^2  \right),$$
where $a_N = o (b_N)$ if and only if $\ds \lim_{N \to \infty} \frac{a_N}{b_N} = 0 $.
\end{conjecture}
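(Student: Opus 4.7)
The plan is to mirror the argument already carried out for $4_1$ in Section 2, replacing the case-by-case estimates of Theorems~\ref{mainthm2}--\ref{mainthm3} by the general machinery assumed in Conjecture~\ref{paravc} together with an upper bound analogous to Lemma~\ref{uppbd1} that holds uniformly for $s$ bounded away from the neighborhood $U$. Concretely, I would first establish a pointwise estimate
\[
\frac{1}{r}\log \left|J_{M}\left(K,e^{\frac{2\pi i}{N+1/2}}\right)\right|^{2} \leq v(s) + o(1)
\]
for some upper semi-continuous function $v:[0,1]\to\RR$, together with the strict inequality $v(s)<v(1)=\Vol(\SS^{3}\backslash K)/(2\pi)$ for all $s\neq 1$. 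Once this is in place, the sum over $M$ with $s\notin U$ contains $O(r)$ terms, each bounded by $r^{C}\exp(r\sup_{s\notin U}v(s))$, whereas by Conjecture~\ref{paravc}(2) there is at least one term with $s\in U$ of order $r^{3}\exp(r\,v(1))$. The strict gap $v(1)-\sup_{s\notin U}v(s)>0$ then forces the tail to be exponentially negligible relative to the $s\in U$ sum.

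The next step is to identify the limiting growth rate $v(s)$ with the volume of a hyperbolic cone manifold, extending the rewriting at the end of Section 1 that puts $(N+\tfrac{1}{2})\tilde\Phi^{(s)}_{M}(z^{(s)}_{M})$ into Murakami's form $H(\omega,B)/(2\pi i)$ plus a boundary term. By Thurston's hyperbolic Dehn surgery theory and the Schl\"afli formula for cone manifolds, near $s=1$ the real part of this quantity should equal the hyperbolic volume of $\SS^{3}\backslash K$ endowed with a cone structure whose cone angle along the meridian is a monotone function of $1-s$, and this volume is strictly decreasing in the cone angle throughout the range where the structure exists. Extending $v(s)$ to all of $[0,1]$ amounts to selecting the correct saddle points on the various components of the character variety of $\pi_{1}(\SS^{3}\backslash K)$; the key geometric input is Mostow rigidity together with Thurston's maximum principle for volume on the deformation space~\cite{T79}, which supplies the desired strict inequality $v(s)<v(1)$ for $s\notin U$.

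The main obstacle is producing a uniform upper bound on $|J_{M}|$ valid for all $s\notin U$. For $4_{1}$ the proof splits into the regime $s\sim 1/2$, handled by Theorem~\ref{mainthm3}, and the regime where $s$ is bounded away from both $1/2$ and $1$, which is controlled by the crude estimate of Lemma~\ref{uppbd1}; the specific form of the Habiro state sum makes both halves tractable. For a general hyperbolic knot no such dichotomy is available: the character variety may possess several non-geometric components, each contributing candidate saddles whose exponential contributions must be compared simultaneously. In particular, one must show that the state-sum representation of $J_{M}(K,q)$, to which Theorem~\ref{FSA} is to be applied, does not develop spurious dominant saddles on any non-geometric component, and that the saddle-point analysis remains valid as $s$ approaches the singular loci where the dilogarithm arguments tend to $1$. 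Overcoming this obstacle appears to require a global analytic control of the potential function on the zero locus of the $A$-polynomial of $K$, which for $4_{1}$ is made transparent by the explicit two-dilogarithm form of $\tilde\Phi^{(s)}_{M}$ but is not currently available in general.
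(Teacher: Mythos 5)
You should note first that the statement you are asked to prove is stated in the paper as Conjecture~\ref{s1dom}: the paper offers no proof of it for general hyperbolic knots, and only verifies the analogous domination for $K=4_1$, where it follows from the explicit Habiro state sum via Lemma~\ref{uppbd1} together with the asymptotics of Theorems~\ref{mainthm2} and~\ref{mainthm3}. Your proposal correctly reproduces that strategy in outline, but it is a research program rather than a proof, and you concede as much in your final paragraph. The decisive gap is your very first step: the pointwise bound $\frac{1}{r}\log|J_M(K,e^{2\pi i/(N+1/2)})|^2 \le v(s)+o(1)$ with an upper semi-continuous $v$ satisfying the strict inequality $v(s)<v(1)=\Vol(\SS^3\backslash K)/(2\pi)$ for $s\neq 1$ is essentially equivalent to the conjecture itself, and nothing in Conjecture~\ref{paravc} supplies it. For $4_1$ this bound is extracted term-by-term from the product formula $\prod_l |(q^{(M-l)/2}-q^{-(M-l)/2})(q^{(M+l)/2}-q^{-(M+l)/2})|$ and an explicit Lobachevsky-function maximization; for a general hyperbolic knot no analogous state-sum estimate is known, and Conjecture~\ref{paravc} only posits the existence of potential functions and saddle points for $s$ \emph{near} $1$, saying nothing about the regime $s\notin U$ that your tail estimate must control.

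The second step is likewise not a proof. Identifying the putative growth rate $v(s)$ with the volume of a cone manifold and invoking Thurston's monotonicity/Schl\"afli argument presupposes that the dominant contribution to $J_M$ for every $s\notin U$ comes from a saddle on the geometric component of the character variety; but as you yourself observe, spurious saddles on non-geometric components cannot be excluded in general, and the saddle-point machinery of Theorem~\ref{FSA} requires contours avoiding the singular loci of the dilogarithms, which has only been verified for the two-dilogarithm potential of $4_1$ (Lemma~\ref{lemmaFSA}). Moreover, even for $4_1$ the paper does not obtain an exact AEF for $s\sim 1/2$, only an upper bound (Theorem~\ref{mainthm3}), and the comparison of the two regimes ultimately rests on the explicit quantum-dilogarithm ratios of Lemmas~\ref{Sratio1} and~\ref{Sratio1over2}, for which you propose no general substitute. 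In short, your outline is consistent with the heuristic the authors themselves give (Thurston's volume comparison between complete and incomplete structures), but every load-bearing step --- the uniform upper bound off $U$, the geometric identification of $v(s)$, and the exclusion of non-geometric saddles --- remains unproven, so the statement stands as a conjecture rather than a theorem.
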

With Conjecture~\ref{s1dom}, our approach of finding the AEF of the TV invariant can be formulated as the following conjecture:

\begin{conjecture}
In the context of Conjecture~\ref{paravc} and~\ref{s1dom}, for any hyperbolic knot $K$, we can find a function $\Phi(s,z) : D \subset U \times \CC \to \CC$ holomorphic in $z$ such that
\begin{enumerate}
\item the holomorphic function $\Phi(s,z)$ recovers the potential functions introduced in Conjecture~\ref{paravc}, i.e.
$$ \Phi\left( \frac{M}{N+1/2}, z \right) = \Phi^{(s)}_M(z) \quad \text{and}\quad \Phi(1,z) = \Phi^{(1)}_\infty(z);$$
\item there exists a smooth choice of non-degenerate saddle points $z(s)$ such that for each $s \in D$,
$$ \frac{d}{dz} \Phi(s, z(s)) = 0  \quad \text{and} \quad \frac{d^2}{dz^2} \Phi(s, z(s)) \neq 0;$$
\item the AEF of the Turaev-Viro invariants is given by
\begin{align*}
TV_r (K)
&\hspace{8pt} = \hspace{4pt}  TV_r (\mathbb{S}^3 \backslash K, e^{\frac{2\pi i}{r}} ) \\
&\hspace{8pt} = \hspace{4pt} (\eta'_r)^2 \sum_{1 \leq M \leq N} \left| \tilde{J}_M \left(K, e^{\frac{2\pi i}{N+1/2}} \right) \right|^2 \\
&\stackrel[N \to \infty]{\sim}{ }  (\eta'_r)^2 \sum_{M, s \in U} \left| \tilde{J}_M \left(K, e^{\frac{2\pi i}{N+1/2}} \right) \right|^2 \\
&\stackrel[N \to \infty]{\sim}{ }  \left(\frac{1}{4} \right) \left( \frac{r}{2} \right)^{1/2} \left|T(K) \right|^{3/2} \exp \left(\frac{ r}{2\pi } \times  (\Vol(\mathbb{S}^3 \backslash K) \right),
\end{align*}
where $T(K)$ and $\Vol(K)$ are the twisted Reidemeister torsion and the hyperbolic volume associated with the unique complete hyperbolic structure of $\mathbb{S}^3 \backslash K$ respectively.
\end{enumerate}
\end{conjecture}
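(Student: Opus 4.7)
The plan is to mirror the analytical pipeline carried out for $4_1$ in Theorem~\ref{mainthm4} and to treat the three items of the conjecture in order. Starting from Theorem~\ref{relationship}, which expresses $TV_{r}(\SS^{3} \backslash K, e^{2\pi i / r})$ as $(\eta'_{r})^{2}\sum_{1 \le M \le N}|J_{M}(K, e^{2\pi i/(N+1/2)})|^{2}$, the first reduction is to invoke Conjecture~\ref{s1dom} to discard all summands with $s \notin U$. The task then splits into two analytic problems: asymptotic evaluation of each $|J_{M}|^{2}$ in a one-parameter family via the potential $\Phi(s,z)$, followed by a Laplace-type summation over the parameter $s$.

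For items~(1) and~(2), the construction of $\Phi(s,z)$ would begin from a quantum dilogarithm / state-integral expression for $J_{M}(K,q)$ valid for a general hyperbolic $K$ (for $4_{1}$ this is the $R$-matrix sum used implicitly in the proof of Theorem~\ref{mainthm2}). Rewriting the exponent of such an integral as $(N+\tfrac{1}{2})\,\widetilde{\Phi}^{(s)}_{M}(z)$ with $s = M/(N+\tfrac{1}{2})$ and letting $N \to \infty$ while keeping $s$ fixed should produce a limit $\Phi(s,z)$ that is holomorphic in $z$ and smooth in $s$. The saddle equation $\partial_{z}\Phi(s,z) = 0$ is expected to cut out a deformation of the $A$-polynomial curve of $K$; Mostow rigidity together with the implicit function theorem then furnishes, on a small neighbourhood $U$ of $s=1$, a smooth non-degenerate branch $z(s)$ that specializes at $s=1$ to the geometric point for the complete hyperbolic structure, at which $\Phi(1,z(1)) = \Vol(K) + i\,\CS(K)$.

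For item~(3), Theorem~\ref{FSA} applied to each $J_{M}$ with $s \in U$, together with Conjecture~\ref{paravc}, yields an asymptotic for $|J_{M}|^{2}$ with exponent $(N+\tfrac{1}{2})\,\mathcal{V}(s)$, where $\mathcal{V}(s) := 2\Re\bigl[\Phi(s,z(s))/(2\pi i)\bigr]$ is the hyperbolic volume of the $s$-deformed cone manifold. Since $\mathcal{V}$ is real-analytic on $U$ and, by Thurston's inequality on cone-angle deformations, attains its strict maximum $\Vol(\SS^{3}\backslash K)$ at $s=1$, the Riemann sum over $M$ is well approximated by a Gaussian integral over $s$ localized at $s=1$. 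Carrying out Laplace's method in $s$ yields an extra $(2\pi/(N+\tfrac{1}{2}))^{1/2}$ factor involving $\partial_{s}^{2}\mathcal{V}(1)^{-1/2}$ which, together with the Hessian $\partial_{z}^{2}\Phi$ at the complete structure, should recombine through the Neumann--Zagier potential identification into the factor $|T(K)|^{3/2}$. Balancing this against $(\eta'_{r})^{2} = 4\sin^{2}(2\pi/r)/r$ then reproduces the predicted $\bigl(\pi^{5/2}/4\bigr)\bigl(r/(2\pi)\bigr)^{1/2}|T(K)|^{3/2}\exp\bigl(r\Vol(\SS^{3}\backslash K)/(2\pi)\bigr)$.

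The main obstacle is the analytic foundation required for general $K$. Establishing the existence of $\Phi(s,z)$ and of a smooth non-degenerate branch $z(s)$ demands uniform control of quantum dilogarithm sums away from their poles, which is essentially the technical content missing from the classical volume conjecture for knots beyond the figure eight and a handful of others. A second, equally substantial, obstacle is Conjecture~\ref{s1dom} itself, which requires effective upper bounds for $|J_{M}|$ outside $U$ generalizing Lemma~\ref{uppbd1} and Theorem~\ref{mainthm3}; verifying that such bounds are strictly dominated by the $s=1$ exponential rate rests on Thurston's rigidity for cone-manifold volumes, and translating that geometric input into a quantitative combinatorial inequality on the state sum is where the heavy lifting will lie.
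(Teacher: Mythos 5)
This statement is one of the paper's closing conjectures: the paper offers no proof of it, only the figure-eight case (Theorem~\ref{mainthm4}) as motivating evidence, so there is no ``paper proof'' for your proposal to match. What you have written is a roadmap that mirrors the $4_1$ argument, and you yourself defer exactly the steps that constitute the content of the conjecture: the existence of $\Phi(s,z)$ with a smooth non-degenerate saddle branch $z(s)$ for a general hyperbolic $K$ (items (1)--(2)), the domination statement Conjecture~\ref{s1dom}, and the identification of the combined Hessian and prefactor data with $|T(K)|^{3/2}$. In the paper that last identification is not a structural ``Neumann--Zagier recombination'' but a computation specific to $4_1$: one checks $z'(1)=0$, $\Re\Theta''(1)=-2\sqrt{3}\pi$ and $\Xi(1)=2\pi\sqrt{3}$ by hand, and the coefficient $\left(\frac{\pi^{5/2}}{4}\right)|2/\sqrt{-3}|^{3/2}$ emerges from assembling Lemmas~\ref{Sratio1} and~\ref{Sratio1over2} with Proposition~\ref{Rietoint} and Laplace's method. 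Asserting that this ``should recombine'' into $|T(K)|^{3/2}$ for general $K$ is a restatement of item (3), not an argument for it.

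There is also a concrete error in your localization step. You justify discarding the contribution away from $s=1$ by a strict exponential gap coming from Thurston's volume inequality, but already for $4_1$ no such gap exists: Lemma~\ref{uppbd1} shows the maximal growth rate $\frac{\Vol(\SS^3\backslash 4_1)}{4\pi}$ is attained both at $s=1$ (with $2k_d=\tfrac{5}{6}$) and at $s=\tfrac12$ (with $2k_d=\tfrac13$), and indeed the two blocks (\ref{s1}) and (\ref{s1over2}) in the proof of Theorem~\ref{mainthm4} have the \emph{same} exponential rate $\exp\left((2N+1)\frac{\Vol(\SS^3\backslash 4_1)}{2\pi}\right)$; the $s\sim 1$ block wins only through the polynomial prefactors coming from the quantum dilogarithm ratios ($N+\tfrac12$ from Lemma~\ref{Sratio1} versus $\tfrac12$ from Lemma~\ref{Sratio1over2}). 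So a Laplace argument in $s$ ``localized at $s=1$ by volume rigidity'' would fail as stated; any honest attack on the conjecture must control these subleading-in-exponent but equal-rate contributions, which is precisely why the paper needs the separate upper-bound analysis of Theorem~\ref{mainthm3} near $s=\tfrac12$ and why Conjecture~\ref{s1dom} is singled out as an independent hypothesis rather than a consequence of Thurston's inequality.
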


\subsection{Organization}

In Section 2 we will outline the proof of the main theorems. In order to focus on the key ideas, the proofs of the technical statements will be collected in Section 3.

\section{Proof Outline of the Main Theorem}\label{sec2}

This section is divided into three parts. The first part aims to prove Theorem~\ref{mainthm1} and illustrate the techniques used in \cite{WA17} and \cite{HM13}. We will show the AEF for the colored Jones polynomials at $(M+a)$-th root of unity with fixed $a\geq 0$. The AEF will then be generalized to the case where $a>0$ satisfies some limiting relation with $M$. This gives the proof of Theorem~\ref{mainthm2}. After that we prove Lemma~\ref{mainlemma}. Finally, we apply the AEF's obtained in part two to prove Theorem~\ref{mainthm4}.

\subsection{AEF for the colored Jones polynomials around $(M+a)$-th root of unity with fixed $a\geq 0$}

Let $a\geq 0$ be fixed. We are going to consider the asymptotic behavior of the $M$-th colored Jones polynomials around $(M+a)$-th root of unity, i.e. $q = \exp(\frac{2\pi i + u}{M+a})$ with $0 \leq u < \log((3+\sqrt{5})/2)$.

Recall that the formula of colored Jones polynomials, the definition of quantum dilogarithm and its functional equation are given as follows:
\begin{enumerate}
\item For $M \in \NN$, \begin{align*} 
 J_{M}(4_{1};q) &=  1+\sum_{k=1}^{M-1} \prod_{l=1}^{k}\left(q^{\frac{M-l}{2}} - q^{-\frac{M-l}{2}} \right)\left(q^{\frac{M+l}{2}} - q^{-\frac{M+l}{2}} \right) \\
 &= 1+\sum_{k=1}^{M-1} q^{-k M} \prod_{l=1}^{k}\left(1-q^{M-l}\right)\left(1-q^{M+l}\right)\,
 \end{align*}
 \item Fix $\gamma \in \CC$ with $\Re(\gamma)>0$. Then for any $|\operatorname{Re}(z)|< \pi + \operatorname{Re}(\gamma)$, the quantum dilogarithm function is defined to be
$$S_{\gamma}(z) = \exp\left(\frac{1}{4}\int_{C_{R}}\frac{e^{zt}}{\sinh(\pi t)\sinh(\gamma t)}\frac{dt}{t}\right)  , $$
where $C_R = (-\infty, -R] \cup \Omega_R \cup [R, \infty)$ with $\Omega_R = \{ R e^{i (\pi -t)}\text{ $|$ }0\leq s \leq \pi \}$ for $0<R<\min \{\pi / |\gamma|, 1\}$.
\item For $| \operatorname{Re}(z) |  < \pi$, the quantum dilogarithm satisfies the functional equation:
$$(1+e^{iz})S_{\gamma}(z+\gamma)=S_{\gamma}(z-\gamma) $$
\end{enumerate}

Using the functional equation of the quantum dilogarithm, one may extend the definition of quantum dilogarithm to any complex number $z$ with
$$\Re(z) \neq \pi + 2m\Re(\gamma) \text{ and } \Re(z) \neq -\pi - 2m'\Re(\gamma)$$ for any $m,m' \in \NN$.

Now we are going to obtain the AEF of the colored Jones polynomials at $(M+a)$-th root of unity with $a>0$, $a \notin \NN$. In fact we are going to find out the AEF around the root of unity, i.e. $q=\exp(\frac{2\pi i + u}{M+a})$. Then by taking $u=0$ we can get our desired result.

Applying the functional equation of the quantum dilogarithm with the values
$$\ds \gamma=\frac{2\pi - iu}{2(M+a)}\,,\quad\xi = 2\pi i + u \quad\text{and}\quad z=\pi - iu -2(l+a)\gamma $$
and observing that $\ds \frac{\xi}{M+a}=2i\gamma $, we have
\begin{align}\label{F1}
\prod_{l=1}^{k}\left(1-e^{\frac{M-l}{M+a}\xi}\right) = \frac{S_{\gamma}(\pi - iu - (2(k+a)+1)\gamma)}{S_{\gamma}(\pi - iu - (2a+1)\gamma)}
\end{align}

Similarly, putting $z=-\pi -iu +2(l-a)\gamma$, we have
\begin{align}\label{F2}
\prod_{l=1}^{k}\left(1-e^{\frac{M+l}{M+a}\xi}\right) = \frac{S_{\gamma}(-\pi - iu + (1-2a)\gamma )}{S_{\gamma}(-\pi - iu + (2(k-a)+1)\gamma)}
\end{align}

Furthermore, we split the colored Jones polynomials into two parts:
\begin{align*}
 J_{M}(4_{1};q)
= &1+\sum_{k=1}^{M-1} q^{-k M} \prod_{l=1}^{k}\left(1-q^{M-l}\right)\left(1-q^{M+l}\right) \\
= &\left[ 1 +  \sum_{k=1}^{\lceil a \rceil -1} q^{-k M} \prod_{l=1}^{k}\left(1-q^{M-l}\right)\left(1-q^{M+l}\right) \right]\\
&\qquad+  \sum_{k=\lceil a \rceil }^{M-1} q^{-k M} \prod_{l=1}^{k}\left(1-q^{M-l}\right)\left(1-q^{M+l}\right)
\end{align*}

Overall from (\ref{F1}) and (\ref{F2}) we have

\begin{align}\label{Jsplit}
 J_{M}(4_{1};q)
= &\left[ 1 +  \sum_{k=1}^{\lceil a \rceil -1} q^{-k M} \prod_{l=1}^{k}\left(1-q^{M-l}\right)\left(1-q^{M+l}\right) \right]\notag\\
&\qquad+ \frac{S_{\gamma}(-\pi -iu - (2a-1)\gamma)}{S_{\gamma}(\pi -iu - (2a+1)\gamma)}\notag\\
&\qquad\qquad\qquad\times  \sum_{k=\lceil a \rceil }^{M-1} e^{-\frac{kM\xi}{M+a}} \frac{S_{\gamma}( \pi -iu - (2k+2a+1)\gamma)}{S_{\gamma}(-\pi -iu + (2k-2a+1)\gamma)}
\end{align}

Define
\begin{align*}
g_{M}(z) = \exp\left(-(M+a)(u-\frac{a\xi}{M+a})z\right) \frac{S_{\gamma}(\pi - iu + i\xi z  + i\xi (\frac{a}{M+a}))}{S_{\gamma}(-\pi - iu - i \xi z + i\xi (\frac{a}{M+a}))}
\end{align*}

Now we want to find an integral expression for $J_{M}(4_1; q)$ by using residue theorem.
Since $S_{\gamma}(z)$ is defined for $|\operatorname{Re}(z)|<\pi + \operatorname{Re}(\gamma)$ and $\operatorname{Re}(\gamma)>0$, one may check that $g$ is well-defined and analytic on the domain (i.e. open, connected) $D$ where
\begin{align*}
&D \\
=&\left\{
x+iy \in \mathbb{C}~\left|
\begin{array}{c}
- 2\pi \left( x+\frac{a}{M+a} \right) - \operatorname{Re}(\gamma)< uy < 2\pi\left(1-\left( x+\frac{a}{M+a} \right) \right) +\operatorname{Re}(\gamma) \\
- 2\pi \left( x-\frac{a}{M+a} \right) -\operatorname{Re}(\gamma)< uy < 2\pi\left(1-\left( x-\frac{a}{M+a} \right) \right) +\operatorname{Re}(\gamma)  \\
\end{array}\right.\right\}\\
&\\
=&\left\{
  x+iy \in \mathbb{C}~\left| \scalebox{1.05}{$- 2\pi \left( x-\frac{a}{M+a} \right) - \operatorname{Re}(\gamma)< uy <2\pi\left(1-\left( x+\frac{a}{M+a} \right) \right) +\operatorname{Re}(\gamma)$} \right.\right\}
\end{align*}

Next, let $\epsilon =\dfrac{2a+\frac{1}{2}}{2(M+a)}$. Consider the contour $C(\epsilon) = C_{+}(\epsilon)\cup C_{-}(\epsilon)$ with the polygonal lines $C_{\pm}(\epsilon)$ defined  by
\begin{align*}
C_{+}(\epsilon)&:\hspace{3em} 1-\epsilon \rightarrow 1- \frac{u}{2\pi} -\epsilon +i \rightarrow -\frac{u}{2\pi} + \epsilon +i \rightarrow \epsilon \\
C_{-}(\epsilon)&:\hspace{3em} \epsilon \rightarrow \epsilon + \frac{u}{2\pi} -i \rightarrow 1 - \epsilon+\frac{u}{2\pi} -i \rightarrow 1-\epsilon
\end{align*}
Note that for $k=\lceil a \rceil,\lceil a \rceil+1,\lceil a-1 \rceil+2, \dots, M-1$, the singularities $\dfrac{2k+1}{2(M+a)}$ of the function $z \mapsto\tan((M+a)\pi z)$  lie in $D$. This is the reason why we need to split $J_{M}(4_{1};q)$ into two parts. Using Residue Theorem, we may express the colored Jones polynomials as
\begin{align}\label{Jint1}
&J_{M}(4_{1};q)
= \left[ 1 +  \sum_{k=1}^{\lceil a \rceil -1} q^{-k M} \prod_{l=1}^{k}\left(1-q^{M-l}\right)\left(1-q^{M+l}\right) \right] + \notag\\
&\frac{S_{\gamma}(-\pi -iu - (2a-1)\gamma)}{S_{\gamma}(\pi -iu - (2a+1)\gamma)} \frac{(M+a)i\exp(\frac{u}{2}-\frac{a\xi}{2(M+a)})}{2} \int_{C(\epsilon)} \tan\left( (M+a)\pi z \right)g_{M} (z) dz
\end{align}

Note that as $M$ goes to infinity, the first part of $J_M (4_1;q)$ grows at most polynomially. So it suffices to consider the large $M$ behavior of the second part. In order to estimate the integral, let
$$ G_{\pm}(M,\epsilon) = \displaystyle
\int_{C_{\pm}(\epsilon)}\tan((M+a)\pi z)g_{M}(z)dz\,.$$

Then one may rewrite
\begin{align}\label{Jint2}
&J_{M}(4_{1};q)
= \left[ 1 +  \sum_{k=1}^{\lceil a \rceil -1} q^{-k M} \prod_{l=1}^{k}\left(1-q^{M-l}\right)\left(1-q^{M+l}\right) \right] \notag \\
&+ \frac{S_{\gamma}(-\pi -iu - (2a-1)\gamma)}{S_{\gamma}(\pi -iu - (2a+1)\gamma)}\times \frac{(M+a)i\exp(\frac{u}{2}-\frac{a\xi}{2(M+a)})}{2}(G_{+}(M,\epsilon) + G_{-}(M,\epsilon))
\end{align}
The integral in $G_{\pm}$ may be splited by adding and subtracting the same term as follows,
\begin{align*}
 G_{\pm}(M,\epsilon) = \pm i \int_{C_{\pm}(\epsilon)} g_{M}(z)dz + \int_{C_{\pm}(\epsilon)} (\tan((M+a)\pi z) \mp i)g_{M}(z) dz
\end{align*}
The second term can be controlled by the following analogue of Proposition 2.2 in \cite{WA17}.
\begin{proposition}\label{tanapp}
There exists a constant $K_{1,\pm}$ independent of $M$ and $\epsilon$ such that
$$ \left|   \int_{C_{\pm}(\epsilon)} (\tan((M+a)\pi z) \pm i)g_{M}(z) dz \right| < \frac{K_{1,\pm}}{M+a}\,.$$
\end{proposition}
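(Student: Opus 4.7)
The plan is to bound the integrand pointwise along $C_{\pm}(\epsilon)$, exploiting the explicit exponential decay of $\tan(w)\mp i$ as $\pm\operatorname{Im}(w) \to \infty$, and then to integrate. First I would use the identities
$$\tan(w) - i = \frac{-2ie^{2iw}}{1+e^{2iw}}, \qquad \tan(w) + i = \frac{2ie^{-2iw}}{1+e^{-2iw}},$$
so that the factor $\tan((M+a)\pi z)\mp i$ carries the exponential $e^{\pm 2i(M+a)\pi z}$. Along $C_{+}(\epsilon) \subset \{\operatorname{Im}(z)\geq 0\}$, one has $|e^{2i(M+a)\pi z}| = e^{-2(M+a)\pi\operatorname{Im}(z)}$, and the choice $\epsilon = (2a+\tfrac{1}{2})/(2(M+a))$ keeps the contour at a fixed positive distance from the zeros $z = (2n+1)/(2(M+a))$ of $1+e^{2i(M+a)\pi z}$, yielding
$$|\tan((M+a)\pi z) - i| \leq C_{1}\, e^{-2(M+a)\pi\operatorname{Im}(z)}$$
uniformly in $M$ and $\epsilon$, with the analogous bound on $C_{-}(\epsilon)$.

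Next I would analyze $|g_{M}(z)|$ along the contour. The explicit exponential prefactor has modulus $\exp(-(M+a)u\operatorname{Re}(z))\cdot O(1)$, which is at worst of size $\exp((M+a)u^{2}/(2\pi))$ since $\operatorname{Re}(z) \geq -u/(2\pi)$ on the contour. For the quantum dilogarithm ratio in $g_{M}$, I would apply the standard uniform asymptotic
$$\log S_{\gamma}(w) = \frac{1}{2i\gamma}\Li(-e^{iw}) + O(\gamma) \quad \text{as } \gamma \to 0,$$
valid on compact subsets of the domain of $S_{\gamma}$. Near the real-axis endpoints of $C_{\pm}(\epsilon)$, where one argument of $S_{\gamma}$ is within $O(1/(M+a))$ of the boundary of its domain, I would apply the functional equation $(1+e^{iz})S_{\gamma}(z+\gamma) = S_{\gamma}(z-\gamma)$ to shift the argument into a compact region where the asymptotic is valid. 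The key point, whose verification is the analytic heart of the argument, is that after this bookkeeping the product $|(\tan((M+a)\pi z) - i)\,g_{M}(z)|$ is bounded by $C\exp(-(M+a)\alpha\operatorname{Im}(z))$ on $C_{+}(\epsilon)$ for some $\alpha \geq 2\pi - u^{2}/(2\pi) > 0$, this positivity being guaranteed by $u < \log((3+\sqrt{5})/2) < 2\pi$.

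Granted this combined bound, I parameterize each slanted segment of $C_{+}(\epsilon)$ by $t \in [0,1]$ so that $\operatorname{Im}(z(t)) = t$ or $1-t$; then
$$\int_{0}^{1} C\, e^{-(M+a)\alpha t}\, dt = \frac{C\bigl(1-e^{-(M+a)\alpha}\bigr)}{(M+a)\alpha} = O\bigl(1/(M+a)\bigr).$$
On the horizontal top segment at $\operatorname{Im}(z)=1$, the integrand has size $O(e^{-(M+a)\alpha})$, absorbed into $O(1/(M+a))$. Summing the three pieces yields the desired bound; the case of $C_{-}(\epsilon)$ is symmetric, using $\tan + i$ in place of $\tan - i$.

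The hard part will be verifying the combined pointwise bound on $|(\tan((M+a)\pi z) - i)\,g_{M}(z)|$, which requires delicate control of the quantum dilogarithm ratio in $g_{M}$ near the real-axis endpoints of $C_{\pm}(\epsilon)$ and a careful balancing of the decay of $\tan - i$ against the possible exponential growth of $|g_{M}|$ coming from both the explicit prefactor and the $S_{\gamma}$ asymptotic. This is essentially the same mechanism as in Proposition~2.2 of \cite{WA17}, and the proof here is obtained by adapting that argument to the present setting with $a \geq 0$.
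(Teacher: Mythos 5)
Your overall skeleton is the same as the paper's (and as Proposition~2.2 of \cite{WA17}): exponential decay of $\tan((M+a)\pi z)\mp i$ in $\operatorname{Im}(z)$ along $C_{\pm}(\epsilon)$, a pointwise bound on $|g_M|$ via the quantum dilogarithm asymptotics, and then integration giving $O(1/(M+a))$. But the step you yourself flag as ``the analytic heart'' is exactly the content of the proposition, and what you offer in its place is not correct as stated. Writing $g_M(z)=\exp\bigl((M+a)\tilde{\Phi}_M(z)\bigr)\exp\bigl(I_\gamma(\cdot)-I_\gamma(\cdot)\bigr)$, the $S_\gamma$ ratio contributes an exponent of size $\Theta(M+a)$ through the $\frac{1}{2i\gamma}\Li(-e^{iw})$ term, not an $O(1)$ or $O(\gamma)$ correction; your provisional estimate $\alpha\ge 2\pi-u^2/(2\pi)$ accounts only for the elementary prefactor $\exp(-(M+a)(u-\tfrac{a\xi}{M+a})z)$ and silently assumes the dilogarithm part of the exponent is harmless. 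A priori $\operatorname{Re}\tilde{\Phi}_M$ can be positive and of order the hyperbolic volume (indeed $\operatorname{Re}\tilde{\Phi}_M(z_M)>0$ at the saddle point, by Proposition~\ref{positive}), so a bound of the form $C\exp(-(M+a)\alpha\operatorname{Im}(z))$ does not follow from what you prove; in particular near the real-axis endpoints of the slanted segments, where $\operatorname{Im}(z)$ is small, your ``granted this combined bound'' integration step has no support.

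What actually closes the gap in the paper is: (i) bound the $I_\gamma$-terms by a constant using Lemma~3/Lemma~6.1 of \cite{AH06} (valid for $|\operatorname{Re}(z)|\le\pi$, so no functional-equation shifting is needed), so that $|g_M(z)|\le K\exp\bigl((M+a)\operatorname{Re}\tilde{\Phi}_M(z)\bigr)$; (ii) import from the proof of (6.2) in \cite{HM13} that $\operatorname{Re}\Phi\bigl((\tfrac{u}{2\pi}+i)t+\epsilon\bigr)<0$ along the slanted segments for small $\epsilon$; and (iii) transfer this negativity to $\tilde{\Phi}_M$ for large $M$ (and, for Proposition~\ref{tanapp2}, for $s$ close to $1$) using the uniform convergence $\tilde{\Phi}_M\to\Phi_0$ and $\tilde{\Phi}^{(s)}_0\to\tilde{\Phi}_0$. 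With $|g_M|\le K$ on those segments, the bound $|\tan((M+a)\pi z)\mp i|\le \frac{2e^{-2(M+a)\pi t}}{1-e^{-\pi^2/u}}$ integrates to $O(1/(M+a))$, and the top/bottom horizontal segments are handled by the factor $e^{-2(M+a)\pi}$ beating the at-most-volume-sized growth of $g_M$. Your proposal needs step (ii)--(iii), or some substitute proof that $\operatorname{Re}\tilde{\Phi}_M\le 0$ on the relevant segments, before the pointwise bound and hence the conclusion can be asserted.
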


To approximate $g_{M}$, define a function
$$\tilde{\Phi}_{M}(z)=\frac{1}{\xi}\left[\operatorname{Li_{2}}\left(e^{u-\left(z+\frac{a}{M+a}\right)\xi}\right) -
 \operatorname{Li_{2}}\left(e^{u + \left(z - \frac{a}{M+a}\right)\xi}\right)\right] - \left(u - \frac{a\xi}{M+a}\right)z $$

Since $\operatorname{Li_{2}}$ is analytic in $\mathbb{C}\backslash [1,\infty)$, by considering the region where
$$\im \left(u - \left(z + \frac{a}{M+a}\right)\xi \right), \im \left(u + \left(z - \frac{a}{M+a}\right)\xi \right)  \in (-2\pi, 0 ),$$
 one may verify that the function $\Phi_{M}(z)$ is analytic in the region
$$ D' = \left\{ x+iy \in \mathbb{C} \left| -2\pi \left(x - \frac{a}{M+a} \right) < uy < 2\pi \left(1-\left(x+\frac{a}{M+a}\right)\right) \right.\right\} \subset D$$

Note that the contour $C(\epsilon)$ and the poles of $\tan((M+a)\pi z)$ lie inside $D'$. The following result is an analogue of Proposition 2.3 in \cite{WA17}.

\begin{proposition}\label{exprepn}
Let $p(\epsilon)$ be any contour in the parallelogram bounded by $C(\epsilon)$ connecting from~$\epsilon$ to~$1-\epsilon$, then there exists a constant $K_{2}>0$ independent of $M$ and $\epsilon$ such that
\begin{align*}
\lefteqn{ \left| \int_{p(\epsilon)} g_{M}(z) dz - \int_{p(\epsilon)} \exp( (M+a) \tilde{\Phi}_{M}(z)) dz \right| \,\leq }\\
&\qquad\qquad \frac{K_{2} \log (M+a)}{M+a}
\max_{\omega \in p(\epsilon)} \left\{ \exp ((M+a) \operatorname{Re} \tilde{\Phi}_{M}(z)) \right\}
\end{align*}
\end{proposition}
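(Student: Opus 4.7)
\medskip\noindent\textbf{Proof proposal for Proposition~\ref{exprepn}.} The plan is to compare $g_M(z)$ with $\exp((M+a)\tilde{\Phi}_M(z))$ pointwise via the classical asymptotic expansion of the Faddeev quantum dilogarithm $S_\gamma$, and then integrate the resulting error along $p(\epsilon)$. The starting point is the expansion
\[
\log S_\gamma(w) \;=\; \frac{1}{2i\gamma}\operatorname{Li}_2(-e^{iw}) + R_\gamma(w),
\]
valid for $w$ in compact subsets of the strip $|\Re(w)|<\pi+\Re(\gamma)$, with $|R_\gamma(w)|=O(\gamma)$ uniformly on each such compact set. This is derived from the contour integral defining $S_\gamma$ by expanding $1/\sinh(\gamma t) = 1/(\gamma t) + O(\gamma t)$ for small $\gamma t$, recognizing the leading term as the integral representation of $\operatorname{Li}_2(-e^{iw})$, and carefully bounding the residual integrand.

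Inserting this expansion into the two $S_\gamma$ factors in the definition of $g_M(z)$ and using $\gamma=\xi/(2i(M+a))$, the exponential prefactor together with the two dilogarithms assemble exactly into $(M+a)\tilde{\Phi}_M(z)$; the algebra here is the same identification already used in defining $\tilde{\Phi}_M$. What remains is the error
\[
E_M(z) \;=\; R_\gamma\!\bigl(\pi-iu+i\xi z+ i\xi \tfrac{a}{M+a}\bigr) \,-\, R_\gamma\!\bigl(-\pi-iu-i\xi z+i\xi \tfrac{a}{M+a}\bigr),
\]
so that $g_M(z) = \exp\!\bigl((M+a)\tilde{\Phi}_M(z)+E_M(z)\bigr)$. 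Writing
\[
g_M(z) - e^{(M+a)\tilde{\Phi}_M(z)} \;=\; e^{(M+a)\tilde{\Phi}_M(z)}\bigl(e^{E_M(z)}-1\bigr)
\]
and using $|e^{E_M}-1|\le |E_M|e^{|E_M|}$, the integrand in the difference is bounded pointwise by a constant multiple of $|E_M(z)|\cdot\exp((M+a)\Re\tilde{\Phi}_M(z))$.

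The contour $p(\epsilon)$ sits inside a fixed parallelogram of bounded length, so it suffices to bound $|E_M(z)|$ uniformly. On the portion of $p(\epsilon)$ lying at a fixed positive distance from the vertical edges (where the arguments of $S_\gamma$ stay away from $\pm(\pi+\Re(\gamma))$), the estimate $|R_\gamma|=O(\gamma)$ gives $|E_M(z)|=O(1/(M+a))$, contributing $O(1/(M+a))\cdot\max_{p(\epsilon)}\exp((M+a)\Re\tilde{\Phi}_M)$ to the integral. Near the two endpoints $z=\epsilon$ and $z=1-\epsilon$ the arguments approach the singular boundary of $S_\gamma$; a local refinement of the expansion shows $|E_M(z)|=O\bigl(1/\bigl((M+a)\,\mathrm{dist}(z,\{\epsilon,1-\epsilon\})\bigr)\bigr)$ on windows of width $O(1/(M+a))$ about each endpoint, so integrating this singular behaviour produces the logarithmic factor $\log(M+a)/(M+a)$. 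Summing the interior and endpoint contributions yields the stated bound with a constant $K_2$ depending only on the parallelogram and on $u$, not on $M$, $\epsilon$, or the particular path $p(\epsilon)$.

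The main obstacle is the uniform control of the remainder $R_\gamma$ near the boundary $\Re(w)=\pm(\pi+\Re(\gamma))$: the naive estimate $|R_\gamma|=O(\gamma)$ degrades there, and one must track the precise rate of degradation to justify that the endpoint contributions are bounded by $\log(M+a)/(M+a)$ rather than a polynomial factor. This hinges on an explicit subleading term in the quantum-dilogarithm expansion, which can be extracted either directly from the contour integral defining $S_\gamma$ or, alternatively, from the functional equation $(1+e^{iw})S_\gamma(w+\gamma)=S_\gamma(w-\gamma)$ applied finitely many times to shift $w$ away from the singular strip. Once this refined bound is in place, the rest of the argument is a routine application of the triangle inequality along $p(\epsilon)$.
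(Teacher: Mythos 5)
Your strategy is essentially the paper's: factor $g_M(z)=\exp\bigl((M+a)\tilde\Phi_M(z)\bigr)\exp(E_M(z))$, where $E_M$ is the difference of the two quantum-dilogarithm remainders, pull out $\max\exp((M+a)\Re\tilde\Phi_M)$, and bound $\int_{p(\epsilon)}|e^{E_M}-1|$. What you flag as the main obstacle does not need to be re-derived: the required degradation-controlled remainder bound is exactly Lemma 3 of Andersen--Hansen \cite{AH06}, $|I_\gamma(w)|\le A\bigl(\tfrac{1}{\pi-\Re w}+\tfrac{1}{\pi+\Re w}\bigr)|\gamma|+B(1+e^{-\Im(w)R})|\gamma|$, which the paper simply quotes and converts into $|E_M(t)|\le A'''|\gamma|\bigl(\tfrac1t+\tfrac1{1-t}\bigr)$ along the segment from $\epsilon$ to $1-\epsilon$ (after deforming the contour there by analyticity, which also disposes of your unjustified assumption that an arbitrary $p(\epsilon)$ has bounded length), and then integrates the exponential series termwise following \cite{AH06}, p.~537.

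The genuine gap is your accounting of where the factor $\log(M+a)$ comes from. You split $p(\epsilon)$ into an interior part at fixed positive distance from the vertical edges, where $|E_M|=O(1/(M+a))$, and endpoint windows of width $O(1/(M+a))$, where you claim $|E_M|=O\bigl(1/((M+a)\operatorname{dist}(z,\{\epsilon,1-\epsilon\}))\bigr)$ and assert that integrating this yields the log. Read literally, $\int 1/\operatorname{dist}$ down to $\operatorname{dist}=0$ diverges; and with the correct cutoff (the singular lines are where the arguments of $S_\gamma$ hit $\pm\pi$, i.e.\ near $x=0,1$, and the endpoints sit at distance $\epsilon=\tfrac{2a+1/2}{2(M+a)}\gtrsim|\gamma|$ from them) one only gets $|E_M|=O(1)$ on those windows, so they contribute $O(1/(M+a))$ with no logarithm. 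The logarithm is produced precisely in the region your two cases omit: distances from the edges between $c/(M+a)$ and a fixed constant, where $|E_M(t)|\lesssim|\gamma|\bigl(\tfrac1t+\tfrac1{1-t}\bigr)$ and $\int_{\epsilon}^{1/2}|\gamma|\,\frac{dt}{t}\asymp\frac{\log(M+a)}{M+a}$. You also need the uniform bound $|E_M|=O(1)$ along the whole contour to control the factor $e^{|E_M|}$ in $|e^{E_M}-1|\le|E_M|e^{|E_M|}$ (the paper avoids this by keeping the full series $\sum_{n\ge1}|E_M|^n/n!$ and integrating term by term, which is where the choice of $\epsilon$ relative to $a/(M+a)$ enters via the ratios $f(t)/f(t\pm\tfrac{a}{M+a})\ge\tfrac13$). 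With the region decomposition corrected along these lines, your argument coincides with the paper's proof.
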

Since $\tilde{\Phi}_{M}(z)$ is analytic on $D'$, by Cauchy's theorem
\begin{align*}
\int_{C_{+}(\epsilon)} \exp\left((M+a)\tilde{\Phi}_{M}(z)\right)dz = &-\int_{C_{-}(\epsilon)} \exp\left((M+a)\tilde{\Phi}_{M}(z)\right)dz
\end{align*}

Define a new function $\Phi_M(z)$ by
$$\Phi_{M}(z)=\frac{1}{\xi}\left[\operatorname{Li_{2}}\left(e^{u-\left(z+\frac{a}{M+a}\right)\xi}\right) -
 \operatorname{Li_{2}}\left(e^{u + \left(z - \frac{a}{M+a}\right)\xi}\right)\right] - uz $$

The contour integral can be further expressed as
\begin{align*}
\int_{C_{-}(\epsilon)} \exp\left((M+a)\tilde{\Phi}_{M}(z)\right)dz = \int_{C_{-}(\epsilon)} \exp(a\xi z) \exp\left((M+a)\Phi_{M}(z)\right)dz
\end{align*}

To approximate the above two integrals, we need the following generalized saddle point approximations. The proof is similar to that of Theorem~2.4 at \cite{WA17}.

\begin{theorem}\label{FSA}{\em (One-parameter family version for the saddle point approximations)}
Let $\{ \Phi_{y}(z)\}_{y \in [0,1]}$ be a family of holomorphic functions smoothly depending on $y \in [0,1]$. Let $C(y,t): [0,1]^{2} \to \CC$ be a continuous family of contours with length uniformly bounded above by a fixed constant $L$, such that for each $y \in [0,1]$, $C(y,t)$ lies inside the domain of $\Phi_{y}(z)$, for which $z_{y}$ is the only saddle point along the of contour $C_{y}$ and $\max\operatorname{Re}\left[\Phi_{y}(z)\right]$ is attained at $z_{y}$. Further assume that $\left|\arg\left( \sqrt{-\frac{d^{2}\Phi_{0}}{dz^{2}}(z_{0})}\right)\right| < \pi /4$. Suppose we have an analytic function f(z) along the contour such that $f(z_{y}) \neq 0$ for an $y \in [0,1]$. Then for any sequence $\{y_{M} \}_{M \in \mathbb{N}}$ with $y_{M} \to 0$ as $M \to \infty$, we have the following generalized saddle point approximations:
\begin{align*}
&\int_{C(y_{M},t)} f(z) \exp ((M+a) \Phi_{y_{M}}(z)) dz \\
=  &\sqrt{\frac{2\pi}{(M+a)\left(-\frac{d^{2} \Phi_{y_{M}}}{dz^{2}} (z_{y_{M}})\right)}}\, f(z_{y_M})\exp((M+a) \Phi_{y_{M}}(z_{y_{M}})) \left(1 + O\left(\frac{1}{M+a}\right)\right)
\end{align*}
\end{theorem}

In our case, we have
\begin{align*}
\Phi_{M}(z)=&\frac{1}{\xi}\left[\operatorname{Li_{2}}\left(e^{u-\left(z+\frac{a}{M+a}\right)\xi}\right) -
 \operatorname{Li_{2}}\left(e^{u + \left(z - \frac{a}{M+a}\right)\xi}\right)\right] - uz \\
\Phi_{\infty}(z)=& \lim_{M \to \infty} \Phi_M(z) = \frac{1}{\xi}\left[\operatorname{Li_{2}}\left(e^{u-z\xi}\right) -
 \operatorname{Li_{2}}\left(e^{u + z\xi}\right)\right] - uz
\end{align*}
Note that our function $\Phi_\infty(z)$ is the function $\Phi(z)$ in \cite{HM13}.

\begin{lemma}\label{lemmaFSA}
The curves described in Theorem~\ref{FSA} exist.
\end{lemma}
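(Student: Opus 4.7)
The plan is to produce the contour family by continuous deformation, starting from a known steepest-descent contour at the limit parameter $y=0$. Parametrising the family by $y = a/(M+a)$ and setting
\[
\Phi_y(z) = \frac{1}{\xi}\bigl[\Li(e^{u-(z+y)\xi}) - \Li(e^{u+(z-y)\xi})\bigr] - uz,
\]
we have $\Phi_M = \Phi_{y_M}$ with $y_M \to 0$, and $\Phi_0$ coincides with the potential function appearing in H.~Murakami's proof of Theorem~\ref{asymsu2}. After rescaling $y$ into $[0,1]$, it suffices to construct the contour family on a small interval $[0,y_0]$ containing all $y_M$ for $M$ large.

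First I would locate the saddle points. A direct computation gives $\Phi_0'(z) = \log(1-e^{u-z\xi}) + \log(1-e^{u+z\xi}) - u$, so $\Phi_0'(z_0) = 0$ is equivalent to $(1-e^{u-z_0\xi})(1-e^{u+z_0\xi}) = e^u$. The relevant root $z_0 = \varphi(u)/\xi$ is the one used in \cite{HM13}, where it is shown that $\Phi_0''(z_0) \neq 0$ and $\bigl|\arg(\sqrt{-\Phi_0''(z_0)})\bigr| < \pi/4$. Since $\Phi_y(z)$ is jointly analytic in $(y,z)$ near $(0,z_0)$, the implicit function theorem produces a smooth family of non-degenerate saddle points $z_y$ for $y \in [0,y_0]$; shrinking $y_0$ preserves both non-degeneracy and the argument condition.

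Next I would construct the contour family. Take $C(0,\cdot)$ to be Murakami's steepest-descent contour through $z_0$, which lies in the analyticity domain $D'$ of $\Phi_0$, avoids the poles of $\tan((M+a)\pi z)$, and realises $\max \Re\Phi_0$ at $z_0$. For each $y \in (0, y_0]$, define $C(y,\cdot)$ as the steepest-descent path of $\Phi_y$ through $z_y$, truncated and capped off so that the resulting arc is homotopic, inside $D'$, to the original polygonal contour $C_-(\epsilon)$. Joint analyticity of $\Phi_y$ and smoothness of $y\mapsto z_y$ give continuity of the map $(y,t)\mapsto C(y,t)$, and compactness of $[0,y_0]$ then yields the uniform length bound $L$ required by Theorem~\ref{FSA}.

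The main obstacle is verifying that throughout the deformation the contour stays inside $D'$, clear of both the logarithmic branch cuts of the two dilogarithm terms and the poles of $\tan((M+a)\pi z)$ used in the residue calculation \eqref{Jint1}. These obstructions depend continuously on $y$ and are bounded away from $z_0$ at $y=0$, so after possibly shrinking $y_0$ once more they remain disjoint from $z_y$ and from the chosen steepest-descent contour for every $y\in[0,y_0]$. That $z_y$ is the unique saddle on $C(y,\cdot)$ and the unique maximiser of $\Re\Phi_y$ on that contour then follows from the Morse lemma at the non-degenerate critical point $z_y$, combined with persistence of Murakami's corresponding property at $y=0$; this verifies all the hypotheses of Theorem~\ref{FSA} for the sub-family $\{\Phi_{y_M}\}_M$.
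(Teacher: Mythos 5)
Your overall strategy --- treat the family as a perturbation of Murakami's $y=0$ data and argue that his contour construction persists for small $y$ --- is the same idea the paper uses, but your execution shifts the entire burden of the lemma onto unproved global claims about steepest-descent paths, and that is where the gap lies. Continuity of $y\mapsto z_y$ (which you get from the implicit function theorem, and which the paper gets directly from the explicit quadratic saddle-point equation) only controls the contour \emph{locally} near the saddle. It does not give you: (i) that the steepest-descent path of $\Phi_y$ through $z_y$ stays inside $D'$ and away from the branch cuts of the two dilogarithms --- the global course of a steepest-descent path is not determined by continuity at the saddle; (ii) the uniform length bound required by Theorem~\ref{FSA} --- ``compactness of $[0,y_0]$'' does not apply, since the length of a steepest-descent path is not a continuous (or even a priori finite) function of $y$ before you truncate; and (iii), most importantly, that after you ``truncate and cap off'' the path, $\Re\Phi_y$ still attains its maximum over the \emph{whole} contour only at $z_y$. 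The Morse lemma you invoke is purely local and says nothing about the capping arcs, and ``persistence of Murakami's property at $y=0$'' can only be made precise if you specify the caps concretely and exhibit a uniform strict gap $\Re\Phi_0<\Re\Phi_0(z_0)-\delta$ on fixed compact pieces to which uniform convergence can be applied. As stated, the three properties that constitute the content of Lemma~\ref{lemmaFSA} are asserted rather than proved. (A small factual slip as well: the relevant saddle is $z_0=(\varphi(u)+2\pi i)/\xi$, not $\varphi(u)/\xi$; cf.\ Equation~(3.1) of \cite{HM13} as quoted in the paper.)

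The paper's proof sidesteps exactly these difficulties by not using steepest-descent paths at all: it re-runs the explicit construction of Lemma~3.4 of \cite{HM13} with the perturbed saddle, taking the radial segment $q_M(t)=z^{(s)}_M t$ for $0<t<\Re(1/z^{(s)}_M)$, Murakami's path $Q$, and a closing line segment $L$ to $1$. Each hypothesis of Theorem~\ref{FSA} then reduces to a concrete inequality that holds strictly at the limit ($\Re(1/z^{(1)}_0)>1$, non-degeneracy of $\frac{d^2}{dz^2}\Phi^{(1)}_0(z^{(1)}_0)$, the maximum of $\Re\Phi^{(1)}_0$ on the radial line occurring at the saddle, the argument condition $<\pi/4$, and $\Re\Phi^{(1)}_0\le 0$ on $L$), and is transferred to $\Phi^{(s)}_M$, $z^{(s)}_M$ by the convergences $\Phi^{(s)}_M\to\Phi^{(1)}_0$ and $z^{(s)}_M\to z^{(1)}_0$. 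If you want to salvage your write-up, the cleanest fix is to do the same: fix Murakami's explicit piecewise contour through $z_y$ (rather than a steepest-descent path), for which containment in $D'$, length bounds, and the location of the maximum of $\Re\Phi_y$ are finitely many explicit conditions, each stable under uniform convergence of $\Phi_y$ and its derivatives on compact sets.
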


By Theorem~\ref{FSA} and Lemma~\ref{lemmaFSA}, we have

\begin{theorem}\label{saddlepointapp}{\em (Large $M$ behavior of $\displaystyle\int_{C_{\pm}(\epsilon)}\exp(a\xi z) \exp\left((M+a)\Phi_{M}(z)\right)dz$ )\/}
Let $\ds z_{M}$ be the saddle point of $\ds \Phi_{M}$ inside the contour $C(\epsilon)$. Then
\begin{align*}
&\int_{C_{-}(\epsilon)}\exp(a\xi z) \exp\left((M+a)\Phi_{M}(z)\right)dz \\
\stackrel[M \to \infty]{\sim}{ }\ &  \exp(a\xi z_M)\frac{\sqrt{2\pi}\exp\left((M+a)\Phi_{M}\left(z_{M}\right)\right)}{\sqrt{(M+a)}\sqrt{- \frac{d^{2} \Phi_{M}}{dz^{2}}\left(z_{M}\right)}}
\end{align*}
\end{theorem}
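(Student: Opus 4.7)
The plan is to recognize the integrand $\exp(a\xi z)\exp((M+a)\Phi_M(z))$ as having the form $f(z)\exp((M+a)\Phi_M(z))$ required by Theorem~\ref{FSA}, with $f(z)=\exp(a\xi z)$ being entire and nowhere zero, and then to apply the one-parameter family saddle point approximation directly. To set up the family, I would put $y_M=\dfrac{a}{M+a}$ (so $y_M\to 0$ as $M\to\infty$) and define
\[
\Phi_y(z)=\frac{1}{\xi}\left[\operatorname{Li_{2}}\left(e^{u-(z+y)\xi}\right)-\operatorname{Li_{2}}\left(e^{u+(z-y)\xi}\right)\right]-uz,
\]
so that $\Phi_{y_M}(z)=\Phi_M(z)$ and $\Phi_0(z)=\Phi(z)$. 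This is a holomorphic-in-$z$ family, and smooth (in fact real-analytic) in the parameter $y\in[0,1]$, which is exactly the setup of Theorem~\ref{FSA}.

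Next I would invoke Lemma~\ref{lemmaFSA} to produce a continuous family of contours $C(y,t)$ of uniformly bounded length inside the domain $D'$, along which $z_y$ (the saddle point defined by $\Phi_y'(z_y)=0$) is the unique saddle point and realizes $\max \operatorname{Re}[\Phi_y(z)]$. The lemma also provides the angular condition $\bigl|\arg\sqrt{-\Phi_0''(z_0)}\bigr|<\pi/4$ at the limiting saddle. Since $\Phi_M$ is holomorphic throughout $D'$ and the integrand $e^{a\xi z}\exp((M+a)\Phi_M(z))$ is holomorphic there as well, Cauchy's theorem lets me deform $C_-(\epsilon)$ to $C(y_M,t)$ without changing the value of the integral, provided no singularities of $\Phi_M$ lie between the two contours --- which holds because both are contained in $D'$.

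Applying Theorem~\ref{FSA} with $f(z)=\exp(a\xi z)$ (note $f(z_0)=\exp(a\xi z_0)\neq 0$, so the non-vanishing hypothesis is satisfied, and by continuity $z_{y_M}\to z_0$), I obtain
\[
\int_{C(y_M,t)}\!e^{a\xi z}\exp\!\bigl((M+a)\Phi_{y_M}(z)\bigr)\,dz \;\sim\;\sqrt{\frac{2\pi}{(M+a)\bigl(-\Phi_{y_M}''(z_{y_M})\bigr)}}\;e^{a\xi z_{y_M}}\exp\!\bigl((M+a)\Phi_{y_M}(z_{y_M})\bigr),
\]
which, after identifying $z_{y_M}=z_M$ and $\Phi_{y_M}=\Phi_M$, is precisely the claimed asymptotic. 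The case $C_+(\epsilon)$ is handled identically (or, as used later in the paper, via Cauchy's theorem $\int_{C_+}=-\int_{C_-}$ for the $\tilde\Phi_M$ integrand that relates to $\Phi_M$ through the factor $\exp(a\xi z)$).

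The main obstacle is the verification step that underlies Lemma~\ref{lemmaFSA}: namely, exhibiting an honest continuous family of steepest-descent-type contours inside $D'$ along which the unique saddle $z_y$ attains the maximum of $\operatorname{Re}[\Phi_y(z)]$ and which can be joined to $C_-(\epsilon)$ within the domain of holomorphy. This requires a careful analytic study of the saddle point equation $\Phi_y'(z)=0$ (a transcendental equation in the exponentials $e^{\pm(z\pm y)\xi}$) and of the level lines of $\operatorname{Re}[\Phi_y]$ near $z_y$, together with checking the angular condition on $\sqrt{-\Phi_0''(z_0)}$ so that the Gaussian contour at the saddle is oriented compatibly with $C_-(\epsilon)$. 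Once Lemma~\ref{lemmaFSA} is in hand, the rest of the argument is a direct application of Theorem~\ref{FSA}.
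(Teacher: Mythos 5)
Your proposal is correct and follows essentially the same route as the paper: the paper obtains Theorem~\ref{saddlepointapp} precisely by applying the one-parameter family saddle point approximation (Theorem~\ref{FSA}) with $f(z)=\exp(a\xi z)$ and the family $\Phi_{y}$ parametrized by $y_M=a/(M+a)\to 0$, with Lemma~\ref{lemmaFSA} supplying the required contours. Your identification of the contour-construction step (Lemma~\ref{lemmaFSA}) as the real analytic content, and your use of Cauchy's theorem to deform $C_{-}(\epsilon)$ within $D'$, match the paper's argument.
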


Together with the following proposition, which provides a control on the right-hand side, the integral in Theorem~\ref{saddlepointapp} is ensured to have exponentially growth.
\begin{proposition}\label{positive}
$\operatorname{Re}\Phi_{M}\left(z_{M}\right)$ is positive for $0\leq u<\log((3+\sqrt{5})/2)$ and $M$ is large.
\end{proposition}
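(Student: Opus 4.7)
The plan is to reduce the claim to a property of the $M \to \infty$ limit and then to a scalar inequality in $u$.

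First, since $a/(M+a) \to 0$, the family $\Phi_M$ converges uniformly on every compact subset of the common domain (together with all derivatives) to the limiting potential
\[\Phi(z) = \frac{1}{\xi}\bigl[\operatorname{Li_2}(e^{u-z\xi}) - \operatorname{Li_2}(e^{u+z\xi})\bigr] - uz.\]
Solving $\Phi'(z) = \log(1-e^{u-z\xi}) + \log(1-e^{u+z\xi}) - u = 0$ yields $\cosh(z\xi) = \cosh u - 1/2$, and the saddle point $z_0$ inside the contour (the one reducing to $5/6$ at $u = 0$) is $z_0 = (2\pi i - \varphi(u))/\xi$. A direct computation gives $\Phi''(z_0) = 2\xi \sinh\varphi(u)$, which is nonvanishing for $u \in [0, u_\ast)$ where $u_\ast = \log((3+\sqrt{5})/2)$. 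By the implicit function theorem the saddle $z_M$ of $\Phi_M$ therefore satisfies $z_M \to z_0$ and $\Phi_M(z_M) \to \Phi(z_0)$, so it suffices to verify $\operatorname{Re}\Phi(z_0) > 0$ for each $u \in [0, u_\ast)$.

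Substituting $z_0\xi = 2\pi i - \varphi(u)$, exploiting the $2\pi i$-periodicity of $\operatorname{Li_2}\circ\exp$, and using the definition of $S(u)$ gives the compact identity $\xi\,\Phi(z_0) = -S(u) - 2\pi i u$. Writing $\varphi(u) = i\psi(u)$ with $\psi(u) = \arccos(\cosh u - 1/2) \in (0, \pi/3]$ real, the two arguments $e^{u\pm i\psi}$ are complex conjugates lying off the branch cut $[1,\infty)$ of $\operatorname{Li_2}$, so $\operatorname{Li_2}(e^{u-i\psi}) = \overline{\operatorname{Li_2}(e^{u+i\psi})}$. Consequently $S(u) = -iV(u)$ is purely imaginary, where
\[V(u) = 2\operatorname{Im}\operatorname{Li_2}\bigl(e^{u+i\psi(u)}\bigr) + u\,\psi(u)\]
is the real-valued cone-manifold volume. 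Taking real parts, using $1/\xi = (u-2\pi i)/(u^2+4\pi^2)$, yields
\[\operatorname{Re}\Phi(z_0) \;=\; \frac{2\pi\bigl(V(u) - 2\pi u\bigr)}{u^2 + 4\pi^2},\]
so the proposition reduces to the real inequality $V(u) > 2\pi u$ on $[0, u_\ast)$.

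At the endpoints: $V(0) = 2\operatorname{Im}\operatorname{Li_2}(e^{i\pi/3}) = \Vol(\SS^3 \setminus 4_1) > 0$, while as $u \to u_\ast^-$ we have $\psi \to 0$, and the branch-cut identity $\operatorname{Im}\operatorname{Li_2}(x+i0^+) = \pi \log x$ for $x>1$ (equivalently, vanishing of the Bloch-Wigner dilogarithm on $\RR$) gives $V(u_\ast) = 2\pi u_\ast$. Thus $V - 2\pi u$ is positive at $0$ and vanishes at $u_\ast$; the main technical obstacle is establishing strict monotonicity $(V - 2\pi u)'(u) < 0$ on $[0,u_\ast]$, which then rules out any interior zero. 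I would attack this either by direct differentiation, using the implicit relation $\cos\psi = \cosh u - 1/2$ to express $\psi'(u)$ and then invoking the saddle identity $(1-e^{u+i\psi})(1-e^{u-i\psi}) = e^u$ to simplify $\log(1 - e^{u\pm i\psi})$, or alternatively via the Schläfli formula for hyperbolic cone manifolds, which identifies $V'(u)$ with $-\tfrac{1}{2}$ times the length of the singular locus in the associated family and yields $V'(u) < 2\pi$ for the figure-eight geometric family. Combining the monotonicity with the endpoint values produces $V(u) > 2\pi u$ on $[0, u_\ast)$, which together with the first step finishes the proof.
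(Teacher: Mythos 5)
Your opening reduction is exactly the paper's: since $a/(M+a)\to 0$, one has $z_M\to z_0$ and $\Phi_M(z_M)\to\Phi(z_0)$, so the proposition follows once $\operatorname{Re}\Phi(z_0)>0$ is known for $0\le u<\log((3+\sqrt5)/2)$. At that point the paper simply quotes Lemma 3.5 of \cite{HM13}, which is precisely this positivity statement, and stops. You instead set out to reprove it, and this is where the gap lies: after the (correct, modulo branch bookkeeping) algebraic reduction $\operatorname{Re}\Phi(z_0)=\frac{2\pi\,(V(u)-2\pi u)}{u^2+4\pi^2}$ with $V(u)=2\operatorname{Im}\operatorname{Li_2}(e^{u+i\psi(u)})+u\,\psi(u)$, the decisive inequality $V(u)>2\pi u$ on $[0,u_*)$ is never established. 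You compute the endpoint values ($V(0)=\Vol(\SS^3\setminus 4_1)>0$ and $V(u_*)=2\pi u_*$) and then state that you ``would attack'' the monotonicity $(V-2\pi u)'<0$ either by direct differentiation or by Schl\"afli's formula. Since the inequality degenerates at the right endpoint, the endpoint data alone excludes nothing; the whole burden of the proposition (beyond the convergence step) rests on that unproven monotonicity. The Schl\"afli route in particular is not a one-line fix: it requires producing the one-parameter family of hyperbolic cone structures on the figure-eight complement parametrized by $u$, identifying $V(u)$ with their volumes and $u$ with the relevant holonomy data, and then deducing $V'(u)<2\pi$ — substantive input that is neither set up nor cited. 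The direct-differentiation route likewise needs to be carried out, since the sign of $V'(u)-2\pi$ is not obvious from the saddle relation alone.

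So the proposal is best described as the paper's argument with its only external ingredient (the positivity of $\operatorname{Re}\Phi(z_0)$, i.e.\ \cite{HM13}, Lemma 3.5) replaced by an unfinished sketch; as written it does not constitute a proof. Two smaller cautions: your branch choice $\varphi(u)=i\psi(u)$ with $\psi(0)=\pi/3$ differs from the convention used in the paper and in \cite{HM13} (where $\phi(0)=-5\pi i/3$ and $z_0=(\phi(u)+2\pi i)/\xi$), so if you do complete the computation you must check that your $z_0$ is the saddle actually lying on the deformed contour of Lemma 7; and the claim $\Phi''(z_0)=2\xi\sinh\varphi(u)$, while consistent with the paper's $\xi\sqrt{(2\cosh u+1)(2\cosh u-3)}$ up to the branch of the square root, should be stated with the branch fixed, since the same expression feeds the $|\arg|<\pi/4$ hypothesis of the saddle-point theorem elsewhere.
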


Combining the controls in Propositions~\ref{tanapp} and~\ref{exprepn} and Theorem~\ref{saddlepointapp}, we are able to estimate $G_{\pm}(M,\epsilon)$, namely,
\begin{align*}
\lefteqn{\lim_{M \to \infty} \left| \frac{G_{\pm}(M,\epsilon)}{\displaystyle\mp i \int_{C_{\pm}(\epsilon)} \exp(a\xi z) \exp\left((M+a)\Phi_{M}(z)\right) dz} -1 \,\right| \quad\leq }\\
&\qquad\qquad \frac{K_{1, \pm}}{\displaystyle (M+a)\left| \int_{C_{\pm}(\epsilon)} \exp(a\xi z) \exp\left((M+a)\Phi_{M}(z)\right) dz \right|}\quad + \\
&\qquad\qquad\qquad
\frac{K_{2} \log(M+a)}{M+a} \times \frac{\exp\left((M+a) \operatorname{Re} \tilde{\Phi}_{M}(z_{M})\right)}{\displaystyle\left| \int_{C_{\pm}(\epsilon)} \exp(a\xi z) \exp\left((M+a)\Phi_{M})(z) \right) dz \right|} \\
&\qquad\qquad\qquad\qquad\qquad\qquad\qquad\qquad\qquad \xrightarrow{M \to \infty} 0\,.
\end{align*}
Thus, up to this point, we can asymptotically express $J_M$ in terms of quantum dilogarithm and a contour integral involving exponential of $(M+a)\Phi_M$. That is,
\begin{align}\label{Jfor}
&J_{M}\left(4_1,e^{\xi/(M+a)}\right) \quad\stackrel[M \to \infty]{\sim}{ } \frac{S_{\gamma}(-\pi -iu - (2a-1)\gamma)}{S_{\gamma}(\pi -iu - (2a+1)\gamma)}\notag\\
&\qquad\qquad\times (M+a)\exp\left(\frac{u}{2}-\frac{a\xi}{2(M+a)}\right)
\int_{C_{-}(\epsilon)} \exp(a\xi z) \exp\left((M+a)\Phi_{M}(z)\right)dz\,
\end{align}

Moreover, let $z_0$ be the saddle point of $\Phi(z)$. Then we have (see p.200 of \cite{HM13})
\begin{align}\label{tor}
\lim_{M \to \infty} \frac{d^{2} \Phi_{M}}{dz^{2}}\left(z_{M}\right)
= \frac{d^{2}}{dz^{2}} \Phi_{\infty} \left(z_0 \right)
= \xi \sqrt{(2\cosh u+1)(2\cosh u-3)}
\end{align}

The asymptotic behavior of the ratio of the quantum dilogarithm is given by the following lemma.

\begin{lemma}\label{Sratio}
For $\ds \gamma = \frac{2\pi - iu}{2(M+a)}$ with $u\geq 0$, let $a = b +c$ for some $b \in \NN \cup \{0\}$, $c \in (0,1)$.
\begin{enumerate}
\item If $b \neq 0$ and $u\neq 0$, we have
\begin{align*}
\frac{S_{\gamma}(-\pi-iu - (2a-1)\gamma)}{S_{\gamma}(\pi-iu-(2a+1)\gamma)}
&\hspace{8pt} = \hspace{8pt} \frac{\exp(u-2c\gamma i)-1}{\exp(u-2a\gamma i)-1} \times \frac{S_{\gamma}(-\pi-iu - (2c-1)\gamma)}{S_{\gamma}(\pi-iu-(2c+1)\gamma)}\\
&\hspace{8pt} = \hspace{8pt}\frac{\exp(u-2c\gamma i)-1}{\exp(u-2a\gamma i)-1} \times \frac{\exp(u\pi / \gamma -2c\pi i) -1 }{\exp(u-2c\gamma i) -1}\\
&\stackrel[M \to \infty]{\sim}{ } \frac{\exp(2\pi iu (M+a) /\xi -2c\pi i)  }{\exp(u) -1}.
\end{align*}
\item If $u=0$, we have
\begin{align*}
\frac{S_{\gamma}(-\pi - (2a-1)\gamma)}{S_{\gamma}(\pi-(2a+1)\gamma)}
&\hspace{8pt} = \hspace{8pt}\frac{\exp(-2c\pi i) -1 }{\exp(-2a\gamma i)-1}\\
&\hspace{8pt} = \hspace{8pt}\frac{\exp(-a\pi i) \sin {a\pi}}{\exp(-a\gamma i)\sin (a\gamma )}\\
&\stackrel[M \to \infty]{\sim}{ } \exp(-a\pi i + a\gamma i) \left(\frac{\sin a \pi}{a\pi}\right)\left(M+a\right).
\end{align*}
\end{enumerate}
\end{lemma}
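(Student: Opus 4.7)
The plan is to prove Lemma~\ref{Sratio} in three stages: a telescoping reduction from $a$ to $c = a - b$ using the given functional equation of $S_\gamma$; a residue-theoretic evaluation of the ``second-period'' shift $S_\gamma(w - 2\pi)/S_\gamma(w)$; and finally a $\gamma \to 0$ asymptotic analysis. Writing $a = b + c$ with $b \in \NN \cup \{0\}$ and $c \in (0,1)$, and rewriting the given functional equation as $S_\gamma(w - 2\gamma) = (1 + e^{i(w-\gamma)})\,S_\gamma(w)$, Stage~1 iterates this identity $b$ times at $w_0 = -\pi - iu - (2c-1)\gamma$ (for the numerator) and at $w_0' = \pi - iu - (2c+1)\gamma$ (for the denominator), producing the products
\begin{align*}
\prod_{k=0}^{b-1}\bigl(1 - e^{u-2i(c+k)\gamma}\bigr) \quad\text{and}\quad \prod_{k=0}^{b-1}\bigl(1 - e^{u-2i(c+k+1)\gamma}\bigr),
\end{align*}
whose quotient telescopes to $(e^{u-2ic\gamma} - 1)/(e^{u-2ia\gamma} - 1)$, giving the first equality.

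For Stage~2, one more application of the functional equation yields $S_\gamma(\pi - iu - (2c+1)\gamma) = (1 - e^{u - 2ic\gamma})\,S_\gamma(\pi - iu - (2c-1)\gamma)$, so the remaining task is to compute $S_\gamma(w - 2\pi)/S_\gamma(w)$ for $w = \pi - iu - (2c-1)\gamma$. I would subtract the defining integrals and use $1 - e^{-2\pi t} = 2 e^{-\pi t}\sinh(\pi t)$ to collapse the factor $\sinh(\pi t)$, obtaining
\begin{align*}
\log S_\gamma(w) - \log S_\gamma(w - 2\pi) = \frac{1}{2}\int_{C_R} \frac{e^{(w-\pi)t}}{t \sinh(\gamma t)}\,dt .
\end{align*}
After checking decay in the lower imaginary direction (which holds since $\im(\gamma) = -u/(2(M+a)) < 0$), I would close the contour downward and collect the order-$2$ pole at $t = 0$ (residue $(w-\pi)/\gamma$) together with the simple poles at $t = -k\pi i/\gamma$ for $k \geq 1$, whose residues sum via $\sum_k (-1)^{k+1}x^k/k = \log(1+x)$. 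Simplifying the resulting expression $-\pi i(w-\pi)/\gamma - \log(1 - e^{-u\pi/\gamma + 2c\pi i})$ modulo $2\pi i$ produces $S_\gamma(w-2\pi)/S_\gamma(w) = \exp(u\pi/\gamma - 2c\pi i) - 1$, which is the second equality.

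For Stage~3, the identity $2\pi - iu = -i\xi$ gives $u\pi/\gamma = 2\pi i u(M+a)/\xi$, with real part $4\pi^2 u(M+a)/(4\pi^2 + u^2) \to +\infty$; hence for $u > 0$ the factor $\exp(u\pi/\gamma - 2c\pi i)$ dominates the ``$-1$'' in the numerator while $\exp(u - 2ic\gamma) \to e^u$ in the denominator, producing the claimed asymptotic. For $u = 0$, the fact that $b \in \NN$ gives $e^{-2c\pi i} = e^{-2a\pi i}$, and the substitutions $e^{-2c\pi i} - 1 = -2i e^{-a\pi i}\sin(a\pi)$ and $e^{-2a\gamma i} - 1 = -2i e^{-a\gamma i}\sin(a\gamma)$, combined with $\sin(a\gamma) \sim a\gamma = a\pi/(M+a)$, deliver the factor $(\sin a\pi/(a\pi))(M+a)$. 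The main obstacle is the residue evaluation in Stage~2: one must carefully account for the interaction between the order-$2$ pole at the origin (contributing a linear-in-$w$ term) and the logarithmic sum over the infinite family of poles on the imaginary axis, while handling the branch-cut ambiguity in the logarithm, which is absorbed upon exponentiation.
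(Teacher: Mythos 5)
Your overall route is correct, and it differs from the paper's in a worthwhile way. The paper proves the lemma only for $c=\tfrac12$ (remarking that the general case is similar): it writes $S_\gamma(-\pi-iu)/S_\gamma(\pi-iu-2\gamma)$ as a single integral, splits the integrand as $\coth(\pi t)-\coth(\gamma t)$, and closes a carefully chosen contour in the lower half plane, summing residues over \emph{both} families of poles $t=-li$ and $t=-l\pi i/\gamma$ (each including a double pole at $0$). You instead (i) prove the first equality explicitly by telescoping the functional equation $b$ times, a step the paper leaves implicit, and your products and their quotient are correct; and (ii) reduce the remaining ratio to a pure $2\pi$-shift $S_\gamma(w-2\pi)/S_\gamma(w)$, which lets you cancel $\sinh(\pi t)$ via $1-e^{-2\pi t}=2e^{-\pi t}\sinh(\pi t)$ and leaves only the single family of poles $t=-k\pi i/\gamma$ together with the double pole at $0$; this amounts to deriving the second functional equation $S_\gamma(z-\pi)=(1+e^{i\pi z/\gamma})S_\gamma(z+\pi)$, and it treats every $c\in(0,1)$ at once. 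That is a cleaner computation than the paper's, at the price of one extra application of the first functional equation.

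Two points need fixing. First, a sign: exponentiating your (correct) intermediate expression $-\pi i(w-\pi)/\gamma-\log\bigl(1-e^{-u\pi/\gamma+2c\pi i}\bigr)$, and using $e^{\pi i(w-\pi)/\gamma}=-e^{u\pi/\gamma-2c\pi i}$ for $w=\pi-iu-(2c-1)\gamma$, gives $S_\gamma(w-2\pi)/S_\gamma(w)=1-e^{u\pi/\gamma-2c\pi i}$, not $e^{u\pi/\gamma-2c\pi i}-1$ as you state; moreover this ratio is not itself ``the second equality'': you must still divide by the factor $1-e^{u-2ic\gamma}$ produced by your extra application of the functional equation, and it is exactly that division which turns $1-e^{u\pi/\gamma-2c\pi i}$ into the lemma's $\bigl(e^{u\pi/\gamma-2c\pi i}-1\bigr)/\bigl(e^{u-2c\gamma i}-1\bigr)$. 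As literally written, your Stage~2 conclusion is off by a sign and would contradict the lemma; done carefully, your own computation yields the correct statement. Second, ``checking decay in the lower imaginary direction'' understates the contour work: the poles $t=-k\pi i/\gamma$ run off to infinity along a ray in the lower half plane, so the closing contour must be chosen (as in the paper, by a judicious choice of the radii) to stay away from them, with separate estimates on each segment; and at $u=0$ the decay degenerates, so part~2 should be obtained, as the paper does, by letting $u\to0^{+}$ in the closed formula (both sides are continuous in $u$) rather than by running the contour argument at $u=0$ directly. Your part~2 algebra, using $e^{-2c\pi i}=e^{-2a\pi i}$ and $\sin(a\gamma)\sim a\pi/(M+a)$, is correct.
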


Since $b$ is a non-negative integer, we also have $\exp(2\pi a i) =\exp(2\pi c i)$. By Theorem~\ref{saddlepointapp}, (\ref{Jfor}), (\ref{tor}) and Lemma~\ref{Sratio}, we have
\begin{align}\label{ch0}
&J_{M}\left(4_1,q \right) \notag\\
 \stackrel[M \to \infty]{\sim}{ }&  \frac{e^{2\pi iu (M+a) /\xi -2a\pi i}  }{e^{u} -1}
 (M+a)^{1/2} \left( e^{u/2 - a\xi/(2(M+a))}\right) \exp(a\xi z_M)\notag\\
&\qquad\qquad\qquad\qquad\qquad \times\frac{\sqrt{2\pi}\exp\left((M+a)\Phi_{M}\left(z_{M}\right)\right)}{\sqrt{-\xi \sqrt{(2\cosh(u)+1)(2\cosh(u)-3)} }} \notag\\
 \stackrel[M \to \infty]{\sim}{ }  &  \frac{e^{2\pi iu (M+a) /\xi -2a\pi i}  }{e^{u} -1}
 \sqrt{\frac{-2}{\sqrt{((2\cosh(u)+1)(2\cosh(u)-3)} }}\left(\frac{M+a}{\xi}\right)^{1/2} \notag\\
 &  \qquad\qquad\qquad\qquad\qquad \times \sqrt{\pi}e^{u/2}\exp(a\xi z_M)  \exp\left((M+a)\Phi_{M}\left(z_{M}\right)\right) \notag \\
\stackrel[M \to \infty]{\sim}{ }    & \frac{e^{2\pi iu (M+a) /\xi -2a\pi i}  }{e^{u} -1}
  T(u)^{1/2}\left(\frac{M+a}{\xi}\right)^{1/2}\notag \\
&  \qquad\qquad\qquad\qquad\qquad \times \sqrt{-\pi}e^{u/2} \exp(a\xi z_M)\exp\left((M+a)\Phi_{M}\left(z_{M}\right)\right) \notag \\
\stackrel[M \to \infty]{\sim}{ }    & \frac{e^{2\pi iu (M+a) /\xi -2a\pi i}  }{2\sinh (u/2)}
  T(u)^{1/2}\left(\frac{M+a}{\xi}\right)^{1/2} \notag\\
&  \qquad\qquad\qquad\qquad\qquad \times \sqrt{-\pi} \exp(a\xi z_M)\exp\left((M+a)\Phi_{M}\left(z_{M}\right)\right)
\end{align}

In order to apply the saddle point approximations, we have to solve the equation
\begin{equation}\label{saddlepteqn}
\frac{d\Phi_{M}}{dz}(z)=0 \,.
\end{equation}
Recall that
\begin{align*}
\Phi_{M}(z)=&\frac{1}{\xi}\left[\operatorname{Li_{2}}\left(e^{u-\left(z+\frac{a}{M+a}\right)\xi}\right) -
 \operatorname{Li_{2}}\left(e^{u + \left(z - \frac{a}{M+a}\right)\xi}\right)\right] - uz \\
\frac{d}{d\mu}\operatorname{Li}_{2}(e^{\mu})
&= \operatorname{Li}_{1}(e^{\mu})=-\log(1-e^{\mu})
\end{align*}
The desired saddle point equation~(\ref{saddlepteqn}) can be rewritten as below,
\begin{align}\label{cancel}
\log(1-e^{u-(z+\frac{a}{M+a})\xi})(1-e^{u+(z-\frac{a}{M+a})\xi}) - u  &=0 \,,\notag\\
\intertext{which in turns becomes,}
 (1-e^{u-(z+\frac{a}{M+a})\xi})(1-e^{u+(z-\frac{a}{M+a})\xi})  &= e^{u}\,.
\end{align}
With $\ds A=e^{u}$, $\ds B= e^{\frac{a}{M+a}\xi}$ and $\ds w = e^{z\xi}$, the above equation is equivalent to
\begin{align}\label{quadeqn}
 AB \omega^2 - (A^2 + B^2 - AB^2)\omega + AB=0
\end{align}

\begin{remark}
By putting $B=1$ we obtained the quadratic equation appearing in p.200 of \cite{HM13}.
\end{remark}

Let $\omega_{M}$ be the solution for~$\omega$ inside the domain $C(\epsilon)$ and $e^{z_{M}\xi}=\omega_{M}$. Recall that $\Phi_{\infty}(z)$ is defined to be the limit of $\Phi_M (z)$ as $M$ goes to infinity. Note that we have $z_{M} \to z_0$ as $M \to \infty$.
The last step to establish Theorem~\ref{mainthm1} is to change $\Phi_{M}$ into $\Phi_{0}$. The estimation between them is given by the following lemma, which is direct consequence of L'Hospital rule.

\begin{lemma}\label{diff1} For any $z \in D'$,
\begin{align*}
\lim_{M \to \infty} (M+a)\left(\Phi_{M}(z) - \Phi_{\infty}(z)\right) =  a[\log(1-e^{u-z\xi}) - \log(1 - e^{u+z\xi})]
\end{align*}
\end{lemma}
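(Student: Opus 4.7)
The plan is to recognize that $(M+a)(\Phi_M(z) - \Phi_0(z))$ is precisely a difference quotient in the small parameter $h := 1/(M+a)$, so its limit is a derivative at $h=0$.

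First I would write out the difference explicitly. Comparing the two definitions, only the arguments of the dilogarithms change, and the shift is by $-a\xi/(M+a) = -a\xi h$ in each exponent. Thus
\begin{align*}
\Phi_M(z) - \Phi_0(z) = \frac{1}{\xi}\Bigl[\,&\bigl(F_1(h) - F_1(0)\bigr) - \bigl(F_2(h) - F_2(0)\bigr)\Bigr],
\end{align*}
where $F_1(h) = \operatorname{Li_2}(e^{u - z\xi - a\xi h})$ and $F_2(h) = \operatorname{Li_2}(e^{u + z\xi - a\xi h})$. Multiplying by $(M+a) = 1/h$ turns this into the Newton quotient $\frac{1}{\xi}\bigl[\,(F_1(h)-F_1(0))/h - (F_2(h)-F_2(0))/h\bigr]$.

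Next I would evaluate the derivatives at $h = 0$ via the chain rule, using the standard identity $\frac{d}{dx}\operatorname{Li_2}(x) = -\log(1-x)/x$, equivalently $\frac{d}{dw}\operatorname{Li_2}(e^w) = -\log(1-e^w)$. This gives
\begin{align*}
F_1'(0) &= -(-a\xi)\log(1 - e^{u - z\xi}) = a\xi\log(1 - e^{u - z\xi}),\\
F_2'(0) &= a\xi\log(1 - e^{u + z\xi}).
\end{align*}
Substituting into the limit of the difference quotient yields
\[
\lim_{M\to\infty}(M+a)\bigl(\Phi_M(z) - \Phi_0(z)\bigr) = \frac{1}{\xi}\bigl[a\xi\log(1-e^{u-z\xi}) - a\xi\log(1-e^{u+z\xi})\bigr],
\]
which simplifies to the claimed expression. (One can view this as a direct application of L'Hospital's rule, as the paper notes, since both numerator and $h$ in the quotient tend to $0$.)

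The only thing to justify carefully is that $F_1$ and $F_2$ are genuinely $C^1$ at $h=0$, i.e.\ that $\operatorname{Li_2}$ is holomorphic at the relevant points so the chain rule applies and the derivative evaluates to the expressions above. This is where the hypothesis $z \in D'$ is used: the definition of $D'$ ensures $\operatorname{Im}(u - z\xi), \operatorname{Im}(u + z\xi) \in (-2\pi, 0)$, so in particular $e^{u \pm z\xi} \notin [1,\infty)$, placing both points safely off the branch cut of $\operatorname{Li_2}$. For all sufficiently large $M$ the perturbed points $e^{u \pm z\xi - a\xi h}$ remain in the same component of $\mathbb{C} \setminus [1,\infty)$ by continuity, so the differentiation is legitimate. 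This is really the only mild subtlety; otherwise the lemma is an immediate computation, and I expect no serious obstacle beyond bookkeeping of signs and the chain rule.
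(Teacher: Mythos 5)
Your proposal is correct and follows essentially the same route as the paper: both recognize $(M+a)(\Phi_M(z)-\Phi_0(z))$ as a difference quotient in the small shift parameter and evaluate the limit via L'Hospital's rule together with $\frac{d}{dw}\operatorname{Li}_2(e^{w})=-\log(1-e^{w})$. Your additional remark on why $z\in D'$ keeps the arguments off the branch cut of $\operatorname{Li}_2$ is a sound (and slightly more careful) justification of a step the paper leaves implicit.
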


From Equation~(3.1) in \cite{HM13} we know that $z_{\infty}=\dfrac{\varphi(u) + 2 \pi i}{ \xi}$, where $\ds \varphi(u) = \arccosh(\cosh(u) - 1/2)$. That means
\begin{align}\label{ch1}
&\hspace{28pt} \exp \left((M+a)\Phi_{M}\left(z_{M}\right)\right)  \notag \\
&\stackrel[M \to \infty]{\sim}{}
\exp\left(a\log(1-e^{u-z_M \xi}) - a\log(1 - e^{u+z_M \xi})\right)\exp\left((M+a)(\Phi_{\infty}(z_{M}))\right) \notag\\
&\stackrel[M \to \infty]{\sim}{}  \frac{(1-e^{u-z_M \xi})^a}{(1 - e^{u+z_M \xi)^a}} \exp\left((M+a)(\Phi_{\infty}(z_{M}))\right) \notag\\
&\stackrel[M \to \infty]{\sim}{}  \frac{(1-e^{u-z_\infty \xi})^a}{(1 - e^{u+z_\infty \xi})^a}  \exp\left((M+a)(\Phi_{\infty}(z_{M}))\right) \notag\\
&\stackrel[M \to \infty]{\sim}{}  \frac{(1-e^{u-\varphi(u)})^a}{(1 - e^{u+\varphi(u)})^a}  \exp\left((M+a)(\Phi_{\infty}(z_{M}))\right)
\end{align}

Using (\ref{quadeqn}), one can show that $z_{M} - z_\infty = O\left(\frac{1}{M+a}\right)$. Together with the fact that $z_{\infty}$ satisfies the equation $\left.\dfrac{d\Phi_{\infty}}{dz}\right|_{z_\infty}=0$, we have
\begin{lemma}\label{diff2}
$\ds \lim_{M \to \infty} (M+a)\left(\Phi_{\infty}(z_{M}) - \Phi_{\infty}(z_{\infty})\right) = 0$
\end{lemma}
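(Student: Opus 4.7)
The plan is to combine the saddle-point condition $\Phi_0'(z_0)=0$ with the quantitative estimate $z_M - z_0 = O(1/(M+a))$ in a standard Taylor expansion argument. The conclusion is then a one-line consequence: the quadratic and higher terms each pick up two or more factors of $z_M-z_0$, which more than compensates for the prefactor $(M+a)$.

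First I would justify the stated bound $z_M - z_0 = O(1/(M+a))$ carefully. Recall from~\eqref{quadeqn} that with $a=e^u$, $b=e^{\frac{a}{M+a}\xi}$ and $\omega=e^{z\xi}$, the saddle point $\omega_M$ for $\Phi_M$ satisfies
\[ ab\,\omega^2 - (a^2+b^2-ab^2)\,\omega + ab = 0, \]
while $\omega_0 = e^{z_0\xi}$ satisfies the $b=1$ specialization
\[ a\,\omega^2 - (a^2+1-a)\,\omega + a = 0. \]
Since $b = 1 + O(1/(M+a))$, the coefficients of the first quadratic differ from those of the second by $O(1/(M+a))$. The non-degeneracy hypothesis $\Phi_0''(z_0)\ne 0$ (recorded in~\eqref{tor}) ensures $\omega_0$ is a simple root of the limiting quadratic, so the implicit function theorem (applied to the polynomial in $(b,\omega)$) gives a smooth branch with $\omega_M - \omega_0 = O(1/(M+a))$. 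Taking the principal branch of the logarithm and dividing by $\xi$ yields $z_M - z_0 = O(1/(M+a))$.

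Next I would Taylor expand $\Phi_0$ around $z_0$. Analyticity of $\Phi_0$ on $D'$ together with $z_M\to z_0$ gives
\begin{align*}
\Phi_0(z_M) - \Phi_0(z_0)
&= \Phi_0'(z_0)\,(z_M-z_0) + \tfrac{1}{2}\Phi_0''(z_0)\,(z_M-z_0)^2 + O\!\bigl((z_M-z_0)^3\bigr).
\end{align*}
The first term vanishes because $z_0$ is by definition a saddle point of $\Phi_0$, so $\Phi_0'(z_0)=0$. Substituting the bound from the previous step,
\[ \Phi_0(z_M) - \Phi_0(z_0) = O\!\bigl((z_M-z_0)^2\bigr) = O\!\bigl(1/(M+a)^2\bigr). \]
Multiplying by $(M+a)$ gives $(M+a)\bigl(\Phi_0(z_M)-\Phi_0(z_0)\bigr) = O(1/(M+a))$, which tends to $0$ as $M\to\infty$, proving the lemma.

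The only non-routine point is the perturbation estimate for the saddle point, and even there the work is minimal because the simplicity of $\omega_0$ as a root is already baked into the non-degeneracy assumption used in Theorem~\ref{saddlepointapp} and recorded in~\eqref{tor}. No difficult estimates on the dilogarithm are needed for this lemma itself; all heavy lifting has been done in the preceding lemmas. The main pedagogical obstacle is simply being careful that the overloaded symbol $a$ (simultaneously the parameter in $(M+a)$ and the auxiliary constant $e^u$ in~\eqref{quadeqn}) does not obscure the implicit function theorem application.
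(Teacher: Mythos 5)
Your proposal is correct and takes essentially the same route as the paper: the paper also derives $z_M-z_0=O\left(\frac{1}{M+a}\right)$ by comparing the quadratic (\ref{quadeqn}) with its $b=1$ specialization (by explicit subtraction rather than your implicit function theorem appeal) and then concludes from $\Phi_0'(z_0)=0$, phrased as a difference quotient instead of your second-order Taylor expansion. No gap.
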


As a result, (\ref{ch1}) becomes
\begin{align}\label{Mto0}
 \exp \left((M+a)\Phi_{M}\left(z_{M}\right)\right)
 \stackrel[M \to \infty]{\sim}{} \frac{(1-e^{u-\varphi(u)})^a}{(1 - e^{u+\varphi(u)})^a}   \exp\left((M+a)(\Phi_{\infty}(z_{\infty}))\right)
\end{align}

Altogether, by (\ref{ch0}) and (\ref{Mto0}), we have

\begin{align*}
&J_{M}\left(4_1,q \right) \\
\stackrel[M \to \infty]{\sim}{ }  & (\exp(2\pi iu (M+a) /\xi -2a\pi i))\frac{ \sqrt{-\pi} }{2\sinh (u/2)}
  T(u)^{1/2}\left(\frac{M+a}{\xi}\right)^{1/2}  \\
& \qquad \times \exp(a(\varphi(u)+2\pi i))\frac{(1-e^{u-\varphi(u)})^a}{(1 - e^{u+\varphi(u)})^a} \exp\left((M+a)(\Phi_{\infty}(z_{\infty}))\right) \\
\stackrel[M \to \infty]{\sim}{ }  & \exp(a\varphi(u))\frac{(1-\exp(u-\varphi(u)))^a}{(1 - \exp(u+\varphi(u)))^a} \\
&\qquad \times  \frac{ \sqrt{-\pi} }{2\sinh (u/2)} T(u)^{1/2}\left(\frac{M+a}{\xi}\right)^{1/2} \exp\left((M+a)(S(u))\right). \\
\end{align*}

This proves Theorem~\ref{mainthm1}.

Similarly, for $u=0$, from [Remark 3.6, \cite{HM13}] we have
$$\varphi(0)=\ds \frac{-5\pi i}{3} \text{ and } \Phi_\infty(z_\infty)=\frac{\Vol(\mathbb{S}^3 \backslash K)}{2\pi}.$$
Furthermore, we have
$$  \exp(a(\varphi (0)+2\pi i)) = \exp\left(\frac{a\pi i}{3}\right)\text{ and } \frac{(1-e^{-\varphi(0)})^a}{(1 - e^{\varphi(0)})^a}= \exp\left(\frac{2a\pi i}{3}\right).$$

As a result, by Theorem~\ref{saddlepointapp}, Lemma~\ref{Sratio}, (\ref{Jfor}), (\ref{tor}), (\ref{ch0}) and (\ref{Mto0}), we have the following AEF:
\begin{align*}
J_{M}\left(4_1,q \right)
\stackrel[M \to \infty]{\sim}{ }    &  - \frac{\exp(-a\pi i)}{\exp(-a\gamma i)} \left(\frac{\sin a \pi}{a\pi}\right) \times  \exp(-a\gamma i ) \left(M+a\right)
  T(0)^{1/2}\left(\frac{M+a}{2\pi i}\right)^{1/2} \\
&\times \exp(a(\varphi(0)+2\pi i))\frac{(1-e^{-\varphi(0)})^a}{(1 - e^{\varphi(0)})^a} \sqrt{-\pi} \exp\left((M+a)\Phi_{\infty}\left(z_{\infty}\right)\right) \\
\stackrel[M \to \infty]{\sim}{ }   & - \frac{\sin a\pi}{a\pi} 2\pi^{3/2}  \left(\frac{2}{\sqrt{-3}}\right)^{1/2} \left(\frac{M+a}{2\pi i}\right)^{3/2}\exp\left(\frac{M+a}{2\pi i}\times i \Vol(\mathbb{S}^3 \backslash 4_1)\right)\\
\stackrel[M \to \infty]{\sim}{ }  &  \frac{\sin a\pi}{a\pi} \frac{1}{3^{1/4}} (M+a)^{3/2} \exp\left(\frac{(M+a)\Vol(\mathbb{S}^3 \backslash 4_1)}{2\pi}\right)
\end{align*}

This proves Theorem~\ref{mainthm1.5}

\subsection{AEF for the colored Jones polynomials at $(N+\frac{1}{2})$-th root of unity with $s$ closes to $1$}

Now, we try to apply the arguments in previous subsection to the case where $a=a_{M}$ is a sequence in $M$. Namely, we consider the case where $a_{M} = N-M + \frac{1}{2}$ and $u=0$, with $N>M$ being a positive integer. Further assume that the limit $\ds s= \lim_{N \to \infty} \frac{M}{N+1/2}$ is close to 1. Then we can split the colored Jones polynomials as before. Note that the following arguments also work if we replace $\frac{1}{2}$ by any other number $c \in (0,1)$ under suitable modification.

We repeat the trick as in previous subsection. Take $\ds \epsilon = \frac{2a+\frac{1}{2}}{2(M+a)} $ and define the contour $C(\epsilon) = C_{+}(\epsilon)\cup C_{-}(\epsilon)$ with the polygonal lines $C_{\pm}(\epsilon)$ defined by
\begin{align*}
C_{+}(\epsilon)&:\hspace{3em} 1-\epsilon \rightarrow 1- \frac{u}{2\pi}-\epsilon +i \rightarrow -\frac{u}{2\pi} + \epsilon  +i \rightarrow \epsilon  \\
C_{-}(\epsilon)&:\hspace{3em} \epsilon \rightarrow \epsilon + \frac{u}{2\pi} -i \rightarrow 1  - \epsilon+\frac{u}{2\pi} -i \rightarrow 1-\epsilon
\end{align*}

Define $g_M(z)$ and $G(M,\pm \epsilon)$ as before. We have the following analogues of the Proposition~\ref{tanapp} and Proposition~\ref{exprepn}.

\begin{proposition}\label{tanapp2} There exists some $\eta_1 >0$ such that for any $s \in (1-\eta_1, 1]$, there exists a constant $K_{1,\pm}$ independent of $M$, $N$, $s$ and $\epsilon$ such that
$$ \left|   \int_{C_{\pm}(\epsilon)} (\tan((N+1/2)\pi z) \pm i)g_{M}(z) dz \right| < \frac{K_{1,\pm}}{N+1/2}\,.$$
\end{proposition}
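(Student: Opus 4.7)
The plan is to adapt the argument for Proposition~\ref{tanapp} (itself an analog of Proposition~2.2 of \cite{WA17}) while tracking the dependence of every constant on $s$, so that $\eta_1$ can be chosen small enough to make all estimates uniform over $s \in (1-\eta_1, 1]$. The new feature, compared with the fixed-$a$ setting, is that $a = N-M+\tfrac{1}{2}$ may grow linearly in $N$, so the continuity in the parameter $a/(M+a)$ must be checked explicitly.

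First I would factor an exponentially small term out of $\tan((N+\tfrac{1}{2})\pi z) \mp i$ via the identity
\begin{equation*}
\tan\!\left((N+\tfrac{1}{2})\pi z\right) - i \;=\; -\frac{2i\, e^{2\pi i(N+1/2)z}}{1 + e^{2\pi i(N+1/2)z}}
\end{equation*}
and its conjugate for $\tan+i$. On the horizontal portion of $C_+(\epsilon)$ at $\im z = 1$, the factor $|e^{2\pi i(N+1/2)z}| = e^{-2\pi(N+1/2)}$ already beats any polynomial factor coming from $g_M$. On the two short vertical segments, $\im z$ ranges from $0$ to $1$, and the elementary estimate $\int_0^1 e^{-2\pi(N+1/2)y}\,dy = O\bigl(1/(N+\tfrac{1}{2})\bigr)$ supplies the decay. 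A symmetric argument applies on $C_-(\epsilon)$ with $\tan + i$.

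Second I need a bound on $|g_M(z)|$ along $C_\pm(\epsilon)$ that is uniform in $s$. With $u=0$, the parameter $a/(M+a) = 1 - M/(N+\tfrac{1}{2})$ lies in $[0, \eta_1]$ when $s \in (1-\eta_1, 1]$, while $\gamma = \pi/(N+\tfrac{1}{2})$ is independent of $s$. For $\eta_1$ small enough, the arguments $\pm \pi - 2\pi z - 2\pi a/(M+a)$ of the quantum dilogarithms in $g_M$ stay in a fixed compact subset of the domain of $S_\gamma$, bounded away from its poles, as $z$ traverses $C_\pm(\epsilon)$. The Faddeev-type asymptotic $\log S_\gamma(w) = (2i\gamma)^{-1}\operatorname{Li_2}(-e^{iw}) + O(1)$, with error uniform on compact sets, then bounds $|g_M(z)|$ by a constant multiple of $\exp\bigl((N+\tfrac{1}{2})\,\Re\tilde\Phi_M^{(s)}(z)\bigr)$. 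A routine check shows that on the horizontal segments at $\im z = 1$, the $e^{-2\pi(N+1/2)}$ from $\tan\mp i$ strictly dominates this growth, so those pieces contribute far less than $1/(N+\tfrac{1}{2})$; combining with the vertical-segment estimate yields the claimed bound with $K_{1,\pm}$ depending only on $\eta_1$.

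The main obstacle will be the uniformity in $s$. In Proposition~\ref{tanapp}, $a$ is fixed and the shifted arguments of $S_\gamma$ live in a single compact set automatically. Here the shift $-2\pi a/(M+a)$ genuinely varies with $s$, so I must choose $\eta_1$ small enough that the swept-out family of arguments avoids the singularities of $S_\gamma$ uniformly in $(M, N, s)$, and verify that the remainder in the Faddeev asymptotic remains bounded by a single constant over this family. This is the only step that is not a direct transcription of the proof in \cite{WA17}, and carrying it out rigorously is where the bulk of the technical work will lie.
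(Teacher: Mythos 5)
Your overall strategy is the paper's: write $\tan((N+\frac12)\pi z)\mp i$ as $\mp 2i e^{\pm 2\pi i(N+1/2)z}/(1+e^{\pm 2\pi i(N+1/2)z})$ to extract exponential decay, and compare $g_M$ with $\exp\bigl((N+\frac12)\tilde\Phi^{(s)}_M(z)\bigr)$ through the quantum dilogarithm expansion, with the remainder $I_\gamma$ bounded uniformly (the paper does this via Lemma~3 of \cite{AH06}). But there is a genuine gap in how you combine the two bounds on the vertical segments. Near their endpoints on the real axis ($\operatorname{Im}z$ close to $0$) the tangent factor provides essentially no decay, so your estimate $\int_0^1 e^{-2\pi(N+1/2)y}\,dy=O\bigl(1/(N+\tfrac12)\bigr)$ yields the claimed bound only if $|g_M|$ is uniformly bounded along those segments, i.e.\ only if $\operatorname{Re}\tilde\Phi^{(s)}_M(z)\le 0$ there. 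Your own bound on $g_M$ allows growth like $\exp\bigl((N+\tfrac12)\operatorname{Re}\tilde\Phi^{(s)}_M(z)\bigr)$, and your ``routine check'' addresses only the horizontal piece at $\operatorname{Im}z=1$; on the vertical pieces the combined integrand is of size $\exp\bigl((N+\tfrac12)(\operatorname{Re}\tilde\Phi^{(s)}_M(z)-2\pi y)\bigr)$, which does not integrate to $O\bigl(1/(N+\tfrac12)\bigr)$ unless the sign condition is verified.

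This sign condition is precisely where the hypothesis $s\in(1-\eta_1,1]$ does its work in the paper's proof, and your proposal misplaces it. At $s=1$ one knows $\operatorname{Re}\Phi\bigl((\tfrac{u}{2\pi}+i)t+\epsilon\bigr)<0$ from the proof of (6.2) in \cite{HM13}; since $\tilde\Phi^{(s)}_M\to\tilde\Phi^{(s)}_0$ as $M\to\infty$ and $\tilde\Phi^{(s)}_0\to\tilde\Phi^{(1)}_0$ as $s\to 1$, one chooses $\eta_1$ (the paper's $\zeta_1$) so that $\operatorname{Re}\tilde\Phi^{(s)}_M\le 0$ on the relevant segments for all $1-\eta_1<s\le 1$ and $M$ large, after which the paper's integral $\int_0^1 e^{-2(M+a)\pi t}\,dt$ gives the $K_{1,\pm}/(N+\tfrac12)$ bound. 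By contrast, the issue you single out as the ``main obstacle'' --- keeping the shifted arguments of $S_\gamma$ in a compact set away from singularities and making the Faddeev remainder uniform in $s$ --- is comparatively minor, since the shift $a/(M+a)=1-M/(N+\tfrac12)\le\eta_1$ is small and the uniform bound on $|I_\gamma|$ already covers it. To make your proof complete you must add the continuity-in-$s$ argument establishing the non-positivity of $\operatorname{Re}\tilde\Phi^{(s)}_M$ along the contour (and, on the horizontal segment, the corresponding explicit comparison of $\operatorname{Re}\tilde\Phi^{(s)}_M$ with $2\pi$ rather than an unverified ``routine check'').
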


\begin{proposition}\label{exprepn2}
Let $p(\epsilon)$ be any contour in the parallelogram bounded by $C(\epsilon)$ connecting from~$\epsilon$ to~$1-\epsilon$ Then there exists some $\eta_2 >0$ such that for any $s \in (1-\eta_2, 1]$, there exists a constant $K_{2}>0$ independent of $M$, $N$, $s$ and $\epsilon$ such that
\begin{align*}
\lefteqn{ \left| \int_{p(\epsilon)} g_{M}(z) dz - \int_{p(\epsilon)} \exp( (N+\frac{1}{2}) \tilde{\Phi}_{M}(z)) dz \right| \,\leq }\\
&\qquad\qquad \frac{K_{2} \log (N+1/2)}{N+1/2}
\max_{\omega \in p(\epsilon)} \left\{ \exp ((N+\frac{1}{2}) \operatorname{Re}  \tilde{\Phi}_{M}(z)) \right\}
\end{align*}
\end{proposition}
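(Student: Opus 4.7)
My plan is to mirror the proof of Proposition~\ref{exprepn} (itself the analogue of Proposition~2.3 of~\cite{WA17}), while tracking the dependence of every estimate on the parameter $s = M/(N+1/2)$ so as to extract uniformity on a one-sided neighborhood of~$1$. First, I will use the standard asymptotic expansion of the quantum dilogarithm as $\gamma\to 0$: on any compact subset of the common domain of analyticity one has
\[
\log S_\gamma(z) \;=\; \frac{1}{2i\gamma}\operatorname{Li_2}(-e^{iz}) + O(\gamma),
\]
with the implicit constant depending only on the distance from the compact set to the singular lattice of $S_\gamma$. Applying this simultaneously to the numerator and denominator of
\[
g_M(z) \;=\; \exp\bigl(-(M+a)(u - a\xi/(M+a))z\bigr)\,
\frac{S_\gamma\bigl(\pi - iu + i\xi z + i\xi\, a/(M+a)\bigr)}{S_\gamma\bigl(-\pi - iu - i\xi z + i\xi\, a/(M+a)\bigr)}
\]
produces the pointwise estimate $g_M(z) = \exp\bigl((N+1/2)\tilde{\Phi}_M(z)\bigr)\bigl(1 + O(1/(N+1/2))\bigr)$, where the $O$-constant depends only on how far $z$ sits from the singular lattice of the two $S_\gamma$ factors.

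Second, I will verify that this $O$-constant can be chosen uniformly in $s \in (1-\eta_2, 1]$ for some sufficiently small $\eta_2>0$. Since $M+a = N+1/2$ and $a/(M+a) = 1-s$, as $s\to 1$ the two arguments of $S_\gamma$ limit onto the ``standard'' arguments $\pm\pi - iu \mp i\xi z$ appearing for fixed~$a$, which by the argument of Proposition~\ref{exprepn} stay at a uniform positive distance from the singular lattice whenever $z$ lies in the parallelogram bounded by $C(\epsilon)$. A continuity/compactness argument then produces a uniform positive lower bound on this distance, and hence a uniform $O$-constant, for all $s \in (1-\eta_2, 1]$. Integrating the pointwise estimate over $p(\epsilon)$, and noting that the length of $p(\epsilon)$ is uniformly bounded up to a factor of $\log(N+1/2)$ from the narrow strips near the corners $\epsilon$ and $1-\epsilon$ where the quantum dilogarithm estimate deteriorates, yields the claimed bound $K_2\log(N+1/2)/(N+1/2)$ times $\max_{z\in p(\epsilon)}\exp\bigl((N+1/2)\operatorname{Re}\tilde{\Phi}_M(z)\bigr)$.

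The main obstacle is exactly this uniformity. The contour $p(\epsilon)$ has endpoints $\epsilon=(2a+1/2)/(2(M+a))$ and $1-\epsilon$ which themselves drift with $s$, and both the arguments of $S_\gamma$ and the region where $\tilde{\Phi}_M$ is analytic depend on~$s$. The restriction $s > 1-\eta_2$ with $\eta_2$ small enough is what guarantees that a single compact subdomain of the complement of the singular lattice contains every relevant $p(\epsilon)$, and that the error constants from the quantum-dilogarithm expansion can be chosen independently of $s$, $M$, $N$, and the particular choice of contour $p(\epsilon)$ inside the parallelogram. Once this uniformity is secured, the remainder of the argument is a routine repetition of Proposition~\ref{exprepn}.
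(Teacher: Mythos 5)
Your overall strategy---writing $S_\gamma(z)=\exp\bigl(\tfrac{1}{2i\gamma}\operatorname{Li_2}(-e^{iz})+I_\gamma(z)\bigr)$, so that $g_M(z)=\exp\bigl((N+\tfrac12)\tilde{\Phi}_M(z)\bigr)\exp(I_1-I_2)$ and the error is governed by the correction terms $I_\gamma$---is the same as the paper's. The gap is in your uniformity claim. It is not true that the arguments of the two $S_\gamma$ factors stay at a uniform positive distance from the lines $\operatorname{Re}=\pm\pi$ as $z$ ranges over the parallelogram bounded by $C(\epsilon)$: near the endpoints $\epsilon$ and $1-\epsilon$ (with $\epsilon=\tfrac{2a+1/2}{2(M+a)}$ itself moving with $M,N$) the real parts of those arguments come within $O(1/(N+\tfrac12))$ of $\pm\pi$, so no single compact subset of the regular region contains all the relevant contours, and the expansion $\log S_\gamma=\tfrac{1}{2i\gamma}\operatorname{Li_2}(-e^{iz})+O(\gamma)$ with one uniform constant fails exactly there. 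Hence your pointwise estimate $g_M(z)=\exp\bigl((N+\tfrac12)\tilde{\Phi}_M(z)\bigr)\bigl(1+O(1/(N+\tfrac12))\bigr)$ is not uniform along $p(\epsilon)$; if it were, you would obtain a bound \emph{without} the $\log(N+\tfrac12)$, and the logarithm in the statement is precisely the trace of this non-uniformity.

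The paper handles the degeneration quantitatively: it uses Lemma 3 of \cite{AH06}, $|I_\gamma(z)|\le A\bigl(\tfrac{1}{\pi-\operatorname{Re}z}+\tfrac{1}{\pi+\operatorname{Re}z}\bigr)|\gamma|+B(1+e^{-\operatorname{Im}(z)R})|\gamma|$, deduces $|I_1-I_2|\le A'''|\gamma|f(t)$ with $f(t)=\tfrac1t+\tfrac1{1-t}$ (it is in this comparison, through the ratios $f(t)/f(t\pm\tfrac{a}{M+a})$ and the specific choice of $\epsilon$, that the restriction to $s\in(1-\eta_2,1]$ and the independence of the constant from $s,M,N,\epsilon$ are actually secured), and then bounds $\int_\epsilon^{1-\epsilon}\bigl|e^{I_1-I_2}-1\bigr|\,dt$ by summing $\sum_{n\ge1}\tfrac{(A'''|\gamma|)^n}{n!}\int_{|\gamma|}^{1-|\gamma|}f(t)^n\,dt$; the $n=1$ term contributes $|\gamma|\log(M+a)$ and the higher terms are $O(|\gamma|)$. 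Your substitute for this---``the length of $p(\epsilon)$ is uniformly bounded up to a factor of $\log(N+1/2)$ from the narrow strips near the corners''---is not a derivation: the contour length is $O(1)$, and the logarithm comes from integrating the $1/\mathrm{dist}$ blow-up of $I_\gamma$ along the contour (one must also control the higher-order terms of $e^{I_1-I_2}-1$ there, since $I_1-I_2$ is only $O(1)$, not small, in the corner strips). Without the quantitative blow-up rate near $\operatorname{Re}z=\pm\pi$ and its integration, the claimed bound does not follow, so as written the proof has a genuine gap even though its first step matches the paper's.
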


When $u=0$, our function $\tilde{\Phi}_M (z)$ is given by
\begin{align*}
\tilde{\Phi}_{M}(z)
&=\frac{1}{\xi}\left[\operatorname{Li_{2}}\left(e^{-\left(z+\frac{a}{M+a}\right)\xi}\right) -
 \operatorname{Li_{2}}\left(e^{ \left(z - \frac{a}{M+a}\right)\xi}\right)\right] +\frac{a\xi z}{M+a}\\
&= \frac{1}{2\pi i}\left[\operatorname{Li_{2}}\left(e^{-2\pi iz + 2\pi i\left(\frac{M}{N+\frac{1}{2}}\right)}\right) - \operatorname{Li_{2}}\left(e^{ 2\pi iz + 2\pi i\left( \frac{M}{N+\frac{1}{2}}\right)}\right)\right] \\
&\quad + 2\pi i\left( 1 - \frac{M}{N+1/2} \right)z
\end{align*}

Now since $a = a_M$ is no longer fixed, the limiting functions of $\tilde{\Phi}_M(z)$'s are different for each $s$. What we have discussed in previous subsection can be considered as a special case where $s=1$. In general we define the function $\tilde{\Phi}^{(s)}_{M}$ and the limiting function $\tilde{\Phi}^{(s)}_{\infty}$ by
\begin{align*}
\tilde{\Phi}^{(s)}_{M}(z)
=&\frac{1}{2\pi i}\left[\operatorname{Li_{2}}\left(e^{-2\pi iz + 2\pi i\left(\frac{M}{N+\frac{1}{2}}\right)}\right) - \operatorname{Li_{2}}\left(e^{ 2\pi iz + 2\pi i\left( \frac{M}{N+\frac{1}{2}}\right)}\right)\right] \\
& \qquad+2\pi i\left( 1 - \frac{M}{N+1/2} \right)z\\
\tilde{\Phi}^{(s)}_{\infty}(z)=&\frac{1}{2\pi i}\left[\operatorname{Li_{2}}\left(e^{-2\pi iz + 2\pi i s}\right)- \operatorname{Li_{2}}\left(e^{ 2\pi iz + 2\pi i s}\right)\right] +2 \pi i\left( 1 - s \right)z
\end{align*}

Let $z^{(s)}_{\infty}$ be the solution of the saddle point equation
$$ \frac{d}{dz} \tilde{\Phi}^{(s)}_{\infty}(z) = 0$$

Since $z^{(s)}_{\infty}$ and the contour depend continuously on $s$, there exists a positive real number $\zeta<\min\{\eta_1,\eta_2\}$ such that for any $1-\zeta < s \leq 1$, the saddle point $z^{(s)}_{\infty}$ lies inside the contour $C(\epsilon)$. From now on we consider those $M$ satisfying $1-\zeta < s < 1$.

As a result, from (\ref{Jfor}), Lemma~\ref{Sratio} and Theorem~\ref{FSA} we have
\begin{align}\label{Jalimit1}
J_{M}\left(4_1,q \right) 
& \stackrel[M \to \infty]{\sim}{ } \left[ 1 +  \sum_{k=1}^{\lceil a \rceil -1} q^{-k M} \prod_{l=1}^{k}\left(1-q^{M-l}\right)\left(1-q^{M+l}\right) \right] \notag\\
&\qquad+\frac{S_{\gamma}(-\pi - (2a-1)\gamma)}{S_{\gamma}(\pi-(2a+1)\gamma)}
 (N+\frac{1}{2})e^{-a\gamma i} \frac{\sqrt{2\pi}\exp\left((N+\frac{1}{2})\tilde{\Phi}^{(s)}_{M}\left(z^{(s)}_{M}\right)\right)}{\sqrt{(N+\frac{1}{2})}\sqrt{\tilde{\Phi}_{M}^{(s)''}(z^{(s)}_M)} }
\end{align}

The following proposition ensures that the second term grows exponentially.

\begin{proposition}\label{positive2}
We may choose $\zeta > 0$ such that for every $1-\zeta<s\leq 1$,
$\operatorname{Re}\tilde{\Phi}^{(s)}_{M}\left(z^{(s)}_{M}\right)$ is positive when $M$ is sufficiently large.
\end{proposition}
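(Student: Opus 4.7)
The plan is to reduce the statement to a continuity/limit argument anchored at $s=1$, where the real part of the saddle value is already known to equal $\Vol(\SS^3 \backslash 4_1)/(2\pi) > 0$.

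First I would fix attention on the limiting function $\tilde{\Phi}^{(s)}_0(z)$ and its saddle $z^{(s)}_0$. At $s=1$, the saddle point equation reduces to $\omega^2 - \omega + 1 = 0$, with the relevant root $\omega = e^{5\pi i/3}$, i.e.\ $z^{(1)}_0 = 5/6$; and the computation displayed in the introduction gives
\[
\tilde{\Phi}^{(1)}_0\!\left(\tfrac{5}{6}\right) \;=\; \frac{1}{2\pi}\,\Vol(\SS^3 \backslash 4_1) \;>\; 0.
\]
The saddle point equation $\beta \omega^2 - (\beta^2+1-\beta)\omega + \beta = 0$ has discriminant $(\beta^2+1-\beta)^2 - 4\beta^2$, which is nonzero at $\beta = e^{2\pi i s}\big|_{s=1} = 1$; consequently the implicit function theorem produces a smooth branch $s \mapsto z^{(s)}_0$ extending $z^{(1)}_0 = 5/6$ on a small interval $(1-\zeta_0,1]$, along which the second derivative $\tilde\Phi^{(s)''}_0(z^{(s)}_0)$ stays nonzero (so the saddle remains non-degenerate). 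Composing with the continuous dependence of $\tilde{\Phi}^{(s)}_0$ on $s$ through the parameter $e^{2\pi i s}$, the map $s \mapsto \tilde{\Phi}^{(s)}_0(z^{(s)}_0)$ is continuous at $s=1$, so by shrinking $\zeta_0$ I can ensure
\[
\operatorname{Re}\tilde{\Phi}^{(s)}_0\!\left(z^{(s)}_0\right) \;\geq\; \tfrac{1}{4\pi}\Vol(\SS^3 \backslash 4_1) \;>\; 0 \qquad \text{for all } s \in (1-\zeta_0,1].
\]

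Next I would transfer this positivity from the limiting saddle $z^{(s)}_0$ to the $M$-dependent saddle $z^{(s)}_M$, using the same kind of L'Hospital comparison that furnished Lemma~\ref{diff1} and Lemma~\ref{diff2} in the fixed-$a$ case. Writing $a = N - M + \tfrac12$ and $s_M = M/(N+\tfrac12)$, the difference $\tilde{\Phi}^{(s)}_M(z) - \tilde{\Phi}^{(s)}_0(z)$ is controlled by $a/(M+a) = 1 - s_M$, which tends to $1-s$ as $M \to \infty$; combined with $z^{(s)}_M \to z^{(s)}_0$ (obtained from the quadratic saddle point equation, exactly as in the derivation of Lemma~\ref{diff2}), one obtains
\[
\tilde{\Phi}^{(s)}_M\!\left(z^{(s)}_M\right) \;\longrightarrow\; \tilde{\Phi}^{(s)}_0\!\left(z^{(s)}_0\right) \qquad \text{as } M \to \infty,
\]
and this convergence is uniform on the compact interval $s \in [1-\zeta_0/2,1]$ because all the ingredients (the saddle, the second derivative, the dilogarithm values) vary continuously jointly in $(s,1/M)$.

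Combining the two steps, I would choose $\zeta \in (0,\zeta_0/2]$ and $M_0$ so large that for $s \in (1-\zeta,1]$ and $M \geq M_0$,
\[
\bigl|\operatorname{Re}\tilde{\Phi}^{(s)}_M(z^{(s)}_M) - \operatorname{Re}\tilde{\Phi}^{(s)}_0(z^{(s)}_0)\bigr| \;<\; \tfrac{1}{8\pi}\Vol(\SS^3 \backslash 4_1).
\]
Together with the lower bound of the previous paragraph, this gives $\operatorname{Re}\tilde{\Phi}^{(s)}_M(z^{(s)}_M) > \tfrac{1}{8\pi}\Vol(\SS^3 \backslash 4_1) > 0$, which is the desired conclusion.

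The main obstacle is the uniformity of the convergence $\tilde{\Phi}^{(s)}_M(z^{(s)}_M) \to \tilde{\Phi}^{(s)}_0(z^{(s)}_0)$ in the parameter $s$. One has to verify that the error term produced by the L'Hospital step (which in the fixed-$a$ situation depended continuously on $a$ through the factors $\log(1 - e^{u \pm z\xi})$) remains bounded uniformly as $a = N - M + \tfrac12$ grows with $M$, and that no branch of the dilogarithm is crossed along the way. This is ensured because for $s$ close to $1$ the saddle $z^{(s)}_0$ stays uniformly away from the singularities of $\operatorname{Li_2}$ appearing in $\tilde{\Phi}^{(s)}_0$, and the saddle $z^{(s)}_M$ lies within $O(1/M)$ of $z^{(s)}_0$ by the quadratic formula applied to the saddle point equation (\ref{saddeqn1}).
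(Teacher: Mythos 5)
Your argument is correct and is essentially the paper's own proof: the paper likewise anchors positivity at $s=1$ (citing Lemma~3.5 of \cite{HM13}, i.e.\ $\operatorname{Re}\tilde{\Phi}^{(1)}_0(z^{(1)}_0)=\Vol(\SS^3\backslash 4_1)/2\pi>0$) and concludes by the convergence $\tilde{\Phi}^{(s)}_M(z^{(s)}_M)\to\tilde{\Phi}^{(1)}_0(z^{(1)}_0)$ as $M\to\infty$ and $s\to1$. You merely spell out the continuity-in-$s$ and uniformity details (via the implicit function theorem and the observation that $\tilde{\Phi}^{(s)}_M$ depends on $M$ only through $M/(N+\tfrac12)$) that the paper leaves implicit.
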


Note that since now $a$ depends on $M$, the first term is not a finite sum. To deal with this term, we need to following lemma, which follows easily from the arguments in Theorem~4.1 of \cite{DKY17}.

\begin{lemma}\label{uppbd}
Let $\ds g_{M}(k) = \prod_{l=1}^{k} \left|\left(q^{\frac{M-l}{2}} - q^{-\frac{M-l}{2}} \right)\left(q^{\frac{M+l}{2}} - q^{-\frac{M+l}{2}} \right) \right|$. For each $M$, let $k_{M} \in \{ 1,\dots, M-1 \}$ such that $g_M(k_M)$ achieves the maximum among all $g_M (k)$. Assume that $ \frac{M}{r}=\frac{M}{2N+1} \to d \in [0,\frac{1}{2}]$ and $\frac{k_M}{r} \to k_{d}$ as $r \to \infty$. Then we have
$$ \lim_{r \to \infty}\frac{1}{r} \log( g_M (k_M) ) = -\frac{1}{2\pi}\left( \Lambda(2\pi(k_d -d)) + \Lambda(2\pi(k_d +d)) \right) \leq \frac{\Vol(\mathbb{S}^3 \backslash 4_1)}{4\pi}.$$
Furthermore, the equality holds if and only if $(s=2d=1$ and $2k_d = \frac{5}{6})$ or $(s=2d=\frac{1}{2}$ and $2k_d = \frac{1}{3})$.
\end{lemma}

By Lemma~\ref{uppbd}, since $j_d$ depends continuously on $d$, there exists a small neighborhood of $\frac{1}{2}$ such that for any $d$ in that neighborhood, $j_d$ is close to $\frac{5}{12}$. In other word, we may choose a small $\zeta$ to ensures that for any $1-\zeta<s\leq 1$, the maximum terms among all $g_M(j)$ appears in the second summation. Furthermore, the growth rate of the first summation will then be strictly less than a multiple (a number in $(0,1)$) of the growth rate of the second one. As a result, the first summation decays exponentially when it is compared to the second one.

To conclude, from Proposition~\ref{tanapp2}, Proposition~\ref{exprepn2} and Theorem~\ref{FSA}, we have
\begin{align}\label{asymch1}
&J_{M}\left(4_1,q \right)\stackrel[M \to \infty]{\sim}{ } \frac{S_{\gamma}(-\pi - (2a-1)\gamma)}{S_{\gamma}(\pi-(2a+1)\gamma)}
 (N+\frac{1}{2})^{1/2}e^{-a\gamma i} \times \frac{\sqrt{2\pi}\exp\left((N+\frac{1}{2})\tilde{\Phi}^{(s)}_{M}\left(z^{(s)}_{M}\right)\right)}{\sqrt{\tilde{\Phi}_{M}^{(s)''}(z^{(s)}_M)} }
\end{align}

By Lemma~\ref{Sratio}, we have
$$ \frac{S_{\gamma}(-\pi - (2a-1)\gamma)}{S_{\gamma}(\pi - (2a+1)\gamma)} = \frac{-2}{e^{-2a\gamma i}-1}= \frac{1}{ie^{-a\gamma i}\sin(a\gamma)}   \stackrel[M \to \infty]{\sim}{ }  \frac{1}{i e^{-a\gamma i} \sin (s\pi)}$$

Altogether we have
\begin{align}
J_{M}\left(4_1,q \right)
\stackrel[M \to \infty]{\sim}{ } &\frac{1}{i\sin(a\gamma)}
 (N+\frac{1}{2})^{1/2} \frac{\sqrt{2\pi}\exp\left((N+\frac{1}{2})\tilde{\Phi}^{(s)}_{M}\left(z^{(s)}_{M}\right)\right)}{\sqrt{{\tilde{\Phi}}_{M}^{(s)''}(z^{(s)}_M)} } \label{normal} \\
\stackrel[M \to \infty]{\sim}{ } &\frac{1}{i\sin(s\pi)}
 (N+\frac{1}{2})^{1/2} \frac{\sqrt{2\pi}\exp\left((N+\frac{1}{2})\tilde{\Phi}^{(s)}_{M}\left(z^{(s)}_{M}\right)\right)}{\sqrt{{\tilde{\Phi}}_{M}^{(s)''}(z^{(s)}_M)} }\label{Jalimit1f}
\end{align}

Now we explore to the saddle point equation in more detail. By direct computation one can see that the saddle point equation is given by
\begin{align*}
\beta_{M}\omega_M^2 - (\beta_M^2 +1 - \beta_M)\omega_M + \beta_M = 0 ,
\end{align*}
where $\ds \beta_M= e^{2\pi i(\frac{M}{N+\frac{1}{2}})}$ and $\ds \omega_M = e^{2\pi iz^{(s)}_{M}}$. This is exactly Equation~(\ref{saddeqn1}) and we complete the proof of Theorem~\ref{mainthm2}.

Here we give a remark on (\ref{Jalimit1f}) that will be used to find the AEF for the TV invariant later. Note that the suitable solution of the saddle point equation is given by
$$ \omega_M = \frac{(\beta_M^2 +1 - \beta_M) - \sqrt{(\beta_M^2 + 1 - 3\beta_M)(\beta_M^2 + 1 + \beta_M)}}{2\beta_M} .$$

Let $\omega$ be the solution of the following equation
$$ \omega= \frac{(\beta^2 +1 - \beta) - \sqrt{(\beta^2 + 1 - 3\beta)(\beta^2 + 1 + \beta)}}{2\beta} $$
with $\ds \beta= e^{2\pi i s}$ and $\ds \omega = e^{2\pi i z(s)}$. Define the function $\Theta(s)$ by
\begin{align*}
\Theta(s)
&= \tilde{\Phi}^{(s)}_{\infty}(z(s))\\
&=\frac{1}{2\pi i}\left[\operatorname{Li_{2}}\left(e^{-2\pi iz(s) + 2\pi i s}\right)- \operatorname{Li_{2}}\left(e^{ 2\pi iz(s) + 2\pi i s}\right)\right] +2 \pi i\left( 1 - s \right)z(s).
\end{align*}

Then we can see that $\Theta(s)$ depends smoothly on $s$ with $\Theta(\frac{M}{N+1/2}) = \tilde{\Phi}^{(s)}_{M}\left(z^{(s)}_{M}\right)$.

Similarly, for the evaluation of the second derivative at the saddle point, define the function $\Xi(s)$ by
\begin{align*}
\Xi(s) = 2\pi i e^{2\pi i s} (e^{-2\pi i z(s)} - e^{2\pi i z(s)}).
\end{align*}

Then by using the property that $\dfrac{d}{dz} \tilde{\Phi}^{(s)}_{M} (z_M) = 0$, one can verify that $\Xi(\frac{M}{N+1/2}) =  \tilde{\Phi}^{(s)''}_{M}\left(z^{(s)}_{M}\right)$. Note that since $ z(1) = \frac{5}{6}$, we also have $\Xi(1) = 2\pi\sqrt{3}$.

\subsection{An upper bound for the AEF of the $M$-th colored Jones polynomials of the figure eight knot at $(N+\frac{1}{2})$-th root of unity with $s$ closes to $\frac{1}{2}$}

In this subsection we are going to find an upper bound for the AEF of the $M$-th colored Jones polynomials at $(N+\frac{1}{2})$-th root of unity with the condition that
$$ s=2d =\lim_{N \to \infty}\frac{M}{N+1/2} \in \left(\frac{1}{2} - \delta, \frac{1}{2} + \delta\right),$$
where $\delta>0$ is a small number that will be clarified later.

First of all, we apply the functional equation of the quantum dilogarithm with the values
$$\ds \gamma=\frac{2\pi }{2N+1}\,,\quad\xi = 2\pi i\quad\text{and}\quad z=2\left(M - \frac{N+\frac{1}{2}}{2} - l \right)\gamma $$
Note that $\ds \frac{\xi}{N+\frac{1}{2}}=2i\gamma $. Moreover, we have
\begin{align}\label{F3}
\prod_{l=1}^{k}\left(1-e^{\frac{M-l}{N+1/2}\xi}\right) = \frac{S_{\gamma}\left(2\left(M - \frac{N+1/2}{2}\right)\gamma - (2k+1)\gamma\right)}{S_{\gamma}\left(2\left(M - \frac{N+1/2}{2}\right)\gamma - \gamma\right)}
\end{align}
Similarly, putting $z=2\left(M - \frac{N+\frac{1}{2}}{2} + l \right)i\gamma$, we have
\begin{align}\label{F4}
\prod_{l=1}^{k}\left(1-e^{\frac{M+l}{N+1/2}\xi}\right) = \frac{S_{\gamma}\left(2\left(M - \frac{N+1/2}{2}\right)\gamma +\gamma\right)}{S_{\gamma}\left(2\left(M - \frac{N+1/2}{2}\right)\gamma + (2k+1)\gamma\right)}
\end{align}

From (\ref{F3}) and (\ref{F4}), we have

\begin{align}
J_{M}(4_1,q) 
&= 1 + \frac{S_{\gamma}(2(M-\frac{N+1/2}{2})\gamma + \gamma)}{S_{\gamma}(2 (M-\frac{N+1/2}{2})\gamma - \gamma)}\sum_{k=1}^{M-1}e^{-\frac{2\pi k Mi}{N+1/2} }\frac{S_{\gamma}(2(M-\frac{N+1/2}{2})\gamma - (2k+1)\gamma)}{S_{\gamma}(2 (M-\frac{N+1/2}{2})\gamma + (2k+1)\gamma)}
\end{align}

Define
\begin{align*}
g_{M}(z) = \exp\left( -2\pi M z\right) \frac{S_{\gamma}\left(2\pi\left(\frac{M}{N+1/2} - \frac{1}{2}\right) - 2\pi z \right)}{S_{\gamma}\left(2\pi\left(\frac{M}{N+1/2} - \frac{1}{2}\right) + 2\pi z\right)}
\end{align*}

Then we have

\begin{align}
J_{M}(4_1,q) 
&= 1 + e^{\frac{M\pi}{N+1/2}}\frac{S_{\gamma}(2(M-\frac{N+1/2}{2})\gamma + \gamma)}{S_{\gamma}(2 (M-\frac{N+1/2}{2})\gamma - \gamma)}\sum_{k=1}^{M-1} g_M\left( \frac{2k+1}{2N+1} \right)
\end{align}

Recall from Lemma~\ref{uppbd} that for $s\sim \frac{1}{2}$, the maximum value is attained at $2k_d = \frac{1}{3}$. By continuity, there exists $\delta>0$ such that whenever $|s-\frac{1}{2}|<\delta$, the maximum value is attained in $2k_d\in(\frac{7}{24},\frac{9}{24})$. Therefore, we only need to consider those $k$ satisfying $\frac{2k+1}{2N+1} \in C = [\frac{5}{24},\frac{11}{24}]$. 

Next, we follow the argument of the proof of Proposition 4.2 in \cite{TO16}. Let $N(C)$ be a small neighborhood of $C$ in $\RR$ such that 
$$ \frac{2k+1}{2N+1} \in C \iff \frac{2k+1}{2N+1}  \in N(C)$$
Define a smooth bump function $b_M: \RR \to \RR$ satisfying
\begin{align*}
b_M(x) =
\begin{cases}
1 & \text{if }s \in C\\
0 & \text{if } s \not\in N(C)
\end{cases}
\end{align*} 
and $b_M(x) \in [0,1]$ on $N(C)\backslash C$. Then we have a smooth function $F_M: \RR \to \RR^2$ defined by 
$$F_M(x) = b_M\left(\frac{2x+1}{2N+1}\right)g_M\left(\frac{2x+1}{2N+1}\right)$$
By the Poisson Summation Formula, we have
\begin{align}
\sum_{k : \frac{2k+1}{2N+1} \in C} g_M\left( \frac{2k+1}{2N+1} \right)
=\sum_{k \in \ZZ}  F_M\left( k \right) 
=\sum_{k \in \ZZ} \widehat{F_M}(k)
\end{align}
where 
\begin{align*}
\widehat{F_M}(k) 
&= \int_{\RR} F_M(v)e^{2\pi i k v} d v \\
&= \frac{1}{N+1/2}\int_{\RR} (-1)^k F_M \left( \frac{(2N+1)x-1}{2} \right) e^{2\pi ki (N+1/2) x} dx \\
&= \frac{(-1)^k}{N+1/2}\int_{\RR}  b_M(x)g_M(x)e^{2\pi ki (N+1/2) x} dx \\
&= \frac{(-1)^k}{N+1/2}\int_{C} g_M(x)e^{2\pi ki (N+1/2) x} dx + \frac{(-1)^k}{N+1/2}\int_{N(C)\backslash C} b_M(x)g_M(x)e^{2\pi ki (N+1/2) x} dx
\end{align*}

From the above discussion, since the maximum point lies on the first integral, the second integral can be ignored. Besides, recall that for $|\Re(z)| < \pi$, the quantum dilogarithm can be expressed by [Equation (4.2) in \cite{AH06}]
$$ S_{\gamma} (z) = \exp \left( \frac{1}{2i\gamma}\operatorname{Li_2}(-e^{iz}) + I_\gamma (z)   \right) =\exp \left( \frac{N+\frac{1}{2}}{2\pi i}\operatorname{Li_2}(-e^{iz}) + I_\gamma (z)   \right)  ,$$
Thus,
\begin{align*}
&g_M(w)e^{2\pi ki (N+1/2) w}\\
&= \exp\left( -2\pi M w \right) \frac{S_{\gamma}\left(2\pi\left(\frac{M}{N+1/2} - \frac{1}{2}\right) - 2\pi z \right)}{S_{\gamma}\left(2\pi\left(\frac{M}{N+1/2} - \frac{1}{2}\right) + 2\pi z\right)} \times e^{2\pi ki (N+1/2) w }\\
&= \exp\left( (N+1/2) \left( \Phi^{(s)}_M(w) + 2\pi i(k-1)w\right) \right) \\
&\qquad \times \exp\left( I_\gamma\left(2\pi \left(\frac{M}{N+1/2}-\frac{1}{2}\right) - 2\pi w\right) - I_\gamma\left(2\pi \left(\frac{M}{N+1/2}-\frac{1}{2}\right) + 2\pi w\right)\right),
\end{align*}
where we recall again that the potential function $\Phi^{(s)}_M(w)$ and its limiting function $\Phi^{(s)}_\infty(w)$ are given by
\begin{align*}
\tilde{\Phi}^{(s)}_{M}(z)
=&\frac{1}{2\pi i}\left[\operatorname{Li_{2}}\left(e^{-2\pi iz + 2\pi i\left(\frac{M}{N+\frac{1}{2}}\right)}\right) - \operatorname{Li_{2}}\left(e^{ 2\pi iz + 2\pi i\left( \frac{M}{N+\frac{1}{2}}\right)}\right)\right] \\
& \qquad+2\pi i\left( 1 - \frac{M}{N+1/2} \right)z\\
\tilde{\Phi}^{(s)}_{\infty}(z)=&\frac{1}{2\pi i}\left[\operatorname{Li_{2}}\left(e^{-2\pi iz + 2\pi i s}\right)- \operatorname{Li_{2}}\left(e^{ 2\pi iz + 2\pi i s}\right)\right] +2 \pi i\left( 1 - s \right)z
\end{align*}

Recall the lemma 3 in \cite{AH06} that there exist $A,B>0$ dependent only on $R$ such that if $|\operatorname{Re}(z)|<\pi$, we have
$$ |I_{\gamma}(z)| \leq A\left(\frac{1}{\pi-\operatorname{Re}(z)} + \frac{1}{\pi + \operatorname{Re}(z)}\right)|\gamma| + B(1+e^{-\operatorname{Im}(z)R})|\gamma| $$

In particular, the term 
$$\exp\left( I_\gamma\left(2\pi \left(\frac{M}{N+1/2}-\frac{1}{2}\right) - 2\pi w\right) - I_\gamma\left(2\pi \left(\frac{M}{N+1/2}-\frac{1}{2}\right) + 2\pi w\right)\right)$$
has no exponential growth. Next, for the term, 
$$\exp\left( (N+1/2) \left( \Phi^{(s)}_M(w) + 2\pi i(k-1)w\right) \right),$$
note that for $s = \frac{1}{2}$ and $w\in C$, the real part of $\Phi^{(s)}_\infty(w) + 2\pi i(k-1)w$ attains its maximum when $w=\frac{1}{3}$ with maximum value
$$\Re \left[\Phi^{(s)}_\infty\left(\frac{1}{3}\right) + 2\pi i(k-1)\left(\frac{1}{3}\right)\right] = \Re \Phi^{(s)}_\infty\left(\frac{1}{3}\right) = \frac{1}{2\pi} \Vol(\mathbb{S}^3 \backslash 4_1)  $$
Besides, by direct computation, we have
\begin{align*}
\left.\frac{d}{dw}  \left( \Phi^{(s)}_M(w) \right) \right\vert_{w =\frac{1}{3}} = \pi i
\end{align*}
By continuity, we can find $r \in (0, \frac{1}{48})$ and make $\delta$ smaller such that whenever $|w-\frac{1}{3}|<r$ and $|s-\frac{1}{2}|<\delta$,
$$ \frac{\partial}{\partial y} \Re \left( \Phi^{(s)}_M(w)  \right) = - \im \frac{d}{dw}  \left( \Phi^{(s)}_M(w) \right) \in \left( -\frac{7\pi}{6}, -\frac{5\pi}{6}\right)$$
In particular, we have 
$$ \frac{\partial}{\partial y} \Re \left( \Phi^{(s)}_M(w) + 2\pi i(k-1)w\right) = -  \im\frac{d}{dw}  \left( \Phi^{(s)}_M(w) + 2\pi i(k-1)w\right) 
\begin{cases}
> \dfrac{5\pi}{6} & \text{if } k \leq 0 \\
< -\dfrac{5\pi}{6} &\text{if } k \geq 1
\end{cases}$$
WLOG assume that the real part of the potential function is increasing on the left of the maximum point and decreasing on the right of the maximum point. For $k\leq 0$, we consider the contours 
$$\tilde C : \frac{5}{24} \xrightarrow{\tilde{C_1}} \frac{1}{3} - \frac{r}{2} \xrightarrow{\tilde{C_2}} \frac{1}{3} - \frac{r}{2} - \frac{r}{2}i \xrightarrow{\tilde{C_3}}  \frac{1}{3} + \frac{r}{2} - \frac{r}{2}i \xrightarrow{\tilde{C_4}} \frac{1}{3} + \frac{r}{2} \xrightarrow{\tilde{C_5}} \frac{11}{24},$$
By assumption, 
\begin{align*}
\Re\left( \Phi^{(s)}_M(w) + 2\pi i(k-1)w\right) &\leq \Re\Phi^{(s)}_M\left(\frac{1}{3}-\frac{r}{2}\right) \quad \text{ for $w \in \tilde{C_1}$}\\
\Re\left( \Phi^{(s)}_M(w) + 2\pi i(k-1)w\right) &\leq \Re\Phi^{(s)}_M \left(\frac{1}{3}+\frac{r}{2} \right)\quad \text{ for $w \in \tilde{C_5}$}
\end{align*}
Besides, since 
$$\frac{\partial}{\partial y} \Re \left( \Phi^{(s)}_M(w) + 2\pi i(k-1)w\right) >0,$$
we have
\begin{align*}
\Re\left( \Phi^{(s)}_M(w) + 2\pi i(k-1)w\right) &\leq \Re\Phi^{(s)}_M\left(\frac{1}{3}-\frac{r}{2}\right) \quad \text{ for $w \in \tilde{C_2}$}\\
\Re\left( \Phi^{(s)}_M(w) + 2\pi i(k-1)w\right) &\leq \Re\Phi^{(s)}_M \left(\frac{1}{3}+\frac{r}{2} \right)\quad \text{ for $w \in \tilde{C_4}$}
\end{align*}
Finally, for any $w = x - \frac{r}{2}i \in \tilde{C_3}$ we have
\begin{align*}
\Re\left( \Phi^{(s)}_M\left(x - \frac{r}{2}i\right) + 2\pi i(k-1)\left(x-\frac{r}{2}i\right)\right)
&\leq \Re\left( \Phi^{(s)}_M\left(x\right) + 2\pi i(k-1)\left(x\right)\right) + \left(\frac{5\pi}{6}\right)\left(\frac{r}{2} \right) \\
&\leq \frac{1}{2\pi} \Vol(\mathbb{S}^3 \backslash 4_1) - \frac{5\pi r}{12}
\end{align*}
Altogether, we have
$$  \int_C \exp\left( (N+1/2) \left( \Phi^{(s)}_M(w) + 2\pi i(k-1)w\right) \right) dw = \int_{\tilde C} \exp\left( (N+1/2) \left( \Phi^{(s)}_M(w) + 2\pi i(k-1)w\right) \right) dw $$
with 
$$ \Re \left( \Phi^{(s)}_M(w) + 2\pi i(k-1)w\right) \leq \max\left\{ \frac{1}{2\pi} \Vol(\mathbb{S}^3 \backslash 4_1) - \frac{5\pi r}{12},  \Re\Phi^{(s)}_M\left(\frac{1}{3}-\frac{r}{2}\right)
, \Re\Phi^{(s)}_M \left(\frac{1}{3}+\frac{r}{2} \right) \right\}$$
for any $w \in \tilde C$. Similarly, for $k\geq 1$, if we consider the contour 
$$\tilde C' : \frac{5}{24} \to \frac{1}{3} - \frac{r}{2} \to \frac{1}{3} - \frac{r}{2} + \frac{r}{2}i \to  \frac{1}{3} + \frac{r}{2} + \frac{r}{2}i \to \frac{1}{3} + \frac{r}{2} \to \frac{11}{24},$$
then we have
$$  \int_C \exp\left( (N+1/2) \left( \Phi^{(s)}_M(w) + 2\pi i(k-1)w\right) \right) dw = \int_{\tilde C'} \exp\left( (N+1/2) \left( \Phi^{(s)}_M(w) + 2\pi i(k-1)w\right) \right) dw $$
with 
$$ \Re \left( \Phi^{(s)}_M(w) + 2\pi i(k-1)w\right) \leq \max\left\{ \frac{1}{2\pi} \Vol(\mathbb{S}^3 \backslash 4_1) - \frac{5\pi r}{12},  \Re\Phi^{(s)}_M\left(\frac{1}{3}-\frac{r}{2}\right)
, \Re\Phi^{(s)}_M \left(\frac{1}{3}+\frac{r}{2} \right) \right\}$$
for any $w \in \tilde C'$. This completes the proof of Lemma~\ref{mainlemma}.

\subsection{AEF for the Turaev-Viro invariants of the figure eight knot complement}

As an application of AEF's obtained in previous subsections, we are going to find out the AEF for the Turaev-Viro invariants of the figure eight knot complement as follows.

Recall from Theorem~\ref{relationship} that the TV invariants and the colored Jones polynomials of a link $L$ are related by
$$ TV_{r}\left(\mathbb{S}^3 \backslash L, e^{\frac{2\pi i}{r}}\right) = 2^{n-1}(\eta_{r}')^{2} \sum_{1\leq M \leq \frac{r-1}{2}}\left|\tilde{J}_{M}\left(L,e^{\frac{2\pi i }{N+\frac{1}{2}}}\right)\right|^2,$$
where $r=2N+1$ and $\ds \eta_{r}' = \frac{2\sin(\frac{2\pi}{r})}{\sqrt{r}}$.

For the figure eight knot $L = 4_1$, we can split the TV invariant into three parts.
\begin{align*}
&TV_{r}\left(\mathbb{S}^3 \backslash 4_1, e^{\frac{2\pi i}{r}}\right) \\
= &(\eta_{r}')^{2} \sum_{M: s \in (1-\zeta,1]}\left|\tilde{J}_{M}\left(L,e^{\frac{2\pi i }{N+\frac{1}{2}}}\right)\right|^2 + \sum_{M: s\in (\frac{1}{2}-\delta, \frac{1}{2}+\delta)}\left|\tilde{J}_{M}\left(L,e^{\frac{2\pi i }{N+\frac{1}{2}}}\right)\right|^2 \\
&+(\eta_{r}')^{2} \sum_{\substack{1\leq M \leq N,\\ s \notin (\frac{1}{2}-\delta, \frac{1}{2}+\delta)\\ s \notin (1-\zeta,1]}}\left|\tilde{J}_{M}\left(L,e^{\frac{2\pi i }{N+\frac{1}{2}}}\right)\right|^2
\end{align*}

The last summation can be estimated by using Lemma~\ref{mainlemma} and Lemma~\ref{uppbd} respectively. Using the same arguments as in the proof of Theorem~\ref{mainthm2}, we can see that the growth rate of the last term is strictly less than some multiple (a number in $(0,1)$) of that of the first and the second summations. As a result,
\begin{align*}
TV_{r}\left(\mathbb{S}^3 \backslash 4_1, e^{\frac{2\pi i}{r}}\right) 
\stackrel[N \to \infty]{\sim}{ }&(\eta_{r}')^{2}  \sum_{M: s \in (1-\zeta,1]}\left|\tilde{J}_{M}\left(L,e^{\frac{2\pi i }{N+\frac{1}{2}}}\right)\right|^2 
\end{align*}

Note that each summands satisfy the condition in the previous subsections. To apply the formulas obtained in previous section, we need the following lemma.

\begin{lemma}\label{asymchange}
For each $M \in \NN$, let $a^{M}_{N}$ and $b^{M}_{N}$ be two sequences of positive real numbers such that
$|a^{M}_{N} -  b^{M}_{N}| \leq b^{M}_N K(N)$, where $K(N)$ is a sequence of positive real numbers independent on $M$ such that $\ds \lim_{N \to \infty}K(N) = 0$. Then we have
$$ \sum_{M=1}^{N} a^{M}_{N}  \stackrel[N \to \infty]{\sim}{ } \sum_{M=1}^{N} b^{M}_{N}  .$$
\end{lemma}

In (\ref{asymch1}), the error term $O(\frac{\log N}{N})$ comes from Proposition~\ref{exprepn2} and Theorem~\ref{FSA}. From the proof of Proposition~\ref{exprepn2} and Theorem~\ref{FSA} we can see that the error depends continuously on the functions $\tilde\Phi_M(z)$. Since our functions $\tilde\Phi^{(s)}_M(z)$ converges uniformly to analytic functions $\tilde\Phi^{(s)}_0(z)$, the error terms can be controlled uniformly.

Next, since
\begin{align*}
\frac{e^{\frac{\pi i M}{N+1/2}}-e^{-\frac{\pi i M}{N+1/2}}}{e^{\frac{\pi i}{N+1/2}}-e^{-\frac{\pi i}{N+1/2}}} 
= \frac{\sin \left( \frac{\pi M}{N+1/2} \right)}{\sin \left( \frac{\pi}{N+1/2} \right)}
\stackrel[N \to \infty]{\sim}{ } \frac{N+1/2}{\pi} \sin(a\gamma)
\end{align*}
by (\ref{normal}), we have
\begin{align}
\tilde{J}_{M}\left(4_1,q \right)
&= \frac{e^{\frac{\pi i M}{N+1/2}}-e^{-\frac{\pi i M}{N+1/2}}}{e^{\frac{\pi i}{N+1/2}}-e^{-\frac{\pi i}{N+1/2}}} \times \tilde{J}_{M}\left(4_1,q \right) \notag \\
&\stackrel[M \to \infty]{\sim}{ } \frac{1}{\pi i}
 (N+\frac{1}{2})^{3/2} \frac{\sqrt{2\pi}\exp\left((N+\frac{1}{2})\tilde{\Phi}^{(s)}_{M}\left(z^{(s)}_{M}\right)\right)}{\sqrt{{\tilde{\Phi}}_{M}^{(s)''}(z^{(s)}_M)} }
\end{align}

Therefore, by Lemma~\ref{asymchange}, we have
\begin{align*}
&\quad\text{ } \sum_{M:s \in (1-\zeta,1]}\left|\tilde{J}_{M}\left(L,e^{\frac{2\pi i }{N+\frac{1}{2}}}\right)\right|^2 \\
&=\frac{2(N+1/2)^3}{\pi} \sum_{M :s \in (1-\zeta,1]} 
\left| \frac{\exp\left((2N+1)\tilde\Phi^{(s)}_{M}\left(z^{(s)}_{M}\right)\right)}{\tilde\Phi^{(s)''}_{M}(z^{(s)}_M) } \right|\left( 1 + O \left(\frac{\log N}{N} \right) \right)
\end{align*}

Note that the sum can be expressed in the form

\begin{align*}
&\quad \text{ }\sum_{M:s \in (1-\zeta,1]}\left|\tilde{J}_{M}\left(L,e^{\frac{2\pi i }{N+\frac{1}{2}}}\right)\right|^2 \\
&=\frac{2(N+1/2)^3}{\pi} \sum_{M :s \in (1-\zeta,1]} \left| \frac{\exp\left((2N+1)\tilde\Phi^{(s)}_{M}\left(z_{M}\right)\right)}{\tilde\Phi^{(s)''}_{M}(z_M) } \right|\left( 1 + O \left(\frac{\log N}{N} \right) \right)\\
&= \frac{2(N+1/2)^3}{\pi} \sum_{M :s \in (1-\zeta,1]} \left| \frac{\exp\left((2N+1)\Theta\left(\frac{M}{N+1/2}\right)\right)}{\Xi\left(\frac{M}{N+1/2}\right) } \right|  \left( 1 + O \left(\frac{\log N}{N} \right) \right),\\
\end{align*}

where the functions $\Theta(z)$ and $\Xi(s)$ are defined in previous subsections by
\begin{align*}
\Theta(s)
&= \tilde{\Phi}^{(s)}_{\infty}(z(s))\\
&=\frac{1}{2\pi i}\left[\operatorname{Li_{2}}\left(e^{-2\pi iz(s) + 2\pi i s}\right)- \operatorname{Li_{2}}\left(e^{ 2\pi iz(s) + 2\pi i s}\right)\right] +2 \pi i\left( 1 - s \right)z(s) \\
\Xi(s)
&= 2\pi i e^{2\pi i s} (e^{-2\pi i z(s)} - e^{2\pi i z(s)}).
\end{align*}

Next, we want to change the sum into integral. This can be done by the following proposition.
\begin{proposition}\label{Rietoint}
Let $f(z)$ be an analytic function defined on a domain $D$ containing $[a,b]$. Assume that
\begin{enumerate}
\item $x_{crit}\in [a,b]$ is the only critical point of $\Re f$ along $[a,b]$ on which $\Re f(z)$ attains its maximum;
\item $x_{crit}$ is non-degenerate with $(\text{Re}f)''(x_{crit})<0$.
\end{enumerate}
Then for any positive $C^1$ function $h(x)$ on $[a,b]$, we have the following asymptotic equivalence:
\begin{align*}
&\quad \text{ } \int_{a}^{b} h(x) \left| \exp \left( \left(N+\frac{1}{2} \right) f(x) \right) \right| dx \\
&=\sum_{\substack{k=1, \\\\ a + \frac{2k}{2N+1}\leq b}} \left(\frac{1}{N+1/2}\right) h\left(\frac{2k+1}{2N+1}\right) \left| \exp\left(  \left(N+\frac{1}{2}\right)  f \left(\frac{2k+1}{2N+1}\right)\right) \right| \\
&\qquad \qquad \times \left(1 + O\left(\frac{1}{(N+1/2)^{1/3}} \right) \right).
\end{align*}
\end{proposition}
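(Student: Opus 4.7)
The plan is to show that both sides of the claimed equivalence reduce, with matching multiplicative error $1+O(n^{-1/3})$, to the same standard Laplace leading term, where $n=N+\frac{1}{2}$. Write $\Delta=1/n$ and $F(x)=|\exp(nf(x))|=\exp(n\Re f(x))$, and choose a cutoff $\delta_n = n^{-4/9}$ to split $[a,b]=I_{near}\cup I_{far}$ with $I_{near}=[x_{crit}-\delta_n,x_{crit}+\delta_n]\cap[a,b]$. By hypotheses~(i) and~(ii), $\Re f(x) \leq \Re f(x_{crit}) - c\,(x-x_{crit})^2$ on $[a,b]$ for some $c>0$, so the contribution from $I_{far}$ to both the integral and the Riemann sum is at most $\mathrm{const}\cdot\exp(-c\,n^{1/9})F(x_{crit})$, which is exponentially smaller than the anticipated leading term of size $n^{-1/2}F(x_{crit})$.

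On $I_{near}$, I would Taylor-expand $\Re f$ and $h$ about $x_{crit}$. The cubic remainder satisfies $|n\cdot(x-x_{crit})^3|\leq n\delta_n^3 = n^{-1/3}$ throughout $I_{near}$, so it enters as a factor $1+O(n^{-1/3})$. After the rescaling $u=(x-x_{crit})\sqrt{n}$, both the integral and the Riemann sum approximate the same truncated Gaussian expression
\[
h(x_{crit})\,F(x_{crit})\int_{-\delta_n\sqrt{n}}^{\delta_n\sqrt{n}} \exp\!\Bigl(\tfrac{1}{2}(\Re f)''(x_{crit})\,u^2\Bigr)\,\frac{du}{\sqrt{n}},
\]
which evaluates to $h(x_{crit})F(x_{crit})\sqrt{2\pi/(n|(\Re f)''(x_{crit})|)}\cdot(1+O(n^{-1/3}))$. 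The discrete version has rescaled spacing $\sqrt{n}\Delta = n^{-1/2}$; since the integrand is smooth and rapidly decaying, the Riemann sum converges to the Gaussian integral with error $O(n^{-1})$ by standard Euler--Maclaurin, which is dominated by the $O(n^{-1/3})$ Taylor error. Dividing the two expressions then yields the claimed asymptotic factor.

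The main obstacle is balancing the three competing error sources: the exponential tail from $I_{far}$ needs $n\delta_n^2\to\infty$; the cubic Taylor remainder needs $n\delta_n^3=O(n^{-1/3})$; and the Riemann-to-integral approximation needs $\sqrt{n}\Delta=n^{-1/2}$ to be small. These are simultaneously compatible only for $\delta_n$ in a narrow band around $n^{-4/9}$. A secondary technical point is that one must work throughout with the real-analytic function $\Re f$ rather than with $f$, because the modulus $|\exp(nf(x))|$ precludes a direct application of the holomorphic saddle-point machinery (Theorem~\ref{FSA}); the non-degeneracy hypothesis~(ii) is precisely what supplies the control on $(\Re f)''$ needed for this purely real Laplace-type argument.
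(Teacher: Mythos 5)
Your argument is correct, but it follows a genuinely different route from the paper's. Writing $n=N+\frac{1}{2}$, you approximate the integral and the Riemann sum \emph{separately}, showing that each equals the same Gaussian leading term $h(x_{crit})\,e^{n\Re f(x_{crit})}\sqrt{2\pi/(n|(\Re f)''(x_{crit})|)}$ up to a factor $1+O(n^{-1/3})$, via a window of width $n^{-4/9}$, a cubic Taylor remainder of size $n\delta_n^3=n^{-1/3}$, and a Riemann-sum-versus-integral estimate for the rescaled Gaussian. The paper instead estimates the \emph{difference} of the two sides directly: it decomposes the error into cell errors $E(k)$ over subintervals of length $1/n$, discards the cells farther than $n^{-1/3}$ from $x_{crit}$ by the same exponential-decay reasoning you use, and on the near cells applies the Mean Value Theorem and then bounds the auxiliary function $g(x)=|(\Re f)'(x)|\exp(n\Re f(x))$ by locating its maximum (an interior maximum forces $(\Re f)'(x_{max})=\sqrt{-(\Re f)''(x_{max})/n}$, so $g=O(n^{-1/2})e^{n\Re f(x_{crit})}$), which gives a near-region error of order $n^{-5/6}e^{n\Re f(x_{crit})}$ against a leading term of order $n^{-1/2}e^{n\Re f(x_{crit})}$ --- hence the exponent $1/3$. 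Your route is more modular (its only inputs are the standard Laplace expansion and a discretization estimate) and makes the origin of the $1/3$ transparent; the paper's route never Taylor-expands the exponent and produces the error as a direct difference bound, at the price of the somewhat delicate interior/boundary case analysis for $g$. Two points to tighten in your write-up: your $O(n^{-1})$ Euler--Maclaurin claim requires noting that the first-order boundary terms are exponentially small at the cutoff (alternatively, the cruder total-variation bound $O(n^{-1/2})$ suffices, being still dominated by $O(n^{-1/3})$); and since the paper actually invokes the proposition with $x_{crit}$ at an endpoint of $[a,b]$ (the $s=1$ sum), you should remark that in that case both sides reduce to the same one-sided truncated Gaussian, so the ratio argument goes through unchanged.
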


By Proposition~\ref{Rietoint}, we have
\begin{align*}
\sum_{M :s \in (1-\zeta,1]} \left| \frac{\exp\left((2N+1)\Theta\left(\frac{M}{N+1/2}\right)\right)}{\Xi\left(\frac{M}{N+1/2}\right) } \right| 
\stackrel[N \to \infty]{\sim}{ } (N+\frac{1}{2})\int_{1-\zeta}^{1} \left| \frac{\exp\left((2N+1)\Theta(s)\right)}{\Xi(s) } \right| ds
\end{align*}

To find out the value of $\Re \Theta ''(1)$, recall that $z(s)$ satisfies the equation
\begin{align*}
\omega = \frac{(\beta^2 + 1 - \beta) - \sqrt{(\beta^2+1-3\beta)(\beta^2+1+\beta)}}{2\beta},
\end{align*}
where $\beta = e^{2\pi i s}$ and $\omega = e^{2\pi i z(s)}$. Note that $\beta(1)=1$, $\beta'(1) = 2\pi i$ and $\omega'(1) = 2\pi i z'(1) e^{5\pi i /3}$.
Differentiate both sides of the equation with respect to $s$ and put $s=1$, we can check that $z'(1)=0$. Furthermore, when $s=1$, we have
$$ z(1)= \frac{5}{6} \text{ and } \log(1-e^{-2\pi i z(s) + 2\pi i s})+ \log(1-e^{2\pi i z(s) + 2\pi i s}) =0$$
Therefore, from direct calculation, one can show that
$$ \Re \Theta'( 1 )  = 0 \text{ and } \Re \Theta'' (1) = -2\sqrt{3}\pi<0$$

As a result, by Laplace's method we have

\begin{align}\label{s1}
\sum_{M:s \in (1-\zeta,1]}\left|\tilde{J}_{M}\left(L,e^{\frac{2\pi i }{N+\frac{1}{2}}}\right)\right|^2 
&\stackrel[N \to \infty]{\sim}{ }  \frac{1}{2}\frac{2(N+1/2)^4}{\pi}  \frac{1}{\Xi(1)} \sqrt{\frac{2\pi}{(2N+1) |\Re\Theta'' (1)|}} \exp \left( (2N+1)\Theta(1)\right)\notag\\
&\hspace{8pt} = \hspace{8pt} \pi^{-1/2} \frac{1}{(2\pi\sqrt{3})^{3/2}} (N+\frac{1}{2})^{7/2}  \exp \left( (2N+1)\frac{\Vol(\mathbb{S}^3 \backslash 4_1)}{2\pi}\right)
\end{align}

Note that there is an extra $\frac{1}{2}$ in the above formula since the maximum point lies on the boundary. Overall, the AEF of the Tureav-Viro invariant of the figure eight knot is given by

\begin{align*}
TV_{r}\left(\mathbb{S}^3 \backslash 4_1, e^{\frac{2\pi i}{r}}\right)
&\stackrel[N \to \infty]{\sim}{ }  (\eta_{r}')^{2} \frac{r^{3/2}}{2} \frac{\pi^{-1/2}}{\sqrt{2}} \frac{1}{(2\pi\sqrt{3})^{3/2}} (\frac{r}{2})^2  \exp \left( \frac{r}{2\pi}\Vol(\mathbb{S}^3 \backslash 4_1)\right) \\
&\stackrel[N \to \infty]{\sim}{ }  (\frac{4\pi}{r^{3/2}})^{2} \frac{r^{3/2}}{2} \frac{\pi^{-1/2}}{\sqrt{2}} \frac{1}{(2\pi\sqrt{3})^{3/2}}(\frac{r}{2})^2  \exp \left(\frac{ r}{2\pi}\Vol(\mathbb{S}^3 \backslash 4_1)\right) \\
&\hspace{8pt}= \hspace{12pt} \left(\frac{1}{4}\right) \left(\frac{r}{2}\right)^{1/2} \left|\frac{2}{ \sqrt{-3}} \right|^{3/2} \exp \left(\frac{ r}{2\pi}\Vol(\mathbb{S}^3 \backslash 4_1)\right)
\end{align*}
This complete the proof of Theorem~\ref{mainthm4}.

\section{Proof of Results listed in Section~\ref{sec2}}\label{sec3}

\begin{proof}[Proof of Proposition~\ref{tanapp} and Proposition~\ref{tanapp2}]
We follow the line of the proof in \cite{WA17} and \cite{HM13} with suitable modification. First of all, recall that for $| \operatorname{Re}(z) | < \pi$, or $| \operatorname{Re}(z)|= \pi$ and $\operatorname{Im}(z)>0$,
\begin{align*}
\frac{1}{2i}\operatorname{Li}_{2}(-e^{iz})
&=\frac{1}{4}\int_{C_{R}}\frac{e^{zt}}{t^{2}\sinh(\pi t)}dt \\
\text{\hskip -6em which implies \hskip 4em}
S_{\gamma}(z)
&= \exp\left(\frac{1}{2i\gamma}\operatorname{Li}_{2}(-e^{iz}) + I_{\gamma}(z)\right) \\
&= \exp\left(\frac{M+a}{\xi}\operatorname{Li}_{2}(-e^{iz}) + I_{\gamma}(z)\right), \\
\text{\hskip -7em where \hskip 7.5em}
I_{\gamma}(z) &= \frac{1}{4}\int_{C_{R}}\frac{e^{zt}}{t\sinh(\pi t)} \left(\frac{1}{\sinh(\gamma t)}-\frac{1}{\gamma t}\right)dt\,.
\end{align*}

Recall that our function $g_{M}$ is given by
\begin{align*}
g_{M}(z) = \exp\left(-(M+a)\left(u-\frac{a\xi}{M+a}\right)z\right) \frac{S_{\gamma}(\pi - iu + i\xi z  + i\xi (\frac{a}{M+a}))}{S_{\gamma}(-\pi - iu - i \xi z + i\xi (\frac{a}{M+a}))}
\end{align*}

Substituting the above equation for $S_{\gamma}$ into the definition of $g_{M}$ leads to
\begin{align*}
\lefteqn{ g_{M}(z) = \exp\left[-(M+a)\left(u-\frac{a\xi}{M+a}\right)z \right] } \\
&\exp\left[\frac{M+a}{\xi}\left(\operatorname{Li}_{2}(e^{u- z\xi - \frac{a\xi}{M+a})})  - \operatorname{Li}_{2}(e^{u+z\xi - \frac{a\xi}{M+a}})  )\right)\right] \times\\
& \exp \left[ I_{\gamma}(\pi - iu + i\xi z + i\xi (\frac{a}{M+a})) - I_{\gamma}(-\pi - iu - i\xi z + i\xi (\frac{a}{M+a}))  \right]
\end{align*}
Let
$$\tilde{\Phi}^{(s)}_{M}(z)=\frac{1}{\xi}\left[\operatorname{Li_{2}}\left(e^{u-\left(z+\frac{a}{M+a}\right)\xi}\right) -
 \operatorname{Li_{2}}\left(e^{u + \left(z - \frac{a}{M+a}\right)\xi}\right)\right] - \left(u - \frac{a\xi}{M+a}\right)z $$
We have
\begin{align*}
g_{M}(z) =
& \exp\left[(M+a)\tilde{\Phi}^{(s)}_{M}(z)\right]\, \times \\
& \hspace*{16pt}  \exp \left[ I_{\gamma}(\pi - iu + i\xi z + i\xi (\frac{a}{M+a})) - I_{\gamma}(-\pi - iu - i\xi z + i\xi (\frac{a}{M+a}))  \right]
\end{align*}

Decompose $C_{+}(\epsilon)$ as $C_{+,1}, C_{+,2}$ and $C_{+,3}$ by $\ds \epsilon \to (\epsilon -\frac{u}{2\pi} + i) \to (1 -\epsilon -\frac{u}{2\pi} + i ) \to 1-\epsilon$ and
$C_{-}(\epsilon)$ as $C_{-,1}, C_{-,2}$ and $C_{-,3}$ by $\ds \epsilon \to (\epsilon + \frac{u}{2\pi} - i) \to (1 -\epsilon +\frac{u}{2\pi} - i ) \to 1 - \epsilon$.\\
Write $I_{\pm,i}(N)$ be the integral along $C_{\pm,i}$ respectively. We are going to show the following controls on the integrals:
\begin{align}
|I_{+,1}(N)| &< \frac{K_{+,1}}{M+a} \label{prop2.11}\\
|I_{+,2}(N)| &< \frac{K_{+,2}}{M+a} \label{prop2.12}\ \\
|I_{+,3}(N)| &< \frac{K_{+,3}}{M+a} \label{prop2.13}\\\
|I_{-,1}(N)| &< \frac{K_{-,1}}{M+a} \label{prop2.14}\ \\
|I_{-,2}(N)| &< \frac{K_{-,2}}{M+a} \label{prop2.15}\ \\
|I_{-,3}(N)| &< \frac{K_{-,3}}{M+a} \label{prop2.16}\
\end{align}

Let us observe the comparison between (i) $\tilde{\Phi}^{(s)}_{M}$, (ii) its limiting function $\tilde{\Phi}^{(s)}_\infty$ and (iii) the function $\Phi^{(1)}_\infty $ (which is $\Phi$ in \cite{HM13}).
\begin{align*}
\Phi^{(1)}_\infty (z) &= \frac{1}{\xi}(\operatorname{Li}_{2}(e^{u-\xi z}) - \operatorname{Li}_{2}(e^{u+\xi z})) - uz \\
\tilde{\Phi}_{M}^{(s)}(z) &= \frac{1}{\xi}\left[\operatorname{Li_{2}}\left(e^{u-\left(z+\frac{a}{M+a}\right)\xi}\right) -
 \operatorname{Li_{2}}\left(e^{u + \left(z - \frac{a}{M+a}\right)\xi}\right)\right] - \left(u - \frac{a\xi}{M+a}\right)z \\
\tilde{\Phi}^{(s)}_\infty (z) &= \frac{1}{\xi}\left(\operatorname{Li}_{2}(e^{u- z\xi - (1-s)\xi)})  - \operatorname{Li}_{2}(e^{u+z\xi - (1-s)\xi}) \right) -(u- (1-s))z
\end{align*}
The proof of the above estimates for the contour integrals is basically the same as the one of Proposition~3.1 in \cite{HM13}.

 To prove (\ref{prop2.11}), first we estimate $|\tan((M+a)\pi((-u/2\pi + i)t + \epsilon)) - i|$. By using (6.8) in \cite{HM13}, we have
$$ |\tan((M+a)\pi((-u/2\pi + i)t + \epsilon)) - i| \leq \frac{2e^{-2(M+a)\pi t}}{1-e^{-\pi^2 / u}} $$
So we have
\begin{align*}
&|I_{+,1}(N+n-2)|
\leq \frac{2}{1-e^{-\pi^2 / u}} \int_{0}^{1}e^{-2(M+a)\pi t} \left|g_{N}((-\frac{u}{2\pi} +i)t + \epsilon)\right| dt
\end{align*}
Recall the Lemma~6.1 in \cite{AH06} that for $|\operatorname{Re}(z)| \leq \pi$ we have
$$|I_{\gamma}(z)| \leq 2A + B|\gamma|\left(1+e^{-\operatorname{Im}(z)R}\right)$$
That means $ \exp \left[ I_{\gamma}(\pi - iu + i\xi z + i\xi (\frac{a}{M+a})) - I_{\gamma}(-\pi - iu - i\xi z + i\xi (\frac{a}{M+a}))  \right]$ is bounded above by some constant $K>0$ and
$$ \left|g_{N}((-\frac{u}{2\pi} +i)t + \epsilon)\right| \leq Ke^{(M+a) \operatorname{Re}\Phi_{N}((\frac{u}{2\pi}+i)t+\epsilon)} $$
From the proof of~(6.2) in~\cite{HM13}, we know that $\operatorname{Re}\Phi^{(1)}_{\infty}((\frac{u}{2\pi}+i)t+\epsilon)<0$ for sufficiently small~$\epsilon>0$. We have two cases:

\begin{enumerate}
\item if $a$ is fixed and $u \neq 0$, since we have
$$\tilde{\Phi}^{(s)}_M \xrightarrow{M \to \infty}  \Phi_{\infty}^{(1)},$$
for $M$ large enough we have $\operatorname{Re}\Phi^{(s)}_{M}((\frac{u}{2\pi}+i)t+\epsilon)<0$.
\item if $a = N-M+\frac{1}{2}$ and $u=0$, since we have
$$\tilde{\Phi}^{(s)}_M \xrightarrow{M \to \infty}\tilde{\Phi}^{(s)}_\infty \text{ and }\tilde{\Phi}^{(s)}_{\infty} \xrightarrow{s \to 1} \tilde{\Phi}_{\infty}^{(1)},$$
there exists a small $\zeta_1>0$ such that whenever $1-\zeta_1<s\leq 1$ and $M$ is large enough, we have $\operatorname{Re}\Phi^{(s)}_{M}((\frac{u}{2\pi}+i)t+\epsilon)\leq 0$.
\end{enumerate}

Hence we have
$$\left|I_{+,1}(N)\right| \leq \frac{2}{1-e^{-\pi^{2} / u}}K \int_{0}^{1} e^{-2(M+a)\pi t} dt \leq \frac{K_{+,1}}{M+a}$$
This establishes the inequality~(\ref{prop2.11}).
The proof of the other inequalities~(\ref{prop2.12}--\ref{prop2.16}) are basically the same.
\end{proof}

\begin{proof}[Proof of Proposition~\ref{exprepn} and Proposition~\ref{exprepn2}] Write
\begin{align*}
g_{M}(z)
&= \exp((M+a)\tilde{\Phi}^{(s)}_{M}(z)) \times \exp (\Delta(z)),
\end{align*}
where $\Delta (z) = I_{\gamma}(\pi - iu + i\xi z + i\xi (\frac{a}{M+a})) - I_{\gamma}(-\pi - iu - i\xi z + i\xi (\frac{a}{M+a})) $.
First, note that
\begin{align*}
&\quad| \int_{p(\epsilon)} g_{M}(\omega) d\omega - \int_{p(\epsilon)} \exp( (M+a) \tilde{\Phi}^{(s)}_{M}(\omega)) d\omega | \\
&= | \int_{p(\epsilon)} \exp((M+a)\tilde{\Phi}^{(s)}_{M}(\omega)) [ \exp(\Delta(\omega)) - 1] | d\omega\\
&\leq \max_{\omega \in p(\epsilon)} \{ \exp ((M+a) \operatorname{Re} \tilde{\Phi}^{(s)}_{M}(\omega) \} \int_{p(\epsilon)} | \exp(\Delta(\omega)) - 1| d\omega \\
&=  \max_{\omega \in p(\epsilon)} \{ \exp ((M+a) \operatorname{Re} \tilde{\Phi}^{(s)}_{M}(\omega) \} \int_{\epsilon}^{1-\epsilon} |h_{\gamma}(t)| dt
\end{align*}

where
\begin{align*}
h_{\gamma}(t)= &\sum_{n=1}^{\infty} \frac{1}{n!}[I_{\gamma}(\pi - iu + i\xi t + i\xi(\frac{a}{M+a}) ) -I_{\gamma}(-\pi - iu - i\xi t + i\xi(\frac{a}{M+a}))]^{n}
\end{align*}

In the above we use the analyticity of $h_{\gamma}(\omega)$ to change the contour to straight line parametrized by $t$, $t\in (\epsilon, 1-\epsilon)$.

Recall the lemma 3 in \cite{AH06} that there exist $A,B>0$ dependent only on $R$ such that if $|\operatorname{Re}(z)|<\pi$, we have
$$ |I_{\gamma}(z)| \leq A(\frac{1}{\pi-\operatorname{Re}(z)} + \frac{1}{\pi + \operatorname{Re}(z)})|\gamma| + B(1+e^{-\operatorname{Im}(z)R})|\gamma| $$

So we can find a positive constant $B'$ such that
\begin{align}\label{EEI1}
|I_{1}|&=|I_{\gamma}(\pi - iu + i\xi t + i\xi(\frac{a}{M+a})))|  \notag \\
&\leq A|\gamma| \left(  \frac{1}{2\pi(t+\frac{a}{M+a})} + \frac{1}{2\pi - 2\pi(t+\frac{a}{M+a})}\right) + B|\gamma|(1+e^{(u-u(t+\frac{a}{M+a}))R})  \notag\\
&\leq A|\gamma| \left(  \frac{1}{2\pi(t+\frac{a}{M+a})} + \frac{1}{2\pi - 2\pi(t+\frac{a}{M+a})}\right) + B'|\gamma|
\end{align}
and
\begin{align}\label{EEI2}
|I_{2}|&=|I_{\gamma}(-\pi - iu - i\xi t + i\xi(\frac{a}{M+a})|   \notag\\
&\leq A|\gamma| \left( \frac{1}{2\pi - 2\pi(t-\frac{a}{M+a})} + \frac{1}{2\pi(t-\frac{a}{M+a})}\right) + B|\gamma|(1+e^{(u-u(t-\frac{a}{M+a}))R})  \notag\\
&\leq A|\gamma|  \left( \frac{1}{2\pi - 2\pi(t-\frac{a}{M+a})} + \frac{1}{2\pi(t-\frac{a}{M+a})}\right) + B'|\gamma|
\end{align}

Let $f(t)=\dfrac{1}{t}+\dfrac{1}{1-t}$. Note that $f(t)\geq 4$ for $t \in (\epsilon,1-\epsilon)$.

From (\ref{EEI1}) and (\ref{EEI2}), we have
\begin{align*}
|I_{1}-I_{2}| \leq |\gamma| \left(\frac{A}{2\pi} \left( f \left(t+\frac{a}{M+a}\right) + f\left(t-\frac{a}{M+a}\right) \right) + B' \frac{f(t)}{4} \right)
\end{align*}

Note that for $t \in (\epsilon,1-\epsilon)$ and $N$ large,
\begin{align*}
\frac{f(t)} {f(t+\frac{a}{M+a})}
= \frac{t+\frac{a}{M+a}}{t} \times \frac{ 1 - (t+\frac{a}{M+a})}{1-t}
\geq  1 - \frac{\frac{a}{M+a}}{\epsilon}
= 1 - \frac{a}{2a+1} \geq \frac{1}{3}
\end{align*}

Similarly,
\begin{align*}
\frac{f(t)} {f(t-\frac{a}{M+a})}
= \frac{t-\frac{a}{M+a}}{t} \times \frac{ 1 - (t-\frac{a}{M+a})}{1-t}
\geq  1 - \frac{\frac{a}{M+a}}{\epsilon}
= 1 - \frac{a}{2a+1} \geq \frac{1}{3}
\end{align*}

Thus, there exists some positive constant $A'''$ such that
\begin{align*}
|I_{1}-I_{2}| \leq |\gamma|A''' f(t)
\end{align*}

As a result,
\begin{align*}
\int_{\epsilon}^{1-\epsilon} |h_{\gamma}(t)| dt
\leq \sum_{n=1}^{\infty} \frac{A''' |\gamma|^{n}}{n!} \int_{\epsilon}^{1-\epsilon} f(t)^n dt
\leq \sum_{n=1}^{\infty} \frac{A''' |\gamma|^{n}}{n!} \int_{|\gamma|}^{1-|\gamma|} f(t)^n dt
\end{align*}

Following the argument in \cite{AH06}, p.537, for $n \geq 1$ we have
$$\int_{|\gamma|}^{1-|\gamma|}  f(t)^{n}dt \leq 2^{2n+1} \int_{|\gamma|}^{1-|\gamma|}  \frac{dt}{t^{n}} $$

Also,
\begin{align*}
\int_{|\gamma|}^{1-|\gamma|}  \frac{dt}{t} &= \log(M+a) - \log(1) = \log(M+a) \text{ and } \\
\int_{|\gamma|}^{1-|\gamma|}  \frac{dt}{t^n} &= \frac{1}{n-1}\left(\frac{1}{|\gamma|^{n-1}} - 2^{n-1}\right) \leq \frac{1}{(n-1)|\gamma|^{n-1}} \text{ for $n \geq 2$}
\end{align*}

Therefore we have
\begin{align*}
\int_{\epsilon}^{1-\epsilon} |h_{\gamma}(t)| dt
&\leq  \sum_{n=1}^{\infty} \frac{1}{n!} (A''')^{n} |\gamma|^{n} \int_{|\gamma|}^{1-|\gamma|} f(t)^{n} dt \\
&\leq  2|\gamma| \left(4A''' \log(M+a) + \sum_{n=2}^{\infty} \frac{(4A''')^{n}}{(n-1)n!}\right) \\
&\leq \frac{|\xi|}{M+a} (4A''' \log(M+a) + e^{4A'''}- 4A''' -1)\\
&\leq \frac{K \log (M+a)}{M+a}
\end{align*}
\end{proof}
\pagebreak

\begin{proof}[Proof of Lemma~\ref{Sratio}]
We only prove the formula for $c = \frac{1}{2}$. The general case can be proved similarly. Note that

\begin{align*}
\frac{S_{\gamma}(-\pi - iu)}{S_{\gamma}(\pi - iu - 2\gamma)} 
&= \exp \left(\frac{1}{4} \int_{C_{R}} \frac{e^{-iut}e^{-\gamma t}}{\sinh (\pi t) \sinh (\gamma t)}(e^{-\pi t + \gamma t} - e^{\pi t - \gamma t}) \frac{dt}{t}\right) \\
&= \exp \left(\frac{1}{4} \int_{C_{R}}e^{-iut}e^{-\gamma t} \frac{\sinh(-\pi t + \gamma t)}{\sinh (\pi t) \sinh (\gamma t)} \frac{dt}{t}\right) \\
&= \exp \left(\frac{1}{2} \int_{C_{R}} \frac{e^{-iut}e^{-\gamma t}\coth(\pi t)}{t}  -  \frac{e^{-iut}e^{-\gamma t}\coth(\gamma t)}{t}  dt\right)
\end{align*}

Now we modify the proof in \cite{HM13}. For $r>0$, let $U_{i}, i = 1,2,3$ be the segments defined by $r \xrightarrow{U_{1}} r - r' i \xrightarrow{U_{2}}  -r  - r' i \xrightarrow{U_{3}}  -r$ with $r' = \frac{3\pi}{u}r$. Since the zeros of $\sinh(\pi t)$ and $\sinh(\gamma t)$  are discrete, for generic $r'$, $U_{2}$ does not pass through those singular points.

Now we want to show that for $i=1,2,3$,
\begin{align*}
\lim_{r \to \infty} \int_{U_{i}} \frac{e^{-iut}}{\sinh (\pi t) \sinh (\gamma t)}(e^{-\pi t} - e^{\pi t - 2\gamma t})\frac{dt}{t} = 0
\end{align*}

We will show the convergence on (i) $U_{1}$, (ii) $U_{3}$, (iii) $U_{2}$.\\

First of all we define 
\begin{align*}
r = \frac{(2l+\frac{3}{4})\pi}{4\pi^{2}/(N+\frac{1}{2})u + u/(N+\frac{1}{2})} \text{ where $l \in \NN$}
\end{align*}
Clearly $r \to \infty$ if and only if $l \to \infty$. The choice of $r$ helps us to avoid the pole of $\sinh(\gamma t)$ and get a good estimation of the integrals.
More precisely, for $s\in [0,r']$ we consider the four functions
\begin{align*}
p(s) = |1-e^{-2\pi(r-si)}|&,\quad q(s) = |e^{2\pi(r-si)} - 1 | \\
g(s) =  | e^{-2\gamma(r-si)} -1| \text{ and } k(s) &= |2\sinh(\gamma (r-si))|= |e^{\gamma(r-si)} - e^{-\gamma(r-si)}|
\end{align*}
In the above $g(s)$ is the distance between $e^{-2\gamma(r-si)}$ and $1$. These functions correspond to the terms appear in the integrals as shown later. Now we are going to construct lower bound for these functions. When $r$ is large,
\begin{align*}
p(s)&= |1-e^{-2\pi(r-si)}| \geq 1 - e^{-2\pi r} \geq 1/2 ;\\
q(s)&= |e^{2\pi(r-si)} - 1 | \geq e^{2\pi r} -1 \geq 1
\end{align*}
Also, one can check that $$g(s) =  | e^{-2\gamma(r-si)} -1| = | e^{R(s)}e^{i\theta(s)} - 1| ,$$
where $R(s)= \frac{us}{N+\frac{1}{2}} - \frac{2\pi r}{N+\frac{1}{2}}$ and $\theta(s)=\frac{ur}{N+\frac{1}{2}} + \frac{2\pi s}{N+\frac{1}{2}} $. Moreover, due to the choice of $r$,
\begin{itemize}
\item when $s = \frac{2\pi}{u}r$, we have $R(s)=0$, $\theta(s)=2l\pi + \frac{3\pi}{4}$;
\item when $s = \frac{2\pi}{u}r - \frac{N+\frac{1}{2}}{8}$, we have $R(s)=-\frac{u}{8}$, $\theta(s)=2l\pi +\frac{\pi}{2}$;
\item when $s = \frac{2\pi}{u}r + \frac{3(N+\frac{1}{2})}{8}$, we have $R(s)=\frac{3u}{8}$, $\theta(s)=2l\pi + \frac{3\pi}{2}$.
\end{itemize}

Since $R(s)$ and $\theta(s)$ are strictly increasing in $s$ and $g(s)$ is the distance between $ e^{R(s)}e^{i\theta(s)}$ and $1$,
\begin{itemize}
\item for $0 \leq s \leq \frac{2\pi}{u}r - \frac{N+\frac{1}{2}}{8}$, $\ds g(s) \geq \min_{|z|\leq e^{-u/8}} |z -1| = 1 - e^{-u/8} $
\item for $\frac{2\pi}{u}r - \frac{N+\frac{1}{2}}{4} \leq s \leq \frac{2\pi}{u}r + \frac{3(N+\frac{1}{2})}{8}$, since $\theta(s) \in [(2l+1)\pi + \frac{\pi}{2}, (2l+1)\pi + \frac{3\pi}{2}]$, we must have $g(s) \geq 1$.\\
\item for $\frac{2\pi}{u}r + \frac{3(N+\frac{1}{2})}{8} \leq s \leq  \frac{3\pi}{u}r$, $\ds g(s) \geq  \min_{|z|\geq e^{3u/8}} |z -1| = e^{3u/8} -1$.
\end{itemize}

Finally for $k(s)$, recall that $|\sinh(z)|^2 = \sinh^{2}(x) + \sin^{2}(y)$ for any $z = x + i y \in \CC$. Note that the function $$K(s)=|k(s)|^2=4 |\sinh(-\gamma (r-si))|^2 = 4\left|\sinh^2\left(\frac{R(s) + i \theta(s)}{2}\right)\right| = 4 \left(\sinh^2 \frac{R(s)}{2} + \sin^2 \frac{\theta(s)}{2}\right)$$ has derivative
$$K'(s) = 8(\sinh R(s) \frac{u}{N+1/2} + \sin \theta(s) \frac{2\pi}{N+1/2}) \geq 0$$
for $\theta(s) \in [2l\pi +\frac{\pi}{2}, (2l+1)\pi]$, $k(s)$ is increasing on $[2l\pi +\frac{\pi}{2}, (2l+1)\pi]$ with
$$k(s) \geq k(2l\pi +\frac{\pi}{2}) = |\sinh(-u/8)| = \sinh(u/8)$$

For $\theta(s) \notin  [2l\pi +\frac{\pi}{2}, (2l+1)\pi]$, we have
$$k(s) = |e^{\gamma(r-si)} - e^{-\gamma(r-si)}| \geq |  |e^{\gamma(r-si)}| - | e^{-\gamma(r-si)} | | \geq e^{u/8}-e^{-u/8}$$
The last inequality above is due to the fact that the function is strictly increasing on $R(s)$ and $R(s)>u/8$ when $\theta(s) \notin  [2l\pi +\frac{\pi}{2}, (2l+1)\pi]$.

To conclude, we can find positive constants $M_{1}$, $M_{2}$, $M_{3}$ and $M_{4}$  independent on $r$ such that
$$ \frac{1}{p(s)} \leq M_{1}, \quad \frac{1}{q(s)} \leq M_{2}, \quad \frac{1}{g(s)} \leq M_{3} \quad \text{and} \quad \frac{2}{k(s)}\leq M_{4}$$

Now we can get a good control of the integrals.\\
(i) On $U_{1}$,
\begin{align*}
\left|\int_{U_{1}} \frac{e^{-iut}}{\sinh (\pi t) \sinh (\gamma t)}(e^{-\pi t}) \frac{dt}{t})\right|
&\leq 4 \int_{0}^{r'} \left|\frac{e^{-iu(r-si)}}{r-si}\right| \left|\frac{e^{(-\pi ) (r-si)}}{\sinh(\pi (r-si))\sinh(\gamma (r-si))}\right| ds \\
&\leq \frac{4M_{2}M_{4}}{r} \int_{0}^{r'} e^{-us} ds\\
&= \frac{4M_{2}M_{4}}{ur}(1-e^{-ur'}) \xrightarrow{r \to \infty} 0.
\end{align*}

Similarly,
\begin{align*}
\left|\int_{U_{1}} \frac{e^{-iut}}{\sinh (\pi t) \sinh (\gamma t)}(e^{\pi t - 2\gamma t}) \frac{dt}{t})\right|
&\leq 4\int_{0}^{r'} \left|\frac{e^{-iu(r-si)}}{r-si}\right| \left|\frac{e^{(\pi - 2\gamma) (r-si)}}{\sinh(\pi (r-si))\sinh(\gamma (r-si))}\right| ds \\
&\leq \frac{4}{r} \int_{0}^{r'} e^{-us} |e^{-\gamma(r-si)}| \frac{1}{p(s)}\frac{1}{g(s)} ds\\
&\leq  \frac{4M_{1}M_{3}}{r} \int_{0}^{r'} e^{(-1 + \frac{1}{N+1/2})us - \frac{2\pi r}{N+1/2}} ds
\end{align*}

Hence
\begin{align*}
\left|\int_{U_{1}} \frac{e^{-iut}}{\sinh (\pi t) \sinh (\gamma t)}(e^{\pi t - 2\gamma t}) \frac{dt}{t})\right| 
&\leq  \frac{4M_{1}M_{3}}{r} \int_{0}^{r'} e^{(-1 + \frac{1}{N+1/2})us - \frac{2\pi r}{N+1/2}} ds\\
&\leq  \frac{4M_{1}M_{3}}{ur}(e^{(-1 + \frac{1}{N+1/2})ur' - \frac{2\pi r}{N+1/2}} - e^{ - \frac{2\pi r}{N+1/2}}) \\
& \xrightarrow{r \to \infty} 0,
\end{align*}

(ii) On $U_{3}$,
\begin{align*}
&\quad\left|\int_{U_{3}} \frac{e^{-iut}}{\sinh (\pi t) \sinh (\gamma t)}(e^{-\pi t }) \frac{dt}{t})\right|\\
&\leq 4 \int_{0}^{r'} \left|\frac{e^{-iu(-r-si)}}{-r-si}\right| \left|\frac{e^{-\pi (-r-si)}}{(e^{\pi(-r-si)}-e^{-\pi(-r-si)})(e^{\gamma(-r-si)}-e^{-\gamma(-r-si)})}\right| dt \\
&\leq \frac{4}{r} \int_{0}^{r'} e^{-us} \frac{1}{|e^{(2\pi+\gamma)(-r-si)} + e^{-\gamma{(-r-si)}} - e^{\gamma{(-r-si)}} - e^{(2\pi-\gamma)(-r-si)}|} dt
\end{align*}
Note that the modulus of the terms in the denominator are $e^{-2\pi r -\frac{us+2\pi r}{2N+1}}$, $e^{\frac{us+2\pi r}{2N+ 1}}$, $e^{-\frac{us+2\pi r}{2N+ 1}}$ and $e^{-2\pi r + \frac{us+2\pi r}{2N+1}}$ respectively. For large r, the dominant term is given by $e^{\frac{us+2\pi r}{2N+ 1}} \xrightarrow{r \to \infty} \infty$. This shows that the denominator is bounded below. So we can find some constant $M_{5}$ such that
\begin{align*}
\left|\int_{U_{3}} \frac{e^{-iut}}{\sinh (\pi t) \sinh (\gamma t)}(e^{-\pi t}) \frac{dt}{t})\right|
\leq\frac{M_{5}}{r} \int_{0}^{r'} e^{-us} dt
\leq \frac{M_{5}}{ur}(1-e^{-ur'}) \xrightarrow{r \to \infty} 0.
\end{align*}

Similarly,
\begin{align*}
&\quad\left|\int_{U_{3}} \frac{e^{-iut}}{\sinh (\pi t) \sinh (\gamma t)}(e^{\pi t - 2\gamma t}) \frac{dt}{t})\right|\\
&\leq 4 \int_{0}^{r'} \left|\frac{e^{-iu(-r-si)}}{-r-si}\right| \left|\frac{e^{(\pi - 2 \gamma)(-r-si)}}{(e^{\pi(-r-si)}-e^{-\pi(-r-si)})(e^{\gamma(-r-si)}-e^{-\gamma(-r-si)})}\right| dt \\
&\leq  \frac{4}{r} \int_{0}^{r'} e^{-us} \frac{1}{|e^{q_{1}} + e^{q_{2}} -e^{q_{3}} -e^{q_{4}}|}
\end{align*}
where $q_{1}$, $q_{2}$, $q_{3}$ and $q_{4}$ are given by
\begin{align*}
q_{1}&=3\gamma(-r-si), \quad \quad \quad \quad\quad \quad \hspace{2pt} q_{2}=(-2\pi+\gamma)(-r-si),\\
q_{3}&=-\gamma(-r-si), \quad \quad \quad \quad\quad \quad q_{4}=(-2\pi+3\gamma)(-r-si)
\end{align*}
Note that the modulus of the terms in the denominator are $e^{-\frac{3(us+2\pi r) }{2N+1}}$, $e^{2\pi r-\frac{us+2\pi r }{2N+1}}$, $e^{\frac{us+2\pi r }{2N+1}}$ and $e^{2\pi r-\frac{3(us+2\pi r) }{2N+1}}$ respectively. For large r, the dominant term is given by $e^{2\pi r-\frac{us+2\pi r }{2N+1}} \to \infty$. This shows that the denominator is bounded below. Again we can find some constant $M_{6}$ such that
\begin{align*}
\left|\int_{U_{3}} \frac{e^{-iut}}{\sinh (\pi t) \sinh (\gamma t)}(e^{\pi t - 2\gamma t}) \frac{dt}{t})\right|
&\leq \frac{M_{6}}{r} \int_{0}^{r'} e^{-us} dt \\
&\leq \frac{M_{6}}{ur}(1-e^{-ur'}) \xrightarrow{r \to \infty} 0.\\
\end{align*}

(iii) On $U_{2}$, we consider the expression
\begin{align*}
\frac{S_{\gamma}(-\pi + iu )}{S_{\gamma}(\pi - iu - 2\gamma)}
=  \exp \left(\frac{1}{2} \int_{C_{R}} \frac{e^{-iut}e^{-\gamma t}\coth(\pi t)}{t}  -  \frac{e^{-iut}e^{-\gamma t}\coth(\gamma t)}{t}  dt\right)
\end{align*}
Note that for $t=s - r'i$, $s \in [-r,r]$,
\begin{align*}
|e^{-\gamma t}|=e^{\frac{r' u - 2\pi s}{2N+1}} \leq e^{\frac{r'u + 2\pi r}{2N+ 1}} \leq e^{\frac{2u r'}{2N+1}}
\end{align*}
Write $\kappa=\alpha - \beta i$, where $\kappa = \pi$ or $\gamma$,
\begin{align*}
 \left|\int_{U_{2}} \frac{e^{-iut}\coth(\kappa t)}{t} e^{-\gamma t} dt \right|
\leq \int_{U_{2}} \left|\frac{e^{-iut}\coth(\kappa t)}{t}\right| |e^{-\gamma t}|dt 
\leq \frac{e^{-ur'(1-\frac{2 }{2N+1})}}{r'} \int_{U_{2}} \left|\coth(\kappa t)\right| dt
\end{align*}

By a similar trick as in \cite{HM13}, put $\ds \delta = \max_{-1 \leq s \leq 1} |\coth(\kappa s)| > 0$. This helps us to get away from the singularity of $\coth(s\pi)$ in the proof shown below. Now we have

\begin{align*}
\int_{U_{2}}|\coth(\kappa t)| dt 
&= \int_{-r}^{r}|\coth(s\alpha - r'\beta - (s\beta + \alpha r')i)| ds \\
&\leq  2\delta + \int_{-r}^{-1} | \frac{e^{s\alpha - r'\beta - (s\beta + \alpha r')i} + e^{-(s\alpha - r'\beta - (s\beta + \alpha r')i)}}{e^{s\alpha - r'\beta - (s\beta + \alpha r')i} - e^{-(s\alpha - r'\beta - (s\beta + \alpha r')i)}} | ds \\
&\quad \text{ }+ \int_{1}^{r} | \frac{e^{s\alpha - r'\beta - (s\beta + \alpha r')i} + e^{-(s\alpha - r'\beta - (s\beta + \alpha r')i)}}{e^{s\alpha - r'\beta - (s\beta + \alpha r')i} - e^{-(s\alpha - r'\beta - (s\beta + \alpha r')i)}} | ds \\
&\leq  2\delta + \int_{-r}^{-1}  \frac{|e^{s\alpha - r'\beta - (s\beta + \alpha r')i}| + |e^{-(s\alpha - r'\beta - (s\beta + \alpha r')i)}|}{|e^{s\alpha - r'\beta - (s\beta + \alpha r')i}| - |e^{-(s\alpha - r'\beta - (s\beta + \alpha r')i)}|}  ds \\
&\quad \text{ } + \int_{1}^{r}  \frac{|e^{s\alpha - r'\beta - (s\beta + \alpha r')i}| + |e^{-(s\alpha - r'\beta - (s\beta + \alpha r')i)}|}{|e^{s\alpha - r'\beta - (s\beta + \alpha r')i}| - |e^{-(s\alpha - r'\beta - (s\beta + \alpha r')i)}|}  ds\\
&= 2\delta +  \int_{-r}^{-1} \frac{|e^{s\alpha-r'\beta}| + |e^{-(s\alpha - r'\beta)}|}{|e^{s\alpha - r'\beta }| - |e^{-(s\alpha - r'\beta)}|}
+ \int_{1}^{r}  \frac{|e^{s\alpha-r'\beta}| + |e^{-(s\alpha - r'\beta)}|}{|e^{s\alpha - r'\beta }| - |e^{-(s\alpha - r'\beta)}|}  ds \\
&\leq  2\delta + \int_{1}^{r} \coth(s\alpha - r'\beta) ds +\int_{-r}^{-1} \coth(s\alpha - r'\beta) ds \\
&=  2\delta + \frac{\log(\sinh(\alpha r - r'\beta)) - \log(\sinh(\alpha - r'\beta))}{\alpha} \\
&\quad \text{ }+\frac{\log(\sinh(-\alpha - r'\beta)) - \log(\sinh(-\alpha r - r'\beta))}{\alpha}
\end{align*}
Hence
\begin{align*}
&\quad\left|\int_{U_{2}} \frac{e^{-iut}\coth(\kappa t)}{t} e^{-\gamma t} dt \right|\\
&\leq \frac{e^{-ur'(1-\frac{2 }{2N+1})}}{r'} [2\delta + \frac{\log(\sinh(\alpha r - r'\beta)) - \log(\sinh(\alpha - r'\beta))}{\alpha} \\
&\quad \text{ } +\frac{\log(\sinh(-\alpha - r'\beta)) - \log(\sinh(-\alpha r - r'\beta))}{\alpha}] \\
&\xrightarrow{r \to \infty} 0
\end{align*}

Let $C_{r}=[-r, -R] \cup \Omega_{R} \cup [R,r]$. Denote $U_{1}\cup U_{2} \cup U_{3}$ by $U_{123}$. By (i)-(iii) we get
\begingroup
\allowdisplaybreaks
\begin{align*}
&\quad \text{ }\int_{C_R} \frac{e^{-iut}e^{-\gamma t}}{\sinh (\pi t) \sinh (\gamma t)}(e^{-\pi t + \gamma t} - e^{\pi t - \gamma t}) \frac{dt}{t}) \\
&=\lim_{r \to \infty} \int_{C_{r}} \frac{e^{-iut}e^{-\gamma t}}{\sinh (\pi t) \sinh (\gamma t)}(e^{-\pi t + \gamma t} - e^{\pi t - \gamma t}) \frac{dt}{t})\\
&=\lim_{r \to \infty} \int_{C_{r}} (\frac{e^{-iut}\coth(\pi t)}{t} e^{-\gamma t} - \frac{e^{-iut}\coth(\gamma t)}{t} e^{-\gamma t}) dt \\
&=\lim_{r \to \infty} [\int_{U_{123} } (\frac{e^{-iut}\coth(\pi t)}{t} e^{-\gamma t} - \frac{e^{-iut}\coth(\gamma t)}{t} e^{-\gamma t} )dt \\
&\quad \text{ }-2\pi i\text{Res}(\frac{e^{-iut}\coth(\pi t)}{t}e^{-\gamma t}, t=-li) \\
&\qquad\qquad\qquad\qquad\qquad\qquad\qquad  + 2\pi i\text{Res}(\frac{e^{-iut}\coth(\gamma t)}{t}e^{-\gamma t}, t=\frac{-l\pi i}{\gamma}) ]\\
&=\lim_{r \to \infty} \int_{U_{123}} \frac{e^{-iut}e^{-\gamma t}}{\sinh (\pi t) \sinh (\gamma t)}(e^{-\pi t + \gamma t} - e^{\pi t - \gamma t}) \frac{dt}{t}) \\
&\quad \text{ }-2\pi i \sum_{l=0}^{\infty}\left[\text{Res}(\frac{e^{-iut}\coth(\pi t)}{t} e^{-\gamma t} ,t=-li) \right. \\
&\qquad\qquad\qquad\qquad\qquad\qquad\qquad \left. - \text{Res}(\frac{e^{-iut}\coth(\gamma t)}{t} e^{-\gamma t},t=\frac{-l\pi i}{\gamma})\right]\\
&=-2\pi i \sum_{l=0}^{\infty}\left[\text{Res}(\frac{e^{-iut}\coth(\pi t)}{t} e^{-\gamma t} ,t=-li) \right. \\
&\qquad\qquad\qquad\qquad\qquad\qquad\qquad \left. - \text{Res}(\frac{e^{-iut}\coth(\gamma t)}{t} e^{-\gamma t},t=\frac{-l\pi i}{\gamma})\right]
\end{align*}
\endgroup

In the above, the negative sign before the residue term is due to the negative orientation of the contour. Moreover, the term $l=0$ correspond to the residue at zero. To find out the residue at $z=0$, we consider the following series expansions:
$$ e^{-iuz} = 1 - iuz + \frac{(-iuz)^2}{2} + \dots, \quad \coth(\kappa z) = \frac{1}{\kappa z} + \frac{\kappa z}{3} + \frac{(\kappa z)^3}{45}  + \dots$$
$$ e^{-\gamma z}=1 -\gamma z + \frac{(-\gamma z)^2}{2} + \dots $$

Hence the residue (coefficient of $z^{-1}$) is given by $\ds \frac{-iu-\gamma}{\kappa}$ where $\kappa = \pi$ or $\gamma$. From this we can find that
\begin{align*}
&\quad \text{ }  \int_{C_{r}} \frac{e^{-iut}\coth(\pi t)}{t} e^{-\gamma t}\\
&= -2\pi i \sum_{l=0}^{\infty}\text{Res}(\frac{e^{-iut}\coth(\pi t)}{t} e^{-\gamma t} ,t=-li)\\
&=  -2\pi i \left(\frac{-iu-\gamma}{\pi} + \sum_{l=1}^{\infty}\text{Res}(\frac{e^{-iut}\coth(\pi t)}{t} e^{-\gamma t} ,t=-li) \right)\\
&=  -2u + 2\gamma i + 2\pi i \sum_{l=1}^{\infty}\frac{e^{l(-u+\gamma i)}}{l\pi i}\\
&=  -2u + 2\gamma i - 2\log(1-e^{-u+\gamma i})
\end{align*}

Similarly,
\begin{align*}
&\quad \text{ } \int_{C_{r}} \frac{e^{-iut}\coth(\gamma t)}{t} e^{-\gamma t} \\
&= -2\pi i \sum_{l=0}^{\infty}\text{Res}(\frac{e^{-iut}\coth(\gamma t)}{t} e^{-\gamma t} ,t=\frac{-l\pi i}{\gamma})\\
&=  -2\pi i \left(\frac{-iu-\gamma}{\gamma} + \sum_{l=1}^{\infty}\text{Res}(\frac{e^{-iut}\coth(\pi t)}{t} e^{-\gamma t} ,t=\frac{-l\pi i}{\gamma}) \right)\\
&=  -\frac{2u\pi}{\gamma} + 2\pi i + 2\pi i \sum_{l=1}^{\infty}\frac{e^{l(-\frac{u\pi}{\gamma} +\pi i)}}{l\pi i}\\
&=  -\frac{2u\pi}{\gamma} + 2\pi i - 2\log(1+e^{-\frac{u\pi}{\gamma} })
\end{align*}

Overall we have
\begin{align*}
\frac{S_{\gamma}(-\pi-iu)}{S_{\gamma}(\pi-iu-2\gamma)}
&= \frac{e^{u\pi/\gamma -\pi i} (1+e^{-u\pi / \gamma})}{e^{u-\gamma i}(1-e^{-u+\gamma i})}
=- \frac{e^{u\pi / \gamma}+1}{e^{u-\gamma i}-1}
\stackrel[N \to \infty]{\sim}{ }
-\frac{e^{2\pi i u (N+\frac{1}{2}) / \xi}}{e^{u}-1}
\end{align*}

In particular, when $u=0$,
\begin{align*}
\frac{S_{\gamma}(-\pi)}{S_{\gamma}(\pi-2\gamma)}
=\lim_{u \to 0}\frac{S_{\gamma}(-\pi-iu)}{S_{\gamma}(\pi-iu-2\gamma)}
= \frac{2}{1-e^{-\gamma i}}
\stackrel[N \to \infty]{\sim}{ }
\frac{2N+1}{\pi i}
\end{align*}

\end{proof}

\begin{proof}[Proof of Proposition~\ref{positive} and Proposition~\ref{positive2}]
From Lemma 3.5 in \cite{HM13} we know that $\operatorname{Re}\Phi^{(1)}_\infty(z^{(1)}_{\infty})>0$ for $0\leq u<\log((3+\sqrt{5})/2)$. Since $\Phi^{(s)}_{M}(z^{(s)}_{M}) \to \Phi^{(1)}_\infty(z^{(1)}_{\infty})$ as $M \to \infty$ and $s \to 1$, we get the result.
\end{proof}

\begin{proof}[Proof of Lemma~\ref{diff1}]
Recall that
\begin{align*}
\Phi_{\infty} (z) =&\frac{1}{\xi}(\operatorname{Li}_{2}(e^{u-\xi z}) - \operatorname{Li}_{2}(e^{u+\xi z})) - uz \\
\Phi_{M}(z)=&\frac{1}{\xi}\left[\operatorname{Li_{2}}\left(e^{u-\left(z+\frac{a}{M+a}\right)\xi}\right) -
 \operatorname{Li_{2}}\left(e^{u + \left(z - \frac{a}{M+a}\right)\xi}\right)\right] - uz
\end{align*}
Put $\ds y = \frac{a}{M+a}$, we have
\begin{align*}
\Phi_{M}(z) - \Phi_\infty (z) =  \frac{1}{\xi} \left[\left(\operatorname{Li}_{2}(e^{u-\xi z -\xi y)}) -\operatorname{Li}_{2}(e^{u-\xi z})\right) - \left( \operatorname{Li}_{2}(e^{u+\xi z - \xi y}) - \operatorname{Li}_{2}(e^{u+\xi z}) \right)\right]
\end{align*}
As a result, by L'Hospital's rule
\begin{align*}
&\quad \lim_{M \to \infty} (M+a)(\Phi_{M}(z) - \Phi_\infty(z)) \\
&= \left( \frac{a}{\xi} \lim_{y \to 0} \frac{ \left[\left(\operatorname{Li}_{2}(e^{u-\xi z -\xi y)}) -\operatorname{Li}_{2}(e^{u-\xi z})\right) - \left( \operatorname{Li}_{2}(e^{u+\xi z - \xi y}) - \operatorname{Li}_{2}(e^{u+\xi z}) \right)\right]}{y} \right)  \\
&= \frac{a}{\xi} \lim_{y \to 0} \frac{d}{dy}\left[\left(\operatorname{Li}_{2}(e^{u-\xi z -\xi y)}) -\operatorname{Li}_{2}(e^{u-\xi z})\right) - \left( \operatorname{Li}_{2}(e^{u+\xi z - \xi y}) - \operatorname{Li}_{2}(e^{u+\xi z}) \right)\right]\\
&= a[\log(1-e^{u-z\xi}) - \log(1-e^{u+z\xi})]
\end{align*}
\end{proof}

\begin{proof}[Proof of Lemma~\ref{diff2}]
To remove the $M$ dependence of $z_{M}$, recall that from (\ref{quadeqn}) that
\begin{equation*}
 AB\omega_M^2 - (A^2 + B^2- AB^2 )\omega_M +AB =0
\end{equation*}
where $\ds A=e^{u}$, $\ds B= e^{\frac{a\xi}{M+a}}$ and $\ds \omega_{M} = e^{z_{M}\xi}$.
When $b=1$, we have the equation
$$  A\omega_\infty^2 - (A^2 -A + 1)\omega_\infty +A =0 $$
By subtracting two equations we get
\begin{align*}
&\quad A(\omega_M^2-\omega_\infty^2)-(A^2 -A +1)(\omega_M - \omega_\infty)  \\
&= -A(B^2-1)\omega_M^2 + (A^2(B-1) + (B+1)(B-1))  -A(B-1)\\
\end{align*}
This implies
\begin{align*}
 \omega_{M} -\omega_\infty
=& (B-1)\frac{-A(B+1)\omega_M^2 + (A^2+B+1) - A}{A(\omega_{M} + \omega_\infty) - (A^2 - A +1) }
\end{align*}
For simplicity, we denote the right hand side by $(B-1)K_{M}$. Note that $ K_{M} \xrightarrow{M \to \infty} K \neq 0$.
On the other hand,
\begin{align*}
\omega_{M}  -\omega_\infty
&=  e^{z_{M} \xi} - e^{z_\infty \xi} \\
&=  e^{z_\infty \xi} ( e^{(z_{M} - z_\infty) \xi} - 1 ) \\
&=  e^{z_\infty \xi} (z_{M} - z_\infty) \xi \left( \sum_{k=1}^{\infty} \frac{((z_{M} - z_\infty) \xi)^{k-1}}{k!}\right)
\end{align*}

As a result,
\begin{align*}
 z_{M} - z_\infty
&= (B-1)\frac{K_{M}}{\xi e^{z_\infty \xi}( \sum_{k=1}^{\infty} \frac{((z_{M} - z_\infty) \xi)^{k-1}}{k!})} \\
&=  \frac{a\xi}{M+a} \left(\sum_{k=1}^{\infty} \frac{[a\xi /(M+a)]^{k-1} }{k!}\right) \frac{K_{M}}{ \xi e^{z_\infty \xi}( \sum_{k=1}^{\infty} \frac{((z_{M} - z_\infty) \xi)^{k-1}}{k!})}\\
&=  \frac{L_{M}}{M+a},
\end{align*}
where $L_{M} \xrightarrow{M \to \infty} L < \infty$. Therefore, we have
\begin{align*}
\lim_{M \to \infty} (M+a)(\Phi_\infty(z_{M}) - \Phi_\infty(z_\infty))
=\lim_{M \to \infty} \frac{\Phi_\infty(z_\infty + \frac{L_{M}}{M+a}) - \Phi_\infty(z_\infty)}{\frac{L_{M}}{M+a}} L_{M}
=  0,
\end{align*}
where in the last equality we use the fact that $z_\infty$ is the solution of the saddle point equation 
$$\dfrac{d\Phi_\infty(z)}{dz} = 0$$
\end{proof}

\begin{proof}[Proof of Proposition~\ref{Rietoint}]
First of all, note that the error term $E$ can be expressed in the form
\begin{align*}
E&= \int_{a}^{b} h(x) \left| \exp \left( \left(N+\frac{1}{2}\right) f(x) \right) \right| dx \\
&\quad\quad\quad - \sum_{\substack{k = 1,\\\\ a \leq \frac{2k+1}{2N+1} \leq b}} \frac{1}{N+1/2} h\left(\frac{2k+1}{2N+1}\right) \left| \exp\left( \left(N + \frac{1}{2} \right) f \left(\frac{2k+1}{2N+1}\right) \right) \right| \\
&= \sum_{\substack{k = 1,\\\\ a \leq \frac{2k+1}{2N+1} \leq b}} E(k),
\end{align*}
where $E(k)$ is defined by
\begin{align*}
E(k)
&= \int_{a+\frac{2k-1}{2N+1}}^{a+\frac{2k+1}{2N+1}} \left(h(x) \left| \exp \left( \left(N+\frac{1}{2}\right) f(x) \right) \right|\right.\\
&  \left.\quad\quad\quad -h\left(\frac{2k+1}{2N+1}\right) \left| \exp\left( \left(N + \frac{1}{2} \right) f \left(\frac{2k+1}{2N+1}\right) \right) \right|  \right)dx
\end{align*}
For each $N$, let $a_N$ and $b_N$ be the least and largest integers such that
$$x_{crit} - \frac{1}{\left(N+1/2 \right)^{1/3}} \leq  a + \frac{2a_N+1}{2N+1} < a + \frac{2b_N+1}{2N+1} \leq x_{crit} + \frac{1}{\left(N+ 1/2\right)^{1/3}}$$
Note that $$ \left(a + \frac{2a_N+1}{2N+1}\right) -  \left(a + \frac{2b_N+1}{2N+1}\right) \leq \frac{2}{\left(N+1/2\right)^{1/3}}$$
Then the error term can be splited into two parts:
\begin{align*}
E= &\sum_{k = a_N}^{b_N} E(k)  +  \sum_{k < a_N} E(k)  +\sum_{k > b_N} E(k)
\end{align*}

By Laplace's method, we know that
\begin{align*}
\int_{a}^{b} h(x) \left| \exp \left( \left(N+\frac{1}{2}\right) f(x) \right) \right|dx
&\hspace{8pt}=\hspace{8pt} \int_{a}^{b}  h(x)\exp \left(\left(N+\frac{1}{2}\right) \text{Re}f(x)\right) dx \\
& \stackrel[N \to \infty]{\sim}{ } \frac{\sqrt{2\pi}h(x_{crit}) \exp((N+1/2)\text{Re}f(x_{crit}))}{\sqrt{\left(N+1/2\right)}\sqrt{-\text{Re}f''(x_{crit})}}
\end{align*}
For the second and the third sum of $E$, note that we have
$$ \left|x_{crit} - \frac{2k+1}{2N+1}\right| > \frac{1}{(N+1/2)^{1/3}}.$$
So for each such $k$,
\begin{align*}
& \left| \exp\left( \left(N+\frac{1}{2}\right) f \left(\frac{2k+1}{2N+1}\right) \right) \right| / \left| \exp \left( \left(N+\frac{1}{2}\right) f(x_{crit})\right) \right|  \\
= & \left| \exp \left(\left(N+\frac{1}{2}\right) (f(x_{crit}) + \frac{f''(x_{crit})}{2}(x_{crit} - \frac{2k+1}{2N+1})^2 + \dots) - \left(N+\frac{1}{2}\right) f(x_{crit})\right) \right| \\
\leq & \left| \exp \left( \left(N+\frac{1}{2}\right)^{1/3}\frac{f''(x_{crit})}{2} + O(1)\right) \right| \xrightarrow{N \to \infty} 0
\end{align*}

Therefore, the second and third sums decay exponentially when they are compared with the integral $\ds \int_{a}^{b} h(x) \left| \exp \left( \left(N+\frac{1}{2}\right) f(x) \right) \right|dx $.

For the first sum, by the Mean-Value Theorem, for each $i$ there exists some 
$$\ds \xi _k \in \left(a + \frac{2k-1}{2N+1}, a + \frac{2k+1}{2N+1}\right)$$
scuh that
\begin{align*}
&\quad \left| \left| \exp \left( \left(N+\frac{1}{2}\right) f(x) \right) \right| - \left| \exp \left( \left(N+\frac{1}{2}\right) f \left(\frac{2k+1}{2N+1} \right) \right) \right| \right| \\
&\leq \left| \left(N+\frac{1}{2}\right)(\text{Re}f)'(\xi_i) \right| \exp\left(\left(N+\frac{1}{2}\right) \Re f(\xi_i)\right) \left( \frac{1}{N+1/2} \right) \\
&= \left| (\text{Re}f)'(\xi_i) \right| \exp\left(\left(N+\frac{1}{2}\right) \Re f(\xi_i)\right)
\end{align*}

For $\ds x \in \left[\frac{2a_N+1}{2N+1},\frac{2b_N+1}{2N+1}\right]$, define a function $g(x)$ by
$$g(x) = \left| (\text{Re}f)'(x) \right| \exp\left(\left(N+\frac{1}{2}\right) \Re f(x)\right).$$
Note that
$g(x_{crit}) = 0$. By assumption, since $x_{crit}$ is the only critical point for $\Re f$ and $\Re f$ attains its maximum at $x_{crit}$, we have
$$g(x) =
\begin{cases}
(\text{Re}f)'(x) \exp\left(\left(N+\frac{1}{2}\right) \Re f(x)\right) & \text{if $x\in [\frac{2a_N+1}{2N+1},x_{crit}]$}\\
 - (\text{Re}f)'(x) \exp\left(\left(N+\frac{1}{2}\right) \Re f(x)\right) & \text{if $x\in [x_{crit},\frac{2b_N+1}{2N+1}]$}
\end{cases}$$

For $x\in [\frac{2a_N+1}{2N+1},x_{crit}]$, $g'(x)$ is given by
\begin{align*}
g'(x)
&=\left(N+\frac{1}{2}\right) \left((\text{Re}f)'(x)\right)^2 \exp\left(\left(N+\frac{1}{2}\right)\text{Re}f(x) \right) \\
&\quad + (\text{Re}f)''(x) \exp\left(\left(N+\frac{1}{2}\right)\text{Re}f(x)\right)
\end{align*}
Let $x_{max}$ be the maximum point of $g(x)$ on $[\frac{a_N}{N},x_{crit}]$. Note that $g'(x_{crit})<0$. We have two possibilities:
(1) $x_{max} \in  (\frac{a_N}{N},x_{crit})$ and (2) $x_{max} \notin  (\frac{a_N}{N},x_{crit})$.

\begin{enumerate}
\item[(1)]
if $x_{max} \in  (\frac{a_N}{N},x_{crit})$, we have $g'(x_{max})=0$, i.e.
$$\ds (\text{Re}f)'(x_{max}) = \sqrt{- \left(N+\frac{1}{2}\right)^{-1}(\text{Re}f)''(x_{max}) }$$
Thus,
\begin{align*}
g(x_{max})
&=\sqrt{- \left(N+\frac{1}{2}\right)^{-1}(\text{Re}f)''(x_{max}) } \exp(N\text{Re}f(x_{max}))\\
&\leq \sqrt{- \left(N+\frac{1}{2}\right)^{-1}(\text{Re}f)''(x_{max}) } \exp(N\text{Re}f(x_{crit}))
\end{align*}
and
\begin{align*}
&\quad \left| \left| \exp \left( \left(N+\frac{1}{2}\right) f(x) \right) \right| - \left| \exp \left( \left(N+\frac{1}{2}\right) f \left(\frac{2k+1}{2N+1} \right) \right) \right| \right| \\
&\leq \sqrt{- \left(N+\frac{1}{2}\right)^{-1}(\text{Re}f)''(x_{max}) } \exp(N\text{Re}f(x_{crit}))
\end{align*}

On the other hand, by Mean Value Theorem, for each $k$ there exists some 
$$ d _k \in \left(a + \frac{2k-1}{2N+1}, a + \frac{2k+1}{2N+1}\right)$$
such that
$$  \left|h(x) - h\left(\frac{2k+1}{2N+1}\right)\right| = | h'(d_i) | \left|x-\frac{2k+1}{2N+1}\right|$$
Let $y_1$ and $y_2$ be the maximum points of $h(x)$ and $h'(x)$ on $[a,b]$ respectively, and let $c_N$ be the largest integer such that $a + \frac{2c_N+1}{2N+1} \leq x_{crit}$. Altogether,

\begin{align*}
 \sum_{k=a_N }^{c_N} E(k) 
&\leq  \sum_{k=a_N }^{c_N}  \int_{a + \frac{2k-1}{2N+1}}^{a + \frac{2k+1}{2N+1}}  h\left (\frac{2k+1}{2N+1}\right) \\
&\qquad \times \left(\left| | \exp (\left(N+\frac{1}{2}\right) f(x))| - | \exp\left(\left(N+\frac{1}{2}\right) f \left(\frac{2k+1}{2N+1}\right)\right) \right|\right) dx \\
&\quad+ \sum_{k=a_N }^{c_N}  \int_{a + \frac{2k-1}{2N+1}}^{a + \frac{2k+1}{2N+1}}  \int_{\frac{i-1}{N}}^{\frac{i}{N}} \left| h(x)- h\left (\frac{2k+1}{2N+1}\right)\right| \left| \exp (\left(N+\frac{1}{2}\right) f(x))\right| dx\\
&\leq  h(y_1) \sum_{k=a_N }^{c_N}  \int_{a + \frac{2k-1}{2N+1}}^{a + \frac{2k+1}{2N+1}} \sqrt{- \left(N+\frac{1}{2}\right)^{-1}(\text{Re}f)''(x_{max}) } \\
&\qquad \times \exp\left(\left(N+\frac{1}{2}\right)\text{Re}f(x_{crit})\right) dx \\
&\quad+ h'(y_2) \sum_{k=a_N }^{c_N}  \int_{a + \frac{2k-1}{2N+1}}^{a + \frac{2k+1}{2N+1}}  \frac{1}{N+1/2} \left| \exp \left(\left(N+\frac{1}{2}\right) f(x)\right)\right| dx\\
&\leq  h(y_1)\sqrt{-\text{Re}f''(x_{max}) } \exp\left(\left(N+\frac{1}{2}\right)\text{Re}f(x_{crit})\right)  \left( \frac{2}{(N+1/2)^{5/6}}\right)\\
&\quad+ \frac{ h'(y_2) }{N+1/2} \sum_{k=a_N }^{c_N}  \int_{a + \frac{2k-1}{2N+1}}^{a + \frac{2k+1}{2N+1}}  \left| \exp \left(\left(N+\frac{1}{2}\right) f(x)\right)\right| dx\\
&\leq  \sqrt{-\text{Re}f''(x_{max}) }\exp\left(\left(N+\frac{1}{2}\right)\text{Re}f(x_{crit})\right)\left( \frac{2}{(N+1/2)^{5/6}}\right)\\
&\quad+ \frac{ h'(y_2) }{N+1/2}  \int_{a}^{b}  \left| \exp \left(\left(N+\frac{1}{2}\right) f(x)\right)\right| dx
\end{align*}

Since
\begin{align*}
\frac{\sqrt{-\text{Re}f''(x_{max}) }\exp\left(\left(N+\frac{1}{2}\right)\text{Re}f(x_{crit})\right)\left( \frac{2}{(N+1/2)^{5/6}}\right)
}{ \frac{\sqrt{2\pi}h(x_{crit}) \exp((N+1/2)\text{Re}f(x_{crit}))}{\sqrt{\left(N+1/2\right)}\sqrt{-\text{Re}f''(x_{crit})}}} \xrightarrow{N \to \infty} 0
\end{align*}
and
\begin{align*}
\frac{\frac{ h'(y_2) }{N+1/2}  \int_{a}^{b}  \left| \exp \left(\left(N+\frac{1}{2}\right) f(x)\right)\right| dx}{\frac{\sqrt{2\pi}h(x_{crit}) \exp((N+1/2)\text{Re}f(x_{crit}))}{\sqrt{\left(N+1/2\right)}\sqrt{-\text{Re}f''(x_{crit})}}} \xrightarrow{N \to \infty} 0,
\end{align*}
we have the desired result.\\

\item[(2)]
if $x_{max} \notin  (\frac{a_N}{N},x_{crit})$, since $g'(x_{crit}) < 0$, we have $x_{max} = a + \frac{2a_N+1}{2N+1}$. 

In particular, since $|x_{crit} - x_{max}| \geq \dfrac{1}{2N^{1/3}}$, we have
\begin{align*}
&\quad  g(x_{max}) / | \exp (N f(x_{crit}))|  \\
&\leq  |  N(\text{Re}f)'(x_{max})| \left| \exp \left( \left(N+\frac{1}{2}\right)^{1/3}\frac{f''(x_{crit})}{2} + O(1)\right) \right|\\
& \xrightarrow{N \to \infty} 0
\end{align*}
\end{enumerate}
Hence in this case the error term decay exponentially compared with the integral 
$$\ds \int_{a}^{b} \left| \exp \left( \left(N+ \frac{1}{2} \right) f(x)\right)\right| dx.$$

Similar method can be applied to the interval $\left[x_{crit}, a + \frac{2b_N}{2N+1}\right]$. This completes the proof.
\end{proof}

\begin{proof}[Proof of Lemma~\ref{lemmaFSA}]
We are going to construct the contour using the same idea as in the proof of Lemma 3.4 of \cite{HM13}. To do so, we only need to check that the conditions in the construction are also satisfied in our case.\\

Let $q_{M}(t)=z^{(s)}_{M}t$ for $0<t<\operatorname{Re}(1/z^{(s)}_{M})$. Since $\ds \lim_{s \to 1} \lim_{M \to \infty}z^{(s)}_{M}=z^{(1)}_\infty<1$ (see the proof of Lemma 3.4 of \cite{HM13}), $\operatorname{Re}(1/z^{(s)}_{M})>1$ for $s$ sufficiently close to 1 and $M$ sufficiently large.

Also, since $d^{2}\Phi^{1}_\infty (z^{(1)}_\infty)/dz^{2} \neq 0$ and $\ds \lim_{s \to 1} \lim_{M \to \infty} \Phi^{(s)}_{M} \to \Phi^{(1)}_\infty$, we have $$d^{2}\Phi^{(s)}_{M}(z^{(s)}_{M})/d z^{2} \neq 0$$
for $s$ sufficently close to 1 and $M$ sufficiently large.

By definition we have $d \Phi^{(s)}_{M}(z^{(s)}_{M}) /dz =0$. This implies
$$\operatorname{Re} d\Phi^{(s)}_{M}(q_{M}(1)) /dt=0$$
for any $M$. Since $\max \{\operatorname{Re}\Phi^{(1)}_\infty(z)\}$ takes place at $z=z^{(1)}_\infty$, we must have
$$\max \{\operatorname{Re}\Phi^{(s)}_{M}(z)\} = \operatorname{Re}\Phi^{(s)}_{M}(z^{(s)}_{M})$$
along the line $q_{M}(t)$ for $s$ sufficiently close to 1 and $M$ sufficiently large.\\

Moreover, from the proof of Lemma 3.4 of \cite{HM13} that the difference between the argument of $z^{(1)}_\infty$ and $1/\sqrt{-d^{2}\Phi^{(1)}_\infty(z^{(1)}_\infty)/dz^{2}}$ is strictly smaller than $\pi/4$. Hence the difference between the argument of $z^{(s)}_{M}$ and $1/\sqrt{-d^{2}\Phi^{(s)}_{M}(z^{(s)}_{M})/dz^{2}}$ is also strictly smaller than $\pi/4$ for $s$ sufficiently close to 1 and $M$ sufficiently large. As a result the same construction of the path $Q$ in the proof of Lemma 3.4 of \cite{HM13} still applies. \\

Finally we connect $z^{(s)}_{M}(\operatorname{Re}1/z^{(s)}_{M})$ and $1$ by a line segment $L$. Since from the proof of Lemma 3.4 in \cite{HM13} that $\operatorname{Re}\Phi^{(1)}_\infty (z)<0$ on the segment connecting $2\pi i/\xi$ and $1$, we also have $\operatorname{Re}\Phi^{(s)}_{M}(z)\leq 0$ on the segment $L$ for $s$ sufficently close to 1 and $M$ sufficiently large. This finishes the construction of the paths.
\end{proof}

\section{Acknowledgements}
We are very grateful to Hitoshi Murakami and Tian Yang for their valuable comments and suggestions to this work. We would also like to thank the referees of the Journal for the careful review which improves our paper a lot. Finally, the first author would like to thank the Mathematics department of the Chinese University of Hong Kong for their continuous support.

\vspace{10pt}
\begin{minipage}[t]{0.5\textwidth}
   Ka Ho Wong\\
   Department of Mathematics,\\
   Texas A\&M University,\\
   College Station, \\
   Texas, United States\\
   daydreamkaho@math.tamu.edu\\
\end{minipage}
\begin{minipage}[t]{0.5\textwidth}
  Thomas Kwok-Keung Au\\
   Department of Mathematics,\\
   The Chinese University of Hong Kong,\\
   Shatin,\\
   Hong Kong\\
   thomasau@math.cuhk.edu.hk
\end{minipage}   

\end{document}